\documentclass[a4paper,titlepage,11pt]{amsart}

\usepackage[foot]{amsaddr}
\usepackage[utf8]{inputenc}
\usepackage[T1]{fontenc}
\usepackage[english]{babel}
\usepackage{verbatim}
\usepackage[left=3.1cm,right=3.1cm,top=3.2cm,bottom=3.2cm]{geometry}
\usepackage{amsfonts,amssymb,amsthm,nccmath,mathtools}%\usepackage[intlimits]{amsmath}
\usepackage{braket}
\usepackage{enumerate}
\usepackage{hyperref}
\usepackage{cleveref}
\usepackage{bm}
\usepackage{tikz}\usetikzlibrary{cd}
\usepackage{adjustbox}
\usepackage{algorithm2e}

\newcommand\Round[1]{{\left(#1\right)}}

\newcommand\Z{\mathbb{Z}}

\newcommand\C{\mathbb{C}}

\newcommand\K{\mathbb{K}}
\newcommand\A{\mathbb{A}}

\newcommand\Def[1]{\textbf{{#1}}}

\newcommand\orb{\mathcal{O}}

\DeclareMathOperator{\GL}{GL}
\DeclareMathOperator{\Flag}{\textit{Fl}}
\DeclareMathOperator{\Gr}{Gr}
\DeclareMathOperator{\rep}{rep}

\DeclareMathOperator{\modCat}{mod}
\DeclareMathOperator{\op}{\bm{\cdot}}
\DeclareMathOperator{\im}{Im}
\DeclareMathOperator{\rk}{rk}
\DeclareMathOperator{\Hom}{Hom}
\DeclareMathOperator{\Ext}{Ext}

\DeclareMathOperator{\Aut}{Aut}
\DeclareMathOperator{\Stab}{Stab}
\DeclareMathOperator{\Mat}{Mat}
\DeclareMathOperator{\id}{id}
\DeclareMathOperator{\p}{p}

\DeclareMathOperator{\dimvec}{{\bf dim}}

\theoremstyle{plain}
\newtheorem{theorem}[equation]{Theorem}
\crefname{theorem}{\bf{Theorem}}{\bf{theorems}}
\newtheorem{lemma}[equation]{Lemma}
\crefname{lemma}{\bf{Lemma}}{\bf{lemmas}}
\newtheorem{proposition}[equation]{Proposition}
\crefname{proposition}{\bf{Proposition}}{\bf{propositions}}
\newtheorem{corollary}[equation]{Corollary}
\crefname{corollary}{\bf{Corollary}}{\bf{corollaries}}
\theoremstyle{remark}
\newtheorem{remark}[equation]{Remark}
\crefname{remark}{\textit{Remark}}{\textit{remarks}}
\theoremstyle{definition}
\newtheorem{definition}[equation]{Definition}
\crefname{definition}{\bf{Definition}}{\bf{definitions}}
\newtheorem{example}[equation]{Example}
\crefname{example}{\bf{Example}}{\bf{examples}}
\newtheorem{conjecture}[equation]{Conjecture}
\crefname{conjecture}{\bf{Conjecture}}{\bf{conjectures}}
\newtheorem*{problem*}{Future perspectives}
\crefname{problem}{\bf{Problem}}{\bf{problems}}

\theoremstyle{plain}
\newtheorem{maintheorem}{Theorem}
\crefname{maintheorem}{\bf{Theorem}}{\bf{theorems}}

\theoremstyle{plain}
\newtheorem{mainconjecture}{Conjecture}
\crefname{mainconjecture}{\bf{Conjecture}}{\bf{conjectures}}

\definecolor{forestgreen}{rgb}{0.13, 0.55, 0.13}

\keywords{Schubert varieties, Quiver representations, Quiver Grassmannians, Linear degenerations}
\subjclass{16G20, 14M15, 14N20, 14D06.}

\title[Linear degenerations of Schubert varieties]{Linear degenerations of Schubert varieties}

\numberwithin{equation}{section}

\DontPrintSemicolon
\SetKwInOut{Input}{input}
\SetKwInOut{Output}{output}
\SetArgSty{textrm}

\SetFuncSty{MyFuncSty}
\SetAlCapNameSty{MyFuncSty}

\SetKwSty{MyKwSty}

\SetCommentSty{MyCommentSty}
\SetKwInOut{Reference}{reference~}

\makeatletter
\newcount\my@repeat@count
\newcommand{\myrepeat}[2]{%
  \begingroup
  \my@repeat@count=\z@
  \@whilenum\my@repeat@count<#1\do{#2\advance\my@repeat@count\@ne}%
  \endgroup
}
\makeatother
\begin{document}

\author{Giulia Iezzi}
\address{Algebra, Geometry and Computer Algebra Group, RPTU University Kaiserslautern-Landau, Gottlieb-Daimler-Straße, Building 48, 67663 Kaiserslautern, Germany}
\email{giulia.iezzi@math.rptu.de}

\begin{abstract}

We define linear degenerations of Schubert varieties via a special class of quiver Grassmannians. To do so, we restrict our study to an appropriate subvariety in the variety of representations of the considered quiver and describe a base change action on this subvariety. We provide two explicit parametrisations for the orbits of this action, one of which encodes the partial order relations on such orbits.

\end{abstract}
{\let\newpage\relax\maketitle}

\section*{Acknowledgments}

The author would like to thank Ghislain Fourier and Martina Lanini for the helpful discussions and their guidance during this project.
Further thanks to Xin Fang, Evgeny Feigin and Markus Reineke for insightful discussions.
This work is a contribution to the SFB-TRR 195 “Symbolic Tools in Mathematics and their Applications” of the German Research Foundation (DFG) and is based on Chapters 2 and 5 of the author's PhD thesis \cite{mythesis}.

\section{Introduction}

Given a quiver $Q$ and a $Q$-representation $M$, the quiver Grassmannian $\Gr_{\bf e}(M)$ is the projective variety parametrising subrepresentations $N\subseteq M$ of dimension vector $\mathbf{e}$. Quiver Grassmannians first appeared in \cite{crawle-boevey1989quiver, schofield1992quiver} and have since been extensively studied, for instance as a tool in cluster algebra theory \cite{caldero2006cluster} or for studying linear degenerations of the flag variety \cite{fourier2020lineardegenerations,cerulliirelli2012quiveranddegenerate,feigin2010grassmanndegenerations, feigin2013frobeniussplittin}.
A key advantage of using quivers for studying problems of geometrical nature is the possibility to exploit combinatorial and algebraic tools to deduce geometrical properties of the considered projective variety.
A few examples of this can be found in results by Cerulli Irelli, Esposito, Franzen, Feigin and Reineke, for instance in \cite{cerulli2017schubert,irelli2021cell,irelli2013desingularization}.
However, as proven by Reineke in \cite{reineke2013projectiveisquiver} and, more generally, by Ringel in \cite{ringel2018quiver}, every projective variety arises as the quiver Grassmannian of any wild acyclic quiver.
%This implies that the study of quiver Grassmannians needs to be restricted in order to be meaningful, for instance by considering particular quivers or quiver representations.

The investigation of linear degenerations of flag varieties using quiver Grassmannians took place in this framework in the last fifteen years, appearing in several papers by Feigin, Finkelberg, Cerulli Irelli, Reineke, Fang, Fourier \cite{feigin2010grassmanndegenerations, cerulliirelli2012quiveranddegenerate,feigin2013frobeniussplittin,cerulli2017linear,fourier2020lineardegenerations}.
The word ``degeneration'' refers to a construction that allows us to regard a variety as a specific element in a family of varieties, or as a chosen fibre of a certain morphism; the above-cited works characterise several geometric and combinatorial aspects of linear degenerations of flag varieties, such as their defining equations, cellular decompositions and - making use of rank tuples - flatness, irreducibility and normality.

Schubert varieties first appeared at the end of the 19$^{\text{th}}$ century in the context of Schubert calculus, whose purpose is to determine the number of solutions of certain intersection problems, and have become some of the best understood examples of complex projective varieties.
They have recently been linked to degenerate flag varieties and quiver Grassmannians, for example in \cite{irelli2014degenerate}, where the authors show that any type A or C degenerate flag variety in the sense of \cite{feigin2010grassmanndegenerations} is isomorphic to a Schubert variety in an appropriate partial flag manifold, or later in \cite{cerulli2017schubert}, which proves that some Schubert varieties arise as irreducible components of certain quiver Grassmannians.
In \cite{iezzi2025quiver}, we considered a class of quiver Grassmannians which, depending on specific choices of the dimension vector, provide a realisation of smooth Schubert varieties or of Bott-Samelson resolutions of Schubert varieties in type A.

In this paper, we exploit the results obtained in \cite{iezzi2025quiver} to define linear degenerations of Schubert varieties via quiver Grassmannians.
To do so, we consider a specific quiver with relations $(\Gamma,I)$ and study the subvariety $R^{\iota}_{\bf d}$ in the variety of all representations of $(\Gamma,I)$.
Each representation in $R^{\iota}_{\bf d}$ is determined by a tuple of linear maps in $\prod_{j=1}^{n-1} U_{n+1}$, where $U_{n+1}$ is the subset of $\Mat_{n+1}$ (the set of square matrices of size $n+1$) consisting of upper-triangular matrices.
One notable property of this subvariety is the uniformity of the decompositions of any of its representations. We define a class of indecomposables of $(\Gamma,I)$, denoted by $U^{(h_1,\dots,h_n)}$, and prove the following:

\begin{maintheorem}[Theorem \ref{thm:indecompU}]\label{thm:1}
    All representations in $R^{\iota}_{\bf d}$ can be decomposed as direct sums of the indecomposable $(\Gamma,I)$-representations $U^{(h_1,\dots,h_n)}$.
\end{maintheorem}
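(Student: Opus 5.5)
The plan is to understand $(\Gamma,I)$ through the setup of \cite{iezzi2025quiver}, and then reduce to the representation theory of a linearly oriented type $A$ quiver. From the description of $R^{\iota}_{\bf d}$, each vertex carries $V=\C^{n+1}$ with its standard flag, and each of the $n-1$ arrows acts by an upper-triangular matrix. The inclusions $\iota$ of the flag are part of the data: this is what forces the maps to be upper-triangular. I therefore expect a representation in $R^{\iota}_{\bf d}$ to be a grid of vector spaces $M_{i,j}$, where $i$ indexes the flag step ($\dim M_{i,j}=i$) and $j$ indexes the copy. The maps $\iota$ are injective in the flag direction, and the given maps $A_j$ run in the other direction, with all squares commuting (this is the ideal $I$).

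The first step is to apply a change of basis at each vertex that preserves the flag, that is, an action of the upper-triangular Borel $B$ at every copy, so that the tuple $(A_1,\dots,A_{n-1})$ is brought into a normal form. Concretely, I would show by induction on the size $n+1$ that we can simultaneously reach the following situation: there is a basis $e_1,\dots,e_{n+1}$, adapted to the flag in every copy, such that each $A_j$ sends every basis vector either to another basis vector of lower or equal index, or to zero. This is a partial-permutation form, in the spirit of the Bruhat decomposition $B\backslash U_{n+1}/B$, whose orbits are parametrised by rook placements.

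The induction step goes as follows. Look at the last basis vector $e_{n+1}$ and follow its images $A_1e_{n+1}$, $A_2A_1e_{n+1}$, and so on along the chain of copies. Using column operations in $B$, choose each image as a new flag-adapted basis vector, namely the one of largest index where a nonzero coordinate occurs. Then use the upper-triangular row and column operations of the next copies to clear the other entries, as in Gaussian elimination for the Bruhat normal form.

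Once this normal form holds, every basis vector generates a "string" across the copies and flag steps. The string is determined by recording, for each copy $j$, the flag level $h_j$ at which the vector first appears. The span of such a string, together with the parts of the flag it generates, is a subrepresentation. I would identify it with $U^{(h_1,\dots,h_n)}$, where the $h_j$ are the index of the vector in each copy, weakly decreasing or possibly with a cutoff where a map kills it. Since the basis is adapted to every flag and each $A_j$ maps basis vectors to basis vectors or zero, $M$ splits as the direct sum of these strings. Checking that each summand really is $U^{(h)}$ as defined in the paper amounts to matching dimension vectors and maps.

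The main obstacle is the simultaneous normal form. A single upper-triangular matrix reduces by $B\times B$ to a partial permutation, but here consecutive maps share the base change at the middle copy. So I must verify that the reductions are compatible, that is, that after fixing $A_1$ the stabiliser still acts transitively enough to bring $A_2$ into the desired form. I would handle this by processing the copies from left to right and each matrix column by column from the last index. At each step I would use only operations in the stabiliser of the previous normal forms; these contain the Borel subgroup of the torus-and-unipotent elements that fix the chosen pivot basis vectors.

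As a fallback, I would argue abstractly. $(\Gamma,I)$ restricted to $R^{\iota}_{\bf d}$ is equivalent to representations of a grid-shaped poset in which flag maps are injective. One would then show that the relevant indecomposables are exactly thin modules with connected support, namely the $U^{(h)}$, and apply Krull–Schmidt.
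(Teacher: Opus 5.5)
\textbf{The statement is false, so the gap cannot be closed.} The step you flag as the main obstacle, a common partial-permutation form for all $f_{n+1}^j$ in flag-adapted bases, does not exist in general. This is not a missing argument. Such a form is exactly what a decomposition into $U^{(h)}$'s amounts to, because an isomorphism between two objects of $R^{\iota}_{\bf d}$ must commute with the standard inclusions, and is therefore given by $G^{\iota}_{\bf d}$.

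Already for $n+1=4$, take $f_4^1=\begin{bsmallmatrix}0&0&0&0\\0&1&0&0\\0&0&0&0\\0&0&0&0\end{bsmallmatrix}$ and $f_4^2=\begin{bsmallmatrix}1&-1&0&0\\0&0&0&0\\0&0&1&0\\0&0&0&1\end{bsmallmatrix}$. In the middle column, $\im f_4^1=\langle b_2\rangle$ and $\ker f_4^2=\langle b_1+b_2\rangle$ are distinct lines in $\C^2\setminus\C^1$. Suppose $b'_1,\dots,b'_4$, with $b'_k\in\C^k\setminus\C^{k-1}$, were a middle basis in which both maps are partial permutations. Then the image of $f_4^1$ is spanned by a single $b'_k$, which forces $\langle b'_2\rangle=\langle b_2\rangle$. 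Likewise the kernel of $f_4^2$ is spanned by a single $b'_m$, which forces $\langle b'_2\rangle=\langle b_1+b_2\rangle$. So once $f^1$ is normalised, no element of its stabiliser can bring $f^2$ into partial-permutation form.

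Concretely, put $N_{i,j}=\C^i\cap W_j$ with $W_1=\langle b_2\rangle$, $W_2=\langle b_1,b_2\rangle$, $W_3=\langle b_1\rangle$. This $N$ is a direct summand of $M^f$, with complement $U^{(4,0,0)}\oplus U^{(2,0,0)}\oplus U^{(1,0,0)}\oplus U^{(0,2,2)}\oplus U^{(0,1,1)}\oplus U^{(0,0,3)}$. An endomorphism of $N$ consists of a scalar on column $1$, a scalar on column $3$, and a map on $\langle b_1,b_2\rangle$ preserving $\langle b_1\rangle$. Commuting with $b_2\mapsto b_2\mapsto -b_1$ and $b_1\mapsto b_1$ forces all of this to be a single scalar, so $\mathrm{End}(N)=\C$. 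Hence $N$ is indecomposable with $\dim N_{i,2}=2$ for $i\geq 2$. By Krull--Schmidt, $M^f$ is not a direct sum of the thin modules $U^{(h_1,h_2,h_3)}$. The same $N$ refutes your fallback that all relevant indecomposables are thin with connected support.

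The paper's proof has the same hole in another form. It decomposes each row into interval modules of $\mathbb{A}_n$ and asserts that these decompositions fit together along the vertical inclusions. In row $2$ of the example, the interval decomposition is forced to split column $2$ as $\langle b_2\rangle\oplus\langle b_1+b_2\rangle$. The row-$1$ subspace $\langle b_1\rangle$ meets both summands trivially, so no compatible choice across rows exists.

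What your method does prove is the single-map case $n+1=3$, which is the $B\times B$ reduction of one upper-triangular matrix to a partial permutation. Beyond that, it proves only the tautological version: $M^f$ decomposes into $U^{(h)}$'s exactly when $f$ admits a common partial-permutation form. Any correct general statement has to allow non-thin indecomposables such as $N$.
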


Then, we answer the question of the representation type of $(\Gamma, I)$ - that is, whether there are finitely many isomorphism classes of indecomposable representations of $(\Gamma,I)$- when we restrict to the representations in $R^{\iota}_{\bf d}$ and to certain isomorphisms.
In order to do so, we consider the orbits in $R^{\iota}_{\bf d}$ under the action defined as
\begin{equation*}
    h \op M^f= (h_2 f_{n+1}^1 h_1^{-1}, h_3 f_{n+1}^2 h_2^{-1}, \dots, h_n f_{n+1}^{n-1} h_{n-1}^{-1}),
\end{equation*}
for some $h \in \prod_{j=1}^{n-1} B_{n+1}$, where $B_{n+1}$ is the Borel subgroup of invertible upper-triangular matrices inside the general linear group, and $M^f\in R^{\iota}_{\bf d}$.
To simplify notation, we refer to this action as $B$-action and to such orbits as $B$-orbits or, equivalently, as $B$-isomorphism classes in $R^{\iota}_{\bf d}$.
Notice, however, that the group acting on $R^{\iota}_{\bf d}$ is the product $\prod_{j=1}^{n-1} B_{n+1}$, acting via base change as given above.
In \cite{abeasis1985degenerations}, Abeasis and Del Fra parametrise the isomorphism classes of the representations of any quiver of type $\mathbb{A}_n$.
We employ an analogous parametrisation, denoted by ${\bf r}$, and make use of our previous results to prove the following:

\begin{maintheorem}[Theorem \ref{thm:firstparam}]\label{thm:2}
    Two representations $M^f, M^{g}$ in $R^{\iota}_{\bf d}$ are in the same $B$-isomorphism class if and only if ${\bf r}^f={\bf r}^{g}$.
\end{maintheorem}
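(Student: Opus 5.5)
The strategy follows Abeasis--Del Fra: the tuple ${\bf r}^f$ is a list of ranks of compositions of the maps $f^j_{n+1}$, and every entry of it is a $B$-invariant. The extra ingredient is that Theorem~\ref{thm:indecompU} lets the decomposition into the indecomposables $U^{(h_1,\dots,h_n)}$ be realised by a $B$-change of basis. I take ${\bf r}^f$ to record, for $1\le i\le j\le n-1$ and for each upper-left (or lower-right) block determined by the standard flag, the ranks of the restricted compositions $f^{j}_{n+1}\cdots f^{i}_{n+1}$. These are the invariants that detect the "truncated" shape of each $U^{(h_1,\dots,h_n)}$.

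\emph{Necessity.} Suppose $g=h\op f$ with $h\in\prod B_{n+1}$. Then every composition transforms as $g^{j}\cdots g^{i}=h_{j+1}(f^{j}\cdots f^{i})h_i^{-1}$. Since $h_i,h_{j+1}$ are upper triangular, they preserve the standard flag $\langle e_1,\dots,e_k\rangle$. Hence the ranks of the composed maps restricted to flag pieces, and of the induced maps to quotients by flag pieces, are unchanged. This gives ${\bf r}^f={\bf r}^g$. This step is routine.

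\emph{Sufficiency.} First I refine Theorem~\ref{thm:indecompU}. I go back into its proof, which I expect is an inductive Gaussian-elimination argument column by column along the arrows. The aim is to check that the base changes used can be chosen upper triangular. The result would be that every $M^f$ is $B$-equivalent to a normal form $\bigoplus U^{(h_1,\dots,h_n)}$, in which each summand is placed on specific standard basis vectors: each $f^j_{n+1}$ becomes a partial permutation matrix compatible with upper-triangularity. Next I show that the multiplicities of the summand types in this normal form, together with their positions relative to the flag, are determined by ${\bf r}^f$. For this I compute ${\bf r}$ on each $U^{(h_1,\dots,h_n)}$ and use additivity of ranks over direct sums. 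I then invert the resulting linear system by an inclusion--exclusion (Möbius-type) formula, as in the type $\A$ case. Two representations with equal ${\bf r}$ then have the same normal form and so lie in the same $B$-orbit.

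\emph{Main obstacle.} The hard part is the refinement of Theorem~\ref{thm:indecompU}: producing a normal form reachable by the Borel groups rather than by full $\GL$. Without this, equality of ${\bf r}$ only gives $\GL$-isomorphism. My first attempt is an induction on $n$, peeling off the last map $f^{n-1}_{n+1}$. At each stage I clear entries using only row operations that add lower-index rows to higher ones and column operations in the opposite direction, both of which are upper triangular. I would check that the pivots chosen this way are exactly the positions recorded by the rank data. If that fails, I fall back to a dimension count: show that each fibre of ${\bf r}$ is irreducible and equal to a single orbit closure minus lower strata.
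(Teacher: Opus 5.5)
Your necessity argument is correct, and more direct than the paper's appeal to Abeasis--Del Fra. The gap is the step you yourself call the main obstacle, and it cannot be filled.

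First, the $B$-versus-$\GL$ issue is not where the difficulty lies. An isomorphism $\phi$ of $(\Gamma,I)$-representations between two points of $R^{\iota}_{\bf d}$ commutes with the fixed inclusions $\iota_{i+1,i}$. So every $\phi_{n+1,j}$ preserves the standard flag, and $\phi_{i,j}$ is its upper-left $i\times i$ block. Hence $(\phi_{n+1,1},\dots,\phi_{n+1,n})\in G^{\iota}_{\bf d}$; the paper uses this tacitly. Your $B$-reachable normal form is therefore equivalent to Theorem~\ref{thm:indecompU} itself, i.e.\ to every $f$ being $B$-equivalent to a tuple of upper-triangular partial permutation matrices.

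This already fails for $n+1=4$. With $E_{ab}$ the matrix units, take $f=(E_{22},\,E_{11}-E_{12})$. In column $2$ the three lines $\langle e_1\rangle$, $\im f^1=\langle e_2\rangle$ and $\ker f^2\cap\langle e_1,e_2\rangle=\langle e_1+e_2\rangle$ are pairwise distinct. The element $h_2$ fixes the first and transports the other two by one injective map. For partial permutations, both of those lines would be coordinate lines of $\langle e_1,e_2\rangle$ different from $\langle e_1\rangle$, i.e.\ both equal to $\langle e_2\rangle$, which is impossible. So $M^f$ is not a direct sum of the $U^{(h_1,\dots,h_n)}$, and no upper-triangular elimination reaches your normal form. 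It does work for $n+1=3$, where there is a single matrix.

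The fallback cannot work either, because the statement itself fails once $n+1\ge 5$. In $U_5$ put $f^1=E_{33}$, $f^2=E_{22}+E_{13}+E_{23}$, $f^3_t=E_{11}-tE_{12}$, and write $f_t=(f^1,f^2,f^3_t)$. Then $f^2f^1=E_{13}+E_{23}$, $f^3_tf^2=-tE_{12}+(1-t)E_{13}$ and $f^3_tf^2f^1=(1-t)E_{13}$. Let $f^J_{[a,b]}$ denote the block of a composition $f^J$ on rows and columns $a,\dots,b$. For upper-triangular $f^J$ one has $\dim\bigl(f^J(\C^i)\cap\C^m\bigr)=\rk f^J_{[1,i]}-\rk f^J_{[m+1,i]}$. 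All these block ranks are independent of $t\in\C\setminus\{0,1\}$, so ${\bf r}^{f_t}$ (and likewise ${\bf s}^{f_t}$) is constant in $t$.

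Yet in column $3$ consider the lines $f^2\langle e_1,e_2\rangle=\langle e_2\rangle$, $f^2f^1\langle e_1,e_2,e_3\rangle=\langle e_1+e_2\rangle$ and $\ker f^3_t\cap\langle e_1,e_2\rangle=\langle te_1+e_2\rangle$. These are moved only by $h_3$. On the lines $\langle xe_1+e_2\rangle$ of $\langle e_1,e_2\rangle$, $h_3$ acts by an affine map of the coordinate $x$. Fixing $x=0$ and $x=1$ forces the identity, so $t$ is an orbit invariant and the $f_t$ lie in pairwise distinct orbits.

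The paper's proof has the same hole. It rests on Theorem~\ref{thm:indecompU} and Lemma~\ref{lemma:indipvectors}, and the row-by-row interval argument for Theorem~\ref{thm:indecompU} never checks that the decompositions of different rows are compatible with the vertical inclusions.

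A smaller point: your paraphrase of ${\bf r}$ via upper-left and lower-right block ranks is too weak even for $n+1=3$. The matrices $E_{13}+E_{22}$ and $E_{12}+E_{23}$ agree on those ranks but lie in different orbits. The paper's ${\bf r}$ amounts to the ranks of all diagonal interval blocks $f^J_{[a,b]}$.
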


We then provide an alternative parametrisation for such $B$-isomorphism classes. We notice that our $B$-action can be regarded as an expansion of the one considered in \cite{miller2005matrix} by Miller and Sturmfels in the context of matrix Schubert varieties.
There, the orbits are described in terms of ranks of certain submatrices, namely the north-west ranks. We adapt this parametrisation to our $B$-action by defining the south-west arrays, denoted by ${\bf s}$, and showing the following:

\begin{maintheorem}[Theorem \ref{thm:secondparam}]\label{thm:3}
    Two representations $M^f, M^{g}$ in $R^{\iota}_{\bf d}$ are in the same $B$-isomorphism class if and only if ${\bf s}^f={\bf s}^{g}$.
\end{maintheorem}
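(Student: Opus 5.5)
Since the south-west arrays ${\bf s}$ are defined only later in the paper, I work with the natural reading suggested by the Miller--Sturmfels analogy. For $M^f=(f^1_{n+1},\dots,f^{n-1}_{n+1})$, the array ${\bf s}^f$ records, for each $i\le j$ and each pair of indices $(p,q)$, the rank of the south-west submatrix (rows $\ge p$, columns $\le q$) of the composite $f^{j}_{n+1}\cdots f^{i}_{n+1}$, with $i=j$ giving the single maps. The plan is not to classify orbits from scratch. Instead I will use Theorem~\ref{thm:firstparam}, which says that $B$-orbits are exactly the level sets of ${\bf r}$. The target statement then reduces to proving that ${\bf s}^f$ and ${\bf r}^f$ determine each other.

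\emph{Step 1: ${\bf s}$ is a $B$-invariant.} Under the action, the composite $f^{j}_{n+1}\cdots f^{i}_{n+1}$ is replaced by $h_{j+1}\,(f^{j}_{n+1}\cdots f^{i}_{n+1})\,h_i^{-1}$, since the intermediate $h$'s cancel. Multiplying on the left by an invertible upper-triangular matrix replaces each row by a combination of itself and the rows below it. Multiplying on the right by an invertible upper-triangular matrix replaces each column by a combination of itself and the columns to its left. Hence the span of rows $\ge p$ restricted to columns $\le q$ is preserved up to invertible transformations, and all south-west ranks are unchanged. This gives the direction ``same orbit $\Rightarrow$ ${\bf s}^f={\bf s}^g$''.

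\emph{Step 2: ${\bf s}$ determines ${\bf r}$.} For the converse, by Theorem~\ref{thm:indecompU} I may replace $M^f$ by an isomorphic direct sum of indecomposables $U^{(h_1,\dots,h_n)}$. I will try to choose a $B$-normal form in which every map is a partial permutation matrix compatible with the upper-triangular shape. This is the analogue of the Miller--Sturmfels normal form, and I expect it is essentially what the proof of Theorem~\ref{thm:firstparam} produces. In such a normal form each summand $U^{(h_1,\dots,h_n)}$ corresponds to a chain of matrix entries $1$, passing through positions determined by the $h_k$. The rank of a south-west submatrix of a composite then counts exactly the chains that pass through the maps $i,\dots,j$ and whose start and end indices lie in the prescribed ranges.

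This counting gives ${\bf s}$ as an explicit sum of the multiplicities of the summands $U^{(h)}$. Second differences of the ${\bf s}$-entries in $p$ and $q$, followed by inclusion--exclusion over the interval $[i,j]$, recover each multiplicity. This is the standard Möbius inversion turning rank arrays into multiplicities of interval modules. Since ${\bf r}$ is built from ranks of composites in the Abeasis--Del Fra style, it is in turn determined by these multiplicities. Then ${\bf s}^f={\bf s}^g$ implies ${\bf r}^f={\bf r}^g$, and Theorem~\ref{thm:firstparam} places $M^f$ and $M^g$ in the same orbit.

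\emph{Main obstacle.} The hard part is Step 2, specifically ensuring that the normal form is reachable by the triangular group $\prod B_{n+1}$ and not merely by $\prod\GL_{n+1}$. Isomorphism in the sense of Theorem~\ref{thm:indecompU} may only be a $\GL$-isomorphism, and positions of pivots matter for south-west ranks. My first attempt will be to run a column- and row-reduction sweep along the chain of maps, map by map. For each map, I eliminate entries using only rightward column operations and upward row operations, clearing from the south-west corner first. When one map is modified, its neighbouring maps are re-adjusted in the compensating way, which keeps the action consistent along the chain.

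Once pivots are isolated, the south-west counting formula should be a direct check. The inversion of rank counts into multiplicities is routine bookkeeping.
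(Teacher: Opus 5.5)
Your Step 1 is correct; it is a hands-on version of the paper's appeal to Miller--Sturmfels. Your overall architecture is also the paper's: invariance, then ${\bf s}$ determines ${\bf r}$, then Theorem~\ref{thm:firstparam}. Step 2, however, has a genuine gap, and it cannot be closed.

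The obstacle you single out is not the real one. Suppose $M^f\cong\bigoplus U^{(h)}$ as $(\Gamma,I)$-representations. The vertical maps of $M^f$ are the standard inclusions, so the basis of each bottom-row $\C^{n+1}$ coming from the summands is automatically adapted to the standard flag. Theorem~\ref{thm:indecompU} would therefore hand you the partial-permutation normal form for free. The problem is that this normal form, equivalently that decomposition, does not exist in general.

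Take $n+1=4$, $f^1=E_{23}+E_{33}$ and $f^2=E_{11}+E_{22}+E_{23}+E_{34}$, where $E_{ab}$ are the matrix units. By your Step 1, a $B\times B$-class contains at most one partial permutation matrix, since such a matrix is determined by its south-west ranks. The classes of $f^1$ and $f^2$ contain $E_{33}$ and $E_{11}+E_{22}+E_{34}$ respectively. So a simultaneous normal form would have to be $P=E_{33}$ and $Q=E_{11}+E_{22}+E_{34}$, and then $QP=0$. But $QP=h_3f^2f^1h_1^{-1}$ must have the rank of $f^2f^1=2E_{23}$, which is $1$. So no sweep along the chain can succeed.

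More seriously, the implication you are trying to prove is false already for $n+1=5$. So neither your Step 2 nor the reduction to Theorem~\ref{thm:firstparam} can yield ($\Leftarrow$); the paper's proof of that theorem rests on Theorem~\ref{thm:indecompU}.

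For $\lambda\in\C\setminus\{0,1\}$ let $f^1=E_{35}+E_{44}$, $f^2=E_{24}+E_{33}-E_{34}$ and $f^3_\lambda=E_{12}+(1-\lambda)E_{22}+E_{23}$. The composites are $f^2f^1=E_{24}-E_{34}+E_{35}$, $f^3_\lambda f^2=E_{14}+E_{23}-\lambda E_{24}$ and $f^3_\lambda f^2f^1=E_{14}-\lambda E_{24}+E_{25}$. All south-west ranks of all six composites are independent of $\lambda$.

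Now look at the lines $F_1$, $f^3_\lambda(F_2)$, $f^3_\lambda f^2(F_3)$ and $f^3_\lambda f^2f^1(F_4)$ in the last column. They are $\langle e_1\rangle$, $\langle e_1+(1-\lambda)e_2\rangle$, $\langle e_2\rangle$ and $\langle e_1-\lambda e_2\rangle$, all inside $F_2$. Under $h$, each composite $X$ becomes $h_4Xh_j^{-1}$, and $h_j^{-1}$ preserves the flag. Hence $h_4$ carries these four lines of $M_\lambda$ to the corresponding lines of $h\cdot M_\lambda$. Their cross-ratio $(\lambda-1)/\lambda$ is injective in $\lambda$, so infinitely many $B$-orbits share one south-west array.

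What survives is Step 1, together with the fact that ${\bf s}$ determines ${\bf r}$. That fact needs no normal form. Each entry of ${\bf r}$ has the form $\dim(X(F_q)\cap F_k)$ for a composite $X$, because the row-$i$ maps are leading principal blocks. This dimension equals $\rk$ of the first $q$ columns of $X$ minus $\rk$ of their part below row $k$.
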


Finally, we recall the construction of linear degenerations of flag varieties following \cite{cerulli2017linear}: given an equioriented quiver of type $\mathbb{A}_n$, the linear degeneration of $\Flag_{n+1}$, denoted by $\Flag^{f}_{n+1}$, is defined as the quiver Grassmannian consisting of all subrepresentations of the following representation:
\begin{equation*}
\begin{tikzcd}[]
\overset{\C^{n+1}}{\bullet} \ar[r, "{f_1}"] & \overset{\C^{n+1}}{\bullet} \ar[r, "{f_2}"] & ... \ar[r, "{f_{n-1}}"] & \overset{\C^{n+1}}{\bullet}
\end{tikzcd}
\end{equation*}
for the dimension vector $(1,2,\dots,n)$. The quiver Grassmannian corresponding to the choice of $f$ can be regarded as the fibre of the projection $\pi: Y\to R$ over $f$, where $R$ is the variety $\Mat_{n+1}^{n-1}$ and $Y$ is the universal quiver Grassmannian, i.e. the variety of compatible pairs of sequences of maps and sequences of subspaces.
In particular, we recall a result about the flat locus of $\pi$, namely that it is the union of all orbits that degenerate to the orbit parametrised by a certain rank tuple.

We build upon the constructions and results obtained in Section \ref{sec:parametrisations} and in \cite{iezzi2025quiver} and define the linear degeneration of the Schubert variety $X_w$, denoted by $X^f_w$, as the fibre of $\pi: Y\to R^{\iota}_{\bf d}$ over $f$, where $R^{\iota}_{\bf d}$ is the subvariety of $(\Gamma,I)$-representations considered in Section \ref{sec:subvarietyR} and $Y$ is the universal quiver Grassmannian.
First, we show the following:
\begin{maintheorem}[Corollary \ref{cor:orbitdegen}]\label{thm:4}
    The orbit $\orb^{\iota}_{M^{g}}$ of $M^{g}$ under the $B$-action of $G^{\iota}_{\bf d}$ lies in the closure of the orbit $\orb^{\iota}_{M^f}$ of $M^f$ (with respect to the Zariski topology on $\Mat_{n+1}$) if and only if ${\bf s}^f\leq {\bf s}^{g}$, where the "less than or equal to" relation is intended componentwise on the south-west arrays. In this case, we write $\orb^{\iota}_{M^{g}} \subseteq \overline{\orb}^{\iota}_{M^f}$ and we say that $\orb^{\iota}_{M^{f}}$ degenerates to $\orb^{\iota}_{M^{g}}$. 
\end{maintheorem}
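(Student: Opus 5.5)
The plan is to reduce the orbit closure question in $R^{\iota}_{\bf d}$ to the corresponding statement for matrix Schubert varieties in the sense of Miller and Sturmfels. There, $B\times B$-orbit closures in $\Mat_{n+1}$ are cut out by rank conditions on submatrices, and closure containment is equivalent to a componentwise comparison of the rank arrays. Two facts from the earlier sections frame the argument:
\begin{itemize}
\item By \cref{thm:secondparam}, the $B$-orbits in $R^{\iota}_{\bf d}$ are in bijection with the south-west arrays ${\bf s}$.
\item Each entry of ${\bf s}^f$ is the rank of a south-west submatrix of one of the maps $f^j_{n+1}$, or of a composition of consecutive maps. I expect these to be the quantities that are invariant under the action $h\op M^f$.
\end{itemize}

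First, the easy direction. For each entry $s$ of the south-west array, I would consider the locus
\[
Z = \{M^g\in R^{\iota}_{\bf d} : \text{the corresponding entry of } {\bf s}^g \text{ is at most } c\}.
\]
This locus is Zariski closed, since it is defined by the vanishing of the $(c+1)$-minors of the relevant south-west submatrix, which is polynomial in the entries of the $f^j$. It is also $B$-stable, because multiplying by invertible upper-triangular matrices on both sides preserves the rank of south-west corner submatrices. Hence $\overline{\orb}^{\iota}_{M^f}$ is contained in the intersection of these loci with $c$ equal to the entries of ${\bf s}^f$.

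The orientation of the inequality needs care at this point. Ranks can only drop under specialisation, so if the array records ranks, then lying in the closure gives ${\bf s}^g\le{\bf s}^f$. The statement instead says ${\bf s}^f\le{\bf s}^g$, so I would check the convention for ${\bf s}$. Presumably it records coranks, or dimensions such as $i-\rk$, which makes the inequality reverse. I would rewrite the closed conditions accordingly, and this gives the implication from $\orb^{\iota}_{M^g}\subseteq\overline{\orb}^{\iota}_{M^f}$ to ${\bf s}^f\le{\bf s}^g$.

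For the converse, I want to show that the closed set
\[
\overline{Z}_{{\bf s}^f}=\{M^g : {\bf s}^f\le {\bf s}^g\}
\]
equals $\overline{\orb}^{\iota}_{M^f}$. I would use the decomposition from \cref{thm:indecompU} to write each orbit representative in a normal form: a direct sum of the indecomposables $U^{(h_1,\dots,h_n)}$, which corresponds to a tuple of partial permutation-like upper-triangular matrices. The approach is to prove a covering statement. Whenever ${\bf s}^f\le{\bf s}^g$ with ${\bf s}^f\neq{\bf s}^g$, there should be an elementary degeneration: a one-parameter family $M_t$ with $M_t\in\orb^{\iota}_{M^{f'}}$ for $t\neq 0$ and $M_0\in\orb^{\iota}_{M^g}$, where ${\bf s}^f\le{\bf s}^{f'}<{\bf s}^g$. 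Such a family can be built by adding $t$ times an elementary matrix to a single $f^j$ in the normal form, for instance merging or splitting two indecomposable summands. Induction on $\sum({\bf s}^g-{\bf s}^f)$ then gives the converse.

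I expect this elementary-degeneration step to be the main obstacle. One has to check two things. The added entry must stay in the upper-triangular region, which is required to remain in $R^{\iota}_{\bf d}$. The resulting change of ${\bf s}$ must be monotone and must actually approach ${\bf s}^g$. The second point is where the interaction between consecutive maps $f^j,f^{j+1}$ (the quiver relations defining $R^{\iota}_{\bf d}$) complicates the purely matrix-Schubert argument.

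If this direct construction becomes unwieldy, I would try a fallback that imports the Miller–Sturmfels result componentwise. The idea is to identify $R^{\iota}_{\bf d}$ with a product of spaces of upper-triangular matrices on which $\prod B_{n+1}$ acts. The closure of a product of orbits is then the product of closures, provided the south-west arrays decouple across $j$. In that case the rank-array characterisation of orbit closures applies factor by factor.
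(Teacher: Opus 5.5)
\textbf{The orientation.} Your closed-condition argument for the easy direction is correct. Definition~\ref{def:southwestparam} defines ${\bf s}$ by \emph{ranks} of lower-left submatrices, not coranks. So what your argument proves is that $\orb^{\iota}_{M^{g}}\subseteq\overline{\orb}^{\iota}_{M^f}$ implies ${\bf s}^{g}\le{\bf s}^{f}$. The inequality in the statement is simply reversed; the paper's own Example~\ref{ex:ordering} uses ${\bf s}^{g}\le{\bf s}^{f}$. As written, the statement already fails for $n+1=3$ with $f=0$, $g=\id$. You should correct the statement rather than reinterpret ${\bf s}$.

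\textbf{The converse.} This is the genuine gap. Your elementary-degeneration step is not carried out, and the fallback cannot work. Since $h_j$ acts on both $f^{j-1}_{n+1}$ and $f^{j}_{n+1}$, a $G^{\iota}_{\bf d}$-orbit is not a product of $B_{n+1}\times B_{n+1}$-orbits. Its closure is therefore not a product of matrix Schubert varieties. Moreover, ${\bf s}$ contains ranks of composites, which no factorwise statement controls. This fallback is essentially the paper's own argument, which simply derives the corollary from \cite[Lemma 15.19, Theorem 15.31]{miller2005matrix}. That derivation is valid only for a single matrix ($n+1=3$). There, reversing the order of the rows turns the $B_{n+1}\times B_{n+1}$-action with south-west ranks into the $B^-_{n+1}\times B_{n+1}$-action with north-west ranks, and $U_{n+1}$ is closed and stable.

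\textbf{Failure for $n+1\ge 5$.} Here the converse cannot be rescued by any argument. The bijection you take from Theorem~\ref{thm:secondparam} and the normal form you take from Theorem~\ref{thm:indecompU} both fail there, since finitely many thin summands would give finitely many orbits. Write $E_{pq}$ for matrix units and set
$f^1_{n+1}=E_{33}+aE_{23}$, $f^2_{n+1}=E_{22}+E_{13}$, $f^3_{n+1}=E_{11}+cE_{12}$, with all remaining maps zero.
In the plane $P=\langle b_1,b_2\rangle$ at vertex $(n+1,3)$, consider the four lines
$\langle b_1\rangle$, $f^2_{n+1}(\langle b_1,b_2\rangle)=\langle b_2\rangle$, $f^2_{n+1}f^1_{n+1}(\langle b_1,b_2,b_3\rangle)=\langle b_1+ab_2\rangle$ and $\ker f^3_{n+1}\cap P=\langle b_2-cb_1\rangle$.
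These are defined intrinsically from the representation, so every $B$-isomorphism carries them to the corresponding lines. Hence their cross-ratio, a non-constant function of $ac$, is an orbit invariant.

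Yet for $ac\notin\{0,-1\}$ no south-west rank depends on $(a,c)$. The three maps visibly have constant arrays, and the composites are $E_{13}+aE_{23}$, $E_{13}+cE_{12}$ and $(1+ac)E_{13}$. Orbits of the connected group $G^{\iota}_{\bf d}$ are open in their irreducible closures. So the corollary, in either orientation, would force two orbits with equal arrays to coincide, and it is false for $n+1\ge5$.

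A correct write-up must restrict to $n+1=3$, where your easy direction plus Miller--Sturmfels gives both implications. The case $n+1=4$ is settled neither by your plan nor by the paper.
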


To conclude the discussion on the south-west parametrisation, we list the conditions that are necessary and sufficient for a tuple of non-negative integers to be the south-west array of some representation in $ R^{\iota}_{\bf d}$.

Lastly, we consider the natural question of determining the flat locus of the projection $\pi: Y\to R^{\iota}_{\bf d}$. We present a potential strategy and, together with some motivations, our conjecture:

\begin{mainconjecture}[Conjecture \ref{conj:flatlocus}]\label{conj}
    A tuple $f\in R^{\iota}_{\bf d}$ is in the flat locus of $\pi : Y\to R^{\iota}_{\bf d}$ if and only if $\dim(\Gr_{{\bf e}^w}(M^f))=\dim(X_w)$.
\end{mainconjecture}

The paper is organised as follows: in Section \ref{sec:backgroundquivers}, we recall basic facts about quiver representations and quiver Grassmannians. Section \ref{sec:qGandschubvar} is dedicated to Schubert varieties and to recalling their realisation and desingularisation via quiver Grassmannians obtained in \cite{iezzi2025quiver}. Section \ref{sec:subvarietyR} contains the main results of this paper: we consider the variety of representations $R^{\iota}_{\bf d}$ and prove Theorems \ref{thm:1}, \ref{thm:2} and \ref{thm:3}. In Section \ref{sec:lindeg}, we define linear degenerations of Schubert varieties and prove Theorem \ref{thm:4}. Lastly, in Section \ref{sec:flatlocus}, we formulate Conjecture \ref{conj} and present its motivation.

\section{Background on quiver representations}\label{sec:backgroundquivers}

We first collect a few facts about quiver representations and quiver Grassmannians. Some standard references are \cite{crawley1992lectures,sc2014quiver}.

\begin{definition}
    A finite \Def{quiver} $Q=(Q_0,Q_1,s,t)$ is given by a finite set of vertices $Q_0$, a finite set of arrows $Q_1$ and two maps $s,t: Q_1\to Q_0$ assigning to each arrow its source, resp. target.
 \end{definition}

 \begin{definition}
    A \Def{relation} on a quiver $Q$ is a subspace of the path algebra of $Q$ spanned by linear combinations of paths with common source and target, of length at least 2.
    Given a two-sided ideal $I$ of $\K Q$ generated by relations, the pair $(Q,I)$ is a \Def{quiver with relations} and the quotient algebra $\K Q/I$ is the path algebra of $(Q,I)$.
\end{definition}

A system of relations for $I$ is defined as a subset $R$ of $\cup_{i,j\in Q_0} iIj$, where $i$ denotes the trivial path on vertex $i$, such that $R$, but no proper subset of $R$, generates $I$ as a two-sided ideal. For any two vertices $i$ and $j$, we denote by $r(i,j,R)$ the cardinality of the set $R\cap iIj$, which contains those elements in $R$ that are linear combinations of paths starting in $i$ and ending in $j$. If $Q$ contains no oriented cycle, then the numbers $r(i,j,R)$ are independent of the chosen system of relations (see for instance \cite{bongartz1983algebras}), and can therefore be denoted by $r(i,j)$.

 \begin{definition}
     Given a quiver $Q$, the finite-dimensional $Q$-\Def{representation} $M$ over an algebraically closed field $\K$ is the ordered pair $((M_i)_{i\in Q_0},(M^{\alpha})_{\alpha\in Q_1})$, where $M_i$ is a finite-dimensional $\K$-vector space attached to vertex $i\in Q_0$ and $M^{\alpha}:M_{s(\alpha)}\to M_{t(\alpha)}$ is a $\K$-linear map for any $\alpha \in Q_1$.
     The \Def{dimension vector} of $M$ is $\dimvec M \coloneqq(\dim_{\K}M_i)_{i\in Q_0} \in \Z^{|Q_0|}_{\geq 0}$.
     A \Def{subrepresentation} of $M$, denoted by $N=((N_i)_{i\in Q_0},(M^{\alpha}\restriction_{N_{s(\alpha)}})_{\alpha\in Q_1})$, is a $Q$-representation such that $N_i\subseteq M_i $ for any $ i\in Q_0$ and $M^{\alpha}(N_{s(\alpha)})\subseteq N_{t(\alpha)}$ for any $\alpha\in Q_1$.
     \end{definition}

From now on, we consider only bound quivers in the sense of Schiffler (see \cite[Definition 5.1]{sc2014quiver}). 
The finite-dimensional $Q$-representations over $\K$ form a category, denoted by $\rep_{\K}(Q)$, where a morphism $\phi$ between $M$ and $M'$ in $\rep_{\K}(Q)$ is given by linear maps $\phi_i:M_i\to M'_i \quad \forall i \in Q_0$ such that $\phi_{t(\alpha)} \circ M^{\alpha}=M'^{\alpha}\circ \phi_{s(\alpha)}$.
Similarly, the category $\rep_{\K}(Q,I)$ consists of the finite-dimensional representations of $Q$ that satisfy the relations in $I$.
It is known (see \cite[Theorem 5.4]{sc2014quiver} for a proof) that $\rep_{\K}(Q)$ is equivalent to the category $A$-$\modCat$ of finite-dimensional modules over the path algebra $A=\K Q$ of $Q$. 
Furthermore, $\rep_{\K}(Q)$ is Krull-Schmidt (\cite[Theorem 1.2]{sc2014quiver}) and hereditary (\cite[Theorem 2.24]{sc2014quiver}).

\begin{definition}\label{def:projinj}
    A representation $P\in \rep_{\K}(Q)$ is called \Def{projective} if the functor $\Hom(P,-)$ maps surjective morphisms to surjective morphisms. Dually, $I\in \rep_{\K}(Q)$ is called \Def{injective} if the functor $\Hom(-,I)$ maps injective morphisms to injective morphisms.
\end{definition}

If $Q$ is a quiver without oriented cycles, then to each vertex $i\in Q_0$ corresponds exactly one indecomposable projective representation, denoted by $P(i)$. Such projective representations are easy to describe: the basis of the vector space $P(i)_k$ at vertex $k$ is given by the set of all possible paths from vertex $i$ to vertex $k$, and the actions of the maps between the vector spaces are induced by the concatenation of paths.
Dually, for every vertex $i\in Q_0$ there is exactly one indecomposable injective representation $I(i)$, whose basis for each vector space $I(i)_k$ is given by the set of all possible paths from vertex $k$ to vertex $i$ and whose maps act by concatenation of paths.

The following result holds in any additive category.
\begin{proposition}\cite[Proposition 2.7]{sc2014quiver}\label{prop:sumsofprojinj}
    Let $P,P',I$ and $I'$ be representations of $Q$. Then:
    \begin{enumerate}
        \item $P\oplus P'$ is projective $\iff$ $P$ and $P'$ are projective;
        \item $I\oplus I'$ is injective $\iff$ $I$ and $I'$ are injective.
    \end{enumerate}
\end{proposition}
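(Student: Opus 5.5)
We have to prove that a direct sum of representations is projective (resp. injective) if and only if each summand is. The plan is to work directly from Definition \ref{def:projinj}, using only the additive structure: the canonical inclusions $\iota_P\colon P\to P\oplus P'$, $\iota_{P'}\colon P'\to P\oplus P'$ and projections $\pi_P$, $\pi_{P'}$, which satisfy $\pi_P\iota_P=\id_P$, $\pi_{P'}\iota_{P'}=\id_{P'}$ and $\iota_P\pi_P+\iota_{P'}\pi_{P'}=\id_{P\oplus P'}$. The key observation is that for any representation $X$ these maps induce a natural isomorphism
\[
\Hom(P\oplus P',X)\;\cong\;\Hom(P,X)\oplus\Hom(P',X),\qquad \phi\mapsto(\phi\iota_P,\phi\iota_{P'}),
\]
with inverse $(\alpha,\beta)\mapsto \alpha\pi_P+\beta\pi_{P'}$. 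This isomorphism is compatible with postcomposition by any morphism $g\colon X\to Y$.

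For part (1), fix a surjective morphism $g\colon X\to Y$. Under the natural isomorphism above, the map $\Hom(P\oplus P',g)$ is identified with $\Hom(P,g)\oplus\Hom(P',g)$. A direct sum of two linear maps is surjective if and only if both components are surjective. Running this over all surjections $g$ gives both implications of (1) at once.

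To make the argument concrete rather than appealing to abstract identification, one can argue each direction separately. Suppose $P,P'$ are projective and $\psi\colon P\oplus P'\to Y$ is given. Lift $\psi\iota_P$ to some $\alpha$ and $\psi\iota_{P'}$ to some $\beta$; then $\alpha\pi_P+\beta\pi_{P'}$ lifts $\psi$. Conversely, suppose $P\oplus P'$ is projective and $\chi\colon P\to Y$ is given. Lift $\chi\pi_P$ to some $\tilde\phi$; then $\tilde\phi\iota_P$ lifts $\chi$, since $g\tilde\phi\iota_P=\chi\pi_P\iota_P=\chi$. The same argument applies to $P'$.

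Part (2) is the dual statement. Using $\Hom(X,I\oplus I')\cong\Hom(X,I)\oplus\Hom(X,I')$ via $\phi\mapsto(\pi_I\phi,\pi_{I'}\phi)$, precomposition with an injective morphism $g\colon X\to Y$ splits as a direct sum in the same way. Reading "injective" in Definition \ref{def:projinj} as meaning that $\Hom(g,I)$ is onto, i.e.\ every map $X\to I$ extends along $g$, the extension arguments dualise the lifting arguments verbatim. There is no real obstacle here. The only point requiring care is checking that the identification is natural in $X$, which follows from associativity of composition and bilinearity of composition of morphisms.
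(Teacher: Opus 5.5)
Your proof is correct and is the standard argument. The paper gives no proof of its own: it cites \cite[Proposition 2.7]{sc2014quiver} and remarks that the result holds in any additive category, which is exactly what your splitting $\Hom(P\oplus P',-)\cong\Hom(P,-)\oplus\Hom(P',-)$ (and its dual) uses. Reading injectivity of $I$ as surjectivity of $\Hom(g,I)$ for injective $g$ is the right correction of the misprint in Definition \ref{def:projinj}; in any case the splitting argument works under either reading, since a direct sum of linear maps is injective, resp.\ surjective, if and only if both summands are.
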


More background and details can be found for instance in \cite[Section 2.2]{bongartz1983algebras} or \cite{derksen2002semi}. To simplify notation, we will sometimes denote a $Q$-representation $M$ by its tuple of vector spaces $(M_i)_{i \in Q_0}$ when the assignment of the linear maps is clear from context.

\begin{definition}
    Consider a quiver $Q$, a $Q$-representation $M$ and a dimension vector ${\bf e}\in\Z^{Q_0}_{\geq 0}$ such that $e_i\leq \dim M_i \; \forall i\in Q_0$. The \Def{quiver Grassmannian} $\Gr_{\bf e}(M)$ parametrises the subrepresentations $N$ of $M$ with $\dim N_i=e_i $ for all $ i\in Q_0$.
\end{definition}

    Analogous to Grassmannians and flag varieties, non-empty quiver Grassmannians can be realised as closed subvarieties of products of Grassmannians, via the closed embedding
    \begin{equation*}
        \iota: \Gr_{\bf e}(M) \to \prod_{i \in Q_0}\Gr(e_i,M_i)
    \end{equation*}
    which sends a subrepresentation $N$ of $M$ to the collection of $e_i$-dimensional subspaces $N_i$ of $M_i$.
    %The relations defining (pointwise) the subvariety associated to a given quiver representation and a dimension vector are given in \cite{lorscheid2019pluckerelations}.

\begin{example}[The flag variety of type A]\label{ex: flag variety}

    In \cite[Proposition 2.7]{cerulliirelli2012quiveranddegenerate}, the authors realise the (linear degenerate) flag variety of type A as quiver Grassmannians associated to certain representations of the equioriented quiver of type $\mathbb{A}_n$. In particular, the complete flag variety $\Flag_{n+1}$ in $\C^{n+1}$ can be realised as follows.

    Consider the quiver with $n$ vertices, labelled from 1 to $n$, and $n-1$ arrows of the form $i\to i+1$. We fix the  dimension vector ${\bf e}=(1,2,\dots,n)$ and the representation $M$ with $M_i=\C^{n+1}$ for $i=1,\dots,n$ and $M^{\alpha}=\id$ for all arrows $\alpha$:
\begin{equation*}
\begin{tikzcd}[]
\overset{\C^{n+1}}{\bullet} \ar[r, "\id"] & \overset{\C^{n+1}}{\bullet} \ar[r, "\id"] & ... \ar[r, "\id"] & \overset{\C^{n+1}}{\bullet}.\\
\end{tikzcd}
\vspace{-0.7cm}
\end{equation*}
 The quiver Grassmannian $\Gr_{\bf e}(M)$ consists precisely of the subrepresentations $N$ of $M$ with $\dim(N_i)=i$, i.e. full flags of vector subspaces.
\end{example}

\subsection{The variety of $Q$-representations}

In Section \ref{sec:backgroundquivers}, we defined a quiver representation $M \in \rep_{\K}(Q)$ as the assignment of a finite-dimensional $\K$-vector space $M_i$ to each vertex in $Q_0$ and of a linear map $M_{\alpha}$ to each arrow in $Q_1$. By choosing bases, we can identify the vector spaces $M_i$ with $\K^{d_i}$, where $d_i=\dim M_i$, and consequently we can represent each linear map $M_{\alpha}$, for $\alpha: i \to j$, with a $d_j \times d_i$ matrix. This means that, given a quiver $Q$ and a dimension vector $\bf d$, determining a $Q$-representation of dimension vector $\bf d$ is equivalent to giving the linear maps $M_{\alpha}$.

\begin{definition}\label{def:repspace}
    The \Def{representation space} of the quiver $Q$ for the dimension vector $\bf d$ is
    \begin{equation*}
        R_{\bf d} \cong \bigoplus_{\alpha : i\to j} \Hom(\K^{d_i}, \K^{d_j}) \cong \bigoplus_{\alpha : i\to j} \Mat_{d_j \times d_i}(\K).
    \end{equation*}
\end{definition}
The representation space $R_{\bf d}$ is isomorphic by definition to the affine space $\A^r$, with $r\coloneqq \sum_{\alpha: i\to j} d_i \times d_j$. We consider then the group
\begin{equation*}
    G_{\bf d}=\prod_{i\in Q_0} \GL_{d_i}(\K),
\end{equation*}

where $\GL_{d_i}(\K)$ denotes the group of invertible $d_i\times d_i$ matrices, and define the action of $G_{\bf d}$ on $R_{\bf d}$ as
\begin{equation}\label{eq:Gaction}
    g \op M= g_{t(\alpha)} M_{\alpha} g^{-1}_{s(\alpha)}
\end{equation}
for some $g \coloneqq (g_i)_{i\in Q_0} \in G_{\bf d}$ and $M\in R_{\bf d}$. We write $\orb_M$ for the orbit of a representation $M$ under this action, that is $\orb_M=\{g \op M | g\in G_{\bf d} \}$.

\begin{example}\cite[Example 8.1]{schiffler2014quiver}
    Consider the quiver $Q=\begin{tikzcd}[]
\overset{1}{\bullet} \ar[r, "\alpha"] & \overset{2}{\bullet}
\end{tikzcd}$ and a dimension vector ${\bf d}=(d_1, d_2)$. In this case, the representation space $R_{\bf d}$ is isomorphic to $\Mat_{d_2 \times d_1}(\K)$, the elements $g=(g_1,g_2)$ of $G_{\bf d}$ are pairs of invertible matrices of size $d_1$ and $d_2$ respectively, and the orbits of the action defined in \ref{eq:Gaction} are $\orb_M=\{g_2 M_{\alpha} g_1^{-1} | (g_1,g_2)\in G_{\bf d} \}$. In other words, $\orb_M$ is the set of all matrices whose rank is equal to the rank of $M_\alpha$. 
\end{example}

\begin{lemma}\cite[Lemma 8.1]{schiffler2014quiver}\label{lemma:isomGL}
    The orbit $\orb_M$ is precisely the isomorphism class of the representation $M$, that is
    \begin{equation*}
        \orb_M=\{M'\in \rep_{\K}(Q) | M'\cong M \}.
    \end{equation*}
\end{lemma}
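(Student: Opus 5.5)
We prove both inclusions directly from the definitions, identifying representations of dimension vector ${\bf d}$ with points of $R_{\bf d}$ once bases $\K^{d_i}$ are fixed at each vertex. The key observation is that an element $g=(g_i)_{i\in Q_0}\in G_{\bf d}$ acting on $M$ is the same data as a tuple of invertible linear maps intertwining $M$ and $g\op M$. That is, $g$ is an isomorphism of representations $M\to g\op M$.

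For the inclusion $\orb_M\subseteq\{M'\mid M'\cong M\}$, we take $M'=g\op M$. For every arrow $\alpha$ we have $M'_\alpha=g_{t(\alpha)}M_\alpha g_{s(\alpha)}^{-1}$, hence
\begin{equation*}
g_{t(\alpha)}\circ M_\alpha = M'_\alpha\circ g_{s(\alpha)}.
\end{equation*}
So the family $(g_i)_{i\in Q_0}$ is a morphism $M\to M'$ in $\rep_{\K}(Q)$. Its inverse $(g_i^{-1})_{i\in Q_0}$ satisfies the analogous commutation relation, obtained by multiplying the display on the left by $g_{t(\alpha)}^{-1}$ and on the right by $g_{s(\alpha)}^{-1}$. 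Hence $M'\cong M$.

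For the reverse inclusion, we take $M'\in R_{\bf d}$ with an isomorphism $\phi\colon M\to M'$. Each $\phi_i\colon \K^{d_i}\to\K^{d_i}$ is an invertible linear map, since $\phi$ is an isomorphism in the representation category and the inverse morphism gives componentwise inverses. Thus $g\coloneqq(\phi_i)_{i\in Q_0}\in G_{\bf d}$. The morphism condition $\phi_{t(\alpha)}M_\alpha=M'_\alpha\phi_{s(\alpha)}$ rearranges to $M'_\alpha=\phi_{t(\alpha)}M_\alpha\phi_{s(\alpha)}^{-1}$, i.e.\ $M'=g\op M$.

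The only point requiring care is the implicit convention. Isomorphic representations with different underlying vector spaces must first be transported to the fixed spaces $\K^{d_i}$ by choosing bases. This is harmless, because any change of basis is itself an element of $G_{\bf d}$ and therefore does not leave the orbit. Beyond this bookkeeping, the argument is a direct unwinding of definitions, so there is no real obstacle.
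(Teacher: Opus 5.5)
Your proof is correct and matches the standard argument behind the cited result. The paper gives no proof of its own and only cites Schiffler, whose proof is the same unwinding of definitions: an element $g\in G_{\bf d}$ with $M'=g\op M$ is exactly an isomorphism $M\to M'$. Your remark about transporting abstract representations to $R_{\bf d}$ by choosing bases correctly handles the implicit convention that the right-hand side means representations isomorphic to $M$, viewed as points of $R_{\bf d}$.
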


Similarly, the stabiliser $\Stab(M)=\{g\in G_{\bf d} \; | \; g\op M = M \}$ of a $Q$-representation $M$ is isomorphic to the automorphism group $\Aut(M)$ of $M$.

\begin{lemma}\cite[Lemma 8.2]{schiffler2014quiver}
    Let ${\bf d}\in \Z^n$. Then
    \begin{enumerate}
        \item for any representation $M$ of dimension vector ${\bf d}$, the dimensions of the varieties $\orb_M,G_{\bf d}$ and $\Aut(M)$ satisfy
        \begin{equation*}
            \dim \orb_M = \dim G_{\bf d} - \dim \Aut(M);
        \end{equation*}
        \item there is one orbit of codimension zero in $R_{\bf d}$.
    \end{enumerate}
\end{lemma}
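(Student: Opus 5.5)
We prove the two parts separately: part (1) from the orbit--stabiliser theorem applied to the algebraic action of $G_{\bf d}$ on $R_{\bf d}$, and part (2) from irreducibility of the affine space $R_{\bf d}$ together with the finiteness of the set of isomorphism classes under consideration.

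For (1), consider the orbit map $\mu_M\colon G_{\bf d}\to R_{\bf d}$, $g\mapsto g\op M$. It is a morphism of varieties whose image is $\orb_M$, a constructible set and in fact locally closed, since orbits of algebraic group actions are open in their closures. Its fibres are cosets $g\Stab(M)$. The paragraph preceding the lemma identifies $\Stab(M)$ with $\Aut(M)$. Indeed, $g\op M=M$ means $g_{t(\alpha)}M_\alpha=M_\alpha g_{s(\alpha)}$ for all $\alpha$, which says exactly that $(g_i)_{i\in Q_0}$ is an invertible morphism $M\to M$. Thus $\Aut(M)$ is an open subset of the linear space $\operatorname{End}(M)$, hence irreducible of dimension $\dim\operatorname{End}(M)$.

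The fibre dimension theorem, applied to the dominant morphism $G_{\bf d}\to\overline{\orb_M}$ with all fibres of dimension $\dim\Aut(M)$, then gives $\dim G_{\bf d}=\dim\orb_M+\dim\Aut(M)$. Since $G_{\bf d}$ is irreducible, so is $\orb_M$, and no issue with components arises. Alternatively, one can argue via the homogeneous space: $\orb_M\cong G_{\bf d}/\Stab(M)$, using smoothness and separability in characteristic zero. That route also gives the formula directly.

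For (2), the main point is to use finiteness of orbits. I expect the intended setting is a Dynkin quiver, or more generally a representation space with finitely many orbits, as in the type $\mathbb A$ situation of the paper. By Gabriel's theorem there are finitely many indecomposables, so Krull--Schmidt gives finitely many isomorphism classes of dimension vector $\bf d$. By Lemma \ref{lemma:isomGL}, $R_{\bf d}$ is then a finite union of orbits. Since $R_{\bf d}\cong\A^r$ is irreducible, some orbit $\orb_M$ must be dense, so $\overline{\orb_M}=R_{\bf d}$. Because orbits are open in their closures, this orbit is open and of codimension zero.

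Uniqueness also follows from irreducibility. Two distinct orbits, each open and dense, would be disjoint nonempty open subsets of an irreducible space, which is impossible. Moreover, every other orbit lies in the proper closed complement, so its dimension is strictly smaller.

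The main obstacle is the implicit hypothesis in (2). For a general quiver, for instance the Kronecker quiver, there is no orbit of codimension zero, so the statement must be read under the finite-type assumption of the cited source. I would make this assumption explicit and then run the irreducibility argument above. In (1), the only delicate step is ensuring the fibre dimension theorem applies, which the irreducibility of $G_{\bf d}$ and of $\Aut(M)$ handles.
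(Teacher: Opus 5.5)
Your proof is correct, and it is the standard argument behind the citation; the paper gives no proof of its own, only the reference to Schiffler. The orbit map $G_{\bf d}\to\orb_M$ has fibres the cosets of $\Stab(M)\cong\Aut(M)$, so the fibre dimension theorem gives (1), and (2) rests on the irreducibility of the affine space $R_{\bf d}$. Your caveat on (2) is justified: for the Kronecker quiver with ${\bf d}=(1,1)$ the scalars act trivially, so no orbit has codimension zero. The form of (2) that holds for every quiver is that there is \emph{at most one} orbit of codimension zero. That is exactly your uniqueness paragraph, which uses no finite-type hypothesis; only existence needs finitely many orbits (e.g.\ a Dynkin quiver).
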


\section{Schubert varieties as quiver Grassmannians}\label{sec:qGandschubvar}

Given any $v_1,\dots,v_r$ in $\C^{n+1}$, we denote by $\langle v_1,\dots, v_r \rangle$ their $\C$-linear span. To define Schubert varieties in $\Flag_{n+1}$, we first fix a basis $\mathcal{B}=\set{b_1, b_2,\dots, b_{n+1}}$ of $\C^{n+1}$ and denote by $F_{\bullet}$ the standard flag $\langle b_1\rangle \subseteq  \langle b_1, b_2\rangle \subseteq \dots \subseteq \langle b_1, b_2,\dots, b_{n+1} \rangle$ and by $S_{n+1}$ the symmetric group on $n+1$ elements. More facts and details about Schubert varieties can be found for instance in \cite[Part III]{fulton1997young}.

\begin{definition}\label{def:schubcell}
For $w \in S_{n+1}$, the \Def{Schubert cell} $X_w^\circ$ is
\begin{equation*}
X^\circ_w = \set{V_{\bullet}\in \Flag_{n+1}: \dim(F_p\cap V_q)= \#\set{k\leq q : w(k)\leq p},\; 1 \leq p, q \leq n+1}.
\end{equation*}
\end{definition}

\begin{definition}\label{def:schubvar}
The \Def{Schubert variety} $X_w$ is defined as the closure in $\Flag_{n+1}$ of the cell $X_w^\circ$, that is
\begin{equation*}
X_w = \set{V_{\bullet}\in \Flag_{n+1}: \dim(F_p\cap V_q)\geq \#\set{k\leq q : w(k)\leq p}, \;1 \leq p, q \leq n+1}.
\end{equation*}
\end{definition}

We represent a permutation $w$ in $S_n$ by listing its (naturally) ordered images, that is, its one-line notation $w=[w(1)w(2)\dots w(n)]$.

%This is because we want to build a correspondance between the $d_{p,q}$ and the entries of a matrix, where the row index $p$ corresponds to the subspace $V_p$ and $q$ to $F_q$.

\begin{example}
For $e=[1\,2\dots n+1]$ and $w_0= [n+1 \, n\dots 1]$ in $S_{n+1}$, it is easy to compute from Definition \ref{def:schubvar} the Schubert varieties of minimal and maximal dimension, respectively $X_e = \{ F_{\bullet} \}$ and $X_{w_0}=\Flag_{n+1}$.
\end{example}

%\subsection{Schubert varieties and smoothness}

A combinatorial characterisation of smooth Schubert varieties was provided in \cite{lakshmibai1990criterion}: a Schubert variety $X_w$ is smooth if and only if $w$ avoids the patterns $[4231]$ and $[3412]$. We recall that a permutation $w=[w(1)w(2)\dots w(n)]$ avoids a pattern $\pi$ if no subsequence of $w$ has the same relative order as the entries of $\pi$.
In \cite[Theorem 1.1]{gasharov2002cohomology}, the authors prove that this pattern-avoiding condition is equivalent to $X_w$ being defined by non-crossing inclusions:

\begin{definition}(\cite[Section 1]{gasharov2002cohomology})\label{def:inclusions}
A Schubert variety $X_w$ is \Def{defined by inclusions} if the defining conditions on each $V_q$ (see Definition \ref{def:schubvar}) are a conjunction of conditions of the form $V_q \subseteq F_p$ and $V_q \supseteq F_{s}$, for some $p$ and $s$. A pair of conditions $V_{q}\subset F_{p}$ and $F_{p'}\subset V_{q'} $ is \Def{crossing} if $q<q'$ and $p>p'$.
If $X_w$ is defined by inclusions and its conditions do not contain any crossing pair, then $X_w$ is \Def{defined by non-crossing inclusions}.
\end{definition}

\begin{comment}
\begin{example}
All permutations in $S_3$ are defined by non-crossing inclusions.

In $S_5$, the permutation $w=[31542]$ avoids both patterns $[4231]$ and $[3412]$, which means that $X_{w}$ is defined by non-crossing inclusions. We can compute these inclusions using Definition \ref{def:schubvar}: a flag $V_{\bullet}$ is in $X_{w}$ if and only if
\begin{equation*}
    V_1 \subseteq F_3,\; F_1 \subseteq V_2\subseteq F_3,\; F_1\subseteq V_3,\; F_1\subseteq V_4 .
\end{equation*}
The same conditions can be described without redundancy as $F_1 \subseteq V_2\subseteq F_3$, which is a pair of non-crossing inclusions.

A permutation in $S_5$ that yields crossing inclusions is \mbox{$\tau=[45312]$}, which contains the pattern $[3412]$. A flag $V_{\bullet}$ is in $X_{\tau}$ if and only if $V_1\subseteq F_4$ and $ F_1\subseteq V_4$.
Finally, the permutation $\pi=[53421]$ in $S_5$ contains the pattern $[4231]$ and defines a non-trivial condition on $X_{\pi}$ that is not an inclusion: a flag $V_{\bullet}$ is in $X_{\pi}$ if and only if $\dim(F_3\cap V_2)\geq 1$.
\end{example}
\end{comment}

In \cite[Section 4]{iezzi2025quiver}, we defined the quiver with relations $(\Gamma, I)$:

\begin{equation}\label{eq:GammaI}
\begin{tikzcd}[]
\overset{(1,1)}{\bullet} \ar[r, ""] \ar[d, ""]  & \overset{(1,2)}{\bullet} \ar[r, ""] \ar[d, ""] & ...\ar[r, ""] \ar[d, ""] & \overset{(1,n)}{\bullet} \ar[d, ""] \\
\overset{(2,1)}{\bullet} \ar[ur, phantom, "\scalebox{1.5}{$\circlearrowleft$}"] \ar[r, ""] \ar[d, ""] & \overset{(2,2)}{\bullet} \ar[ur, phantom, "\scalebox{1.5}{$\circlearrowleft$}"] \ar[r, ""] \ar[d, ""] & ... \ar[ur, phantom, "\scalebox{1.5}{$\circlearrowleft$}"] \ar[r, ""] \ar[d, ""] & \overset{(2,n)}{\bullet} \ar[d, ""] \\
... \ar[ur, phantom, xshift=5, "\scalebox{1.5}{$\circlearrowleft$}"] \ar[r, ""] \ar[d, ""] & ... \ar[ur, phantom,xshift=3, "\scalebox{1.5}{$\circlearrowleft$}"] \ar[r, ""] \ar[d, ""] & ... \ar[ur, phantom,xshift=2, "\scalebox{1.5}{$\circlearrowleft$}"] \ar[r, ""] \ar[d, ""] & ... \ar[d, ""]\\
\underset{(n+1,1)}{\bullet} \ar[ur, phantom, "\scalebox{1.5}{$\circlearrowleft$}"] \ar[r, ""] & \underset{(n+1,2)}{\bullet} \ar[ur, phantom, "\scalebox{1.5}{$\circlearrowleft$}"] \ar[r, ""] & ... \ar[ur, phantom, "\scalebox{1.5}{$\circlearrowleft$}"] \ar[r, ""] & \underset{(n+1,n)}{\bullet}
\end{tikzcd}
\end{equation}

and its representation $M=((M_{i,j})_{(i,j)\in \Gamma_0},(M_{\alpha})_{\alpha \in \Gamma_1})$, given by 
\begin{equation*}
M_{i,j}=\C^i, \quad
M_{\alpha}=\begin{cases}
    \iota_{i+1,i} & \text{ if } s(\alpha)=(i,j), t(\alpha)=(i+1,j) \\
    \id & \text{ if } s(\alpha)=(i,j), t(\alpha)=(i,j+1)
\end{cases}
\end{equation*}
where $\iota_{i+1,i}$ denotes the inclusion of $\C^i$ into $\C^{i+1}$, represented with respect to the chosen basis $\mathcal{B}=\set{b_1, b_2,\dots, b_{n+1}}$ by the matrix
\begin{equation}\label{eq:iota}
    \iota_{i+1,i}=\begin{bsmallmatrix}
1&0&\dots&0\\
0&1&\dots&0\\
\dots&\dots&\dots&\dots\\
0&0&0&1\\
0&0&0&0
\end{bsmallmatrix}.
\end{equation}

%We write the dimension vector of $M$ as ${\bf d} =(d_{i,j})$, with $d_{i,j}= i$, for $i=1,\dots,n+1$ and $j=1,\dots,n$.

The relations imposed on $\Gamma$ are trivially satisfied by the representation $M$:
\begin{equation*}
\begin{tikzcd}[sep=large]
\overset{\C}{\bullet} \ar[r, "\id"] \ar[d, "\iota_{2,1}"]  & \overset{\C}{\bullet} \ar[r, "\id"] \ar[d, "\iota_{2,1}"] & ...\ar[r, "\id"] \ar[d, "\iota_{2,1}"] & \overset{\C}{\bullet} \ar[d, "\iota_{2,1}"] \\
\overset{\C^2}{\bullet} \ar[ur, phantom, "\scalebox{1.5}{$\circlearrowleft$}"] \ar[r, "\id"] \ar[d, "\iota_{3,2}"] & \overset{\C^2}{\bullet} \ar[ur, phantom, "\scalebox{1.5}{$\circlearrowleft$}"] \ar[r, "\id"] \ar[d, "\iota_{3,2}"]  & ... \ar[ur, phantom,xshift=5, "\scalebox{1.5}{$\circlearrowleft$}"] \ar[r, "\id"] \ar[d, "\iota_{3,2}"] & \overset{\C^2}{\bullet} \ar[d, "\iota_{3,2}"] \\
... \ar[ur, phantom,xshift=5, "\scalebox{1.5}{$\circlearrowleft$}"] \ar[r, "\id"] \ar[d, "\iota_{n+1,n}"] & ... \ar[ur, phantom,xshift=5, "\scalebox{1.5}{$\circlearrowleft$}"] \ar[r, "\id"] \ar[d, "\iota_{n+1,n}"] & ... \ar[ur, phantom, xshift=5, "\scalebox{1.5}{$\circlearrowleft$}"] \ar[r, "\id"] \ar[d,yshift=7, "\iota_{n+1,n}"] & ... \ar[d, "\iota_{n+1,n}"]\\
\overset{\C^{n+1}}{\bullet} \ar[ur, phantom, "\scalebox{1.5}{$\circlearrowleft$}"] \ar[r, "\id"] & \overset{\C^{n+1}}{\bullet} \ar[ur, phantom, "\scalebox{1.5}{$\circlearrowleft$}"] \ar[r, "\id"] & ... \ar[ur, phantom, xshift=5,yshift=5, "\scalebox{1.5}{$\circlearrowleft$}"] \ar[r, "\id"] & \overset{\C^{n+1}}{\bullet}\\
\end{tikzcd}.
\end{equation*}

\vspace{-1cm}

After showing that every quiver Grassmannian associated to the $(\Gamma,I)$-representation $M$ is a smooth and irreducible projective variety, we defined two different, appropriate dimension vectors ${\bf r}^w$ and ${\bf e}^w$ for $(\Gamma,I)$ and proved the following results:

\begin{theorem}[Theorem 5.19, \cite{iezzi2025quiver}]
  The quiver Grassmannian $\Gr_{{\bf r}^w}(M)$ is isomorphic to any Bott-Samelson resolution of $X_w$ associated to a geometrically compatible decomposition of $w$.
\end{theorem}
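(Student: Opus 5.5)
The plan is to build explicit mutually inverse morphisms between $\Gr_{{\bf r}^w}(M)$ and the Bott--Samelson variety $Z_{\underline{w}}$ of a geometrically compatible decomposition $\underline{w}$ of $w$. I take $Z_{\underline{w}}$ in its Magyar-type description: tuples of subspaces of $\C^{n+1}$ subject to inclusion and dimension conditions, one new subspace for each factor of the decomposition. First I unwind what a point of $\Gr_{{\bf r}^w}(M)$ is.

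Since every horizontal map of $M$ is the identity and every vertical map is the inclusion $\iota_{i+1,i}$, a subrepresentation $N$ is a family of subspaces $N_{i,j}\subseteq \C^i=F_i$. This family satisfies $N_{i,j}\subseteq N_{i+1,j}$ and $N_{i,j}\subseteq N_{i,j+1}$, and $\dim N_{i,j}=r^w_{i,j}$. The commutativity relations of $I$ hold automatically. Thus $\Gr_{{\bf r}^w}(M)$ is the closed subvariety of $\prod_{i,j}\Gr(r^w_{i,j},\C^i)$ cut out by these inclusions. It is smooth and irreducible by the result of \cite{iezzi2025quiver} recalled above.

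Next I compare dimension patterns. Column $j$ of ${\bf r}^w$ should record, through the partial flag $N_{1,j}\subseteq\dots\subseteq N_{n+1,j}$ sitting inside $F_\bullet$, the position reached after the $j$-th block of the decomposition. Reading across row $i$ then records how the $i$-th piece of the configuration changes from one step to the next. I would check directly from the definition of ${\bf r}^w$ that the following holds: whenever $r^w_{i,j}=r^w_{i,j+1}$, or $r^w_{i,j}$ equals $0$ or $i$, the space $N_{i,j}$ is forced to equal a neighbouring space or one of $0,F_i$. After discarding these forced vertices, the remaining free subspaces, with their dimensions and inclusion relations, should match exactly the data of a point of $Z_{\underline{w}}$. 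Geometric compatibility of the decomposition is precisely what guarantees that this matching is possible.

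With this dictionary, the forgetful map $\Phi:\Gr_{{\bf r}^w}(M)\to Z_{\underline{w}}$ keeps the free subspaces and is a morphism, being a projection of products of Grassmannians. The inverse $\Psi$ reconstructs each forced $N_{i,j}$ as the appropriate neighbour, $0$, or $F_i$. This is again a morphism, because it is a composition of projections and constant maps. It remains to check that $\Psi$ lands in $\Gr_{{\bf r}^w}(M)$, i.e. that all vertical and horizontal inclusions hold. Then $\Phi\circ\Psi$ and $\Psi\circ\Phi$ are identities on points, and both varieties are reduced (smooth), so $\Phi$ is an isomorphism. Independence of the chosen geometrically compatible decomposition then follows because the left side depends only on $w$.

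The main obstacle is the middle step: proving that every inclusion imposed by $\Gamma$ is either one of the Bott--Samelson incidence conditions or a consequence of them. This is exactly where geometric compatibility of the decomposition must be used. I would first treat the case where each factor is a single simple reflection, and then argue by induction on the length of the decomposition. At each step I would add one column and verify that the new inclusions $N_{i,j}\subseteq N_{i,j+1}$ are equivalent to the single new Bott--Samelson condition attached to that factor.
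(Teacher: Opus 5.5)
Your setup is sound. A point of $\Gr_{{\bf r}^w}(M)$ is a grid $N_{i,j}\subseteq\C^i$ with $\dim N_{i,j}=r^w_{i,j}$, $N_{i,j}\subseteq N_{i+1,j}$ and $N_{i,j}\subseteq N_{i,j+1}$, and your closing argument with mutually inverse morphisms between reduced varieties is fine. But the whole theorem sits in the sentence ``the remaining free subspaces \dots\ should match exactly the data of a point of $Z_{\underline{w}}$'', and you do not prove it. You never say which vertex goes with which letter of $\underline{w}$. You never show that the free vertices (really, classes of vertices identified by forced equalities) number $\ell(w)$ and have the right dimensions. You invoke geometric compatibility without stating or using it, so ``geometric compatibility is precisely what guarantees the matching'' just restates the claim. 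Even your explicit forced-vertex criterion is incomplete, since it omits vertical equalities $r^w_{i,j}=r^w_{i\pm1,j}$. For $w=[231]$ we have $r^w_{3,1}=1\neq r^w_{3,2}=2$, so your list keeps $N_{2,2},N_{3,1},N_{3,2}$ as three free subspaces for a $2$-dimensional variety, whereas in fact $N_{3,1}=N_{2,1}=N_{2,2}$.

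The induction you propose for the missing step does not fit the combinatorics. $\Gamma$ has $n$ columns for every $w$, while a reduced decomposition has $\ell(w)$ letters, so an induction on the length of $\underline{w}$ that adds one column per step cannot work. One column usually carries several free vertices: for $w_0\in S_4$ they are $(2,3),(3,3),(4,3),(3,2),(4,2),(4,1)$, three of them in column $3$. Adding a column therefore adds several subspaces, each with a lower and an upper incidence condition, some of them vertical inclusions inside the new column. That is not ``the single new Bott--Samelson condition''.

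The direction also matters. Building from the left, column $1$ alone for $w_0\in S_4$ is $\{N_{4,1}\}\cong\mathbb{P}^3$, which is not a Bott--Samelson variety (its Picard rank is $1$, not $3$).

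A version that works is to add vertices from column $n$ down to column $1$, top to bottom within each column, with $N_{0,j}=0$, $N_{i,n+1}=\C^i$, $r^w_{0,j}=0$, $r^w_{i,n+1}=i$.
\begin{itemize}
\item Every arrow of $\Gamma$, together with $N_{i,n}\subseteq\C^i$, becomes exactly one of the two conditions $N_{i-1,j}\subseteq N_{i,j}\subseteq N_{i,j+1}$ for the vertex being added.
\item The inclusion $N_{i-1,j}\subseteq N_{i,j+1}$ already holds from earlier steps.
\item The difference $r^w_{i,j}-r^w_{i-1,j}\in\{0,1\}$ equals $1$ exactly when $w^{-1}(i)\le j$, and $r^w_{i,j+1}-r^w_{i,j}\in\{0,1\}$ equals $1$ exactly when $w(j+1)\le i$.
\item So each step is a point unless both hold, in which case $N_{i,j}$ ranges over $\mathbb{P}(N_{i,j+1}/N_{i-1,j})\cong\mathbb{P}^1$.
\item These $\mathbb{P}^1$-steps are in bijection with the inversions of $w$. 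They spell a word in the letters $s_{r^w_{i,j}}$; for $w_0\in S_4$ this is $s_1s_2s_3s_1s_2s_1$.
\end{itemize}
What remains, and what your proposal leaves undone, is the actual content of the theorem:
\begin{itemize}
\item show that $N_{i-1,j}$ and $N_{i,j+1}$ are the most recently chosen subspaces of dimensions $r^w_{i,j}\mp1$, so the tower is Magyar's configuration variety for this word;
\item show that the word is a reduced word for $w$;
\item show that the decompositions arising this way are the geometrically compatible ones of \cite{iezzi2025quiver}.
\end{itemize}
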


\begin{theorem}[Theorem 6.4, \cite{iezzi2025quiver}]
    If $w\in S_{n+1}$ avoids the patterns $[4231]$ and $[3412]$, the quiver Grassmannian $\Gr_{{\bf e}^w}(M)$ is isomorphic to the Schubert variety $X_w$.
\end{theorem}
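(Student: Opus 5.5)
The plan is to write down explicit, mutually inverse morphisms between $\Gr_{{\bf e}^w}(M)$ and $X_w$, using the description of $X_w$ by non-crossing inclusions from \cite[Theorem 1.1]{gasharov2002cohomology}. Since $w$ avoids $[4231]$ and $[3412]$, $X_w$ is cut out inside $\Flag_{n+1}$ by finitely many conditions $V_q\subseteq F_p$ and $F_s\subseteq V_q$, with no crossing pair.

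I will use ${\bf e}^w$ in the following form, which is my reconstruction and not something I can check against \cite{iezzi2025quiver}: the bottom row is $e^w_{n+1,j}=j$; $e^w_{i,j}=j$ when $V_j\subseteq F_i$ is among the defining conditions; $e^w_{i,j}=i$ when $F_i\subseteq V_j$ is; and $e^w_{i,j}=0$ otherwise. The whole argument is organised around this reading. The key observation is that $M_{i,j}=\C^i$ is identified, via the composite of the maps $\iota$, with $F_i\subseteq\C^{n+1}$. Because the horizontal maps are identities, a subrepresentation $N$ has bottom row $N_{n+1,1}\subseteq\dots\subseteq N_{n+1,n}$, a full flag $V_\bullet$.

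First I define $\Psi\colon\Gr_{{\bf e}^w}(M)\to\Flag_{n+1}$ by $N\mapsto(N_{n+1,j})_j$. It is a morphism, being the restriction of a projection of $\prod\Gr(e_i,M_i)$. I then show its image lies in $X_w$. If $e^w_{i,j}=j$, the vertical inclusions give $N_{i,j}\subseteq F_i$ inside $N_{n+1,j}$, and a dimension count forces $N_{i,j}=V_j$, hence $V_j\subseteq F_i$. If $e^w_{i,j}=i$, then $N_{i,j}=M_{i,j}=F_i$ is forced, and its image lies in $N_{n+1,j}=V_j$. So every defining inclusion holds.

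Conversely, I define $\Phi\colon X_w\to\Gr_{{\bf e}^w}(M)$ by putting $N_{n+1,j}=V_j$, $N_{i,j}=V_j$ where $e^w_{i,j}=j$, $N_{i,j}=F_i$ where $e^w_{i,j}=i$, and $0$ elsewhere. The dimensions are right by construction, and $\Phi$ is a morphism because every component is either constant or a coordinate of $V_\bullet$. The computation above shows that $\Psi\circ\Phi=\id$ and $\Phi\circ\Psi=\id$, since every $N_{i,j}$ is uniquely determined by $V_\bullet$.

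The main obstacle is checking that $\Phi(V_\bullet)$ really is a subrepresentation, i.e.\ compatible with all vertical and horizontal arrows, and that the dimension vector is consistent. This is exactly where non-crossing is needed. I would split the check into cases according to the types of neighbouring vertices:
\begin{itemize}
\item Vertical arrow out of a vertex of type $V_j\subseteq F_i$: this implies $V_j\subseteq F_{i+1}$, so the target is of the same type (or is the bottom row).
\item Horizontal arrow out of a vertex of type $F_i\subseteq V_j$: this implies $F_i\subseteq V_{j+1}$, so again the type is preserved.
\item Mixed pairs, for example a vertex of type $F_i\subseteq V_j$ sitting above or to the left of one of type $V_{j'}\subseteq F_{i'}$ with $j<j'$ and $i>i'$: such a pair is precisely a crossing pair, so it cannot occur.
\end{itemize}
If ${\bf e}^w$ in \cite{iezzi2025quiver} is normalised differently (for instance by closing the inclusion conditions under these implications), I would first replace the conditions by their closure and rerun the same case analysis.
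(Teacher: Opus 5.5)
\textbf{There is a genuine gap: your reconstruction of ${\bf e}^w$ is wrong, and the argument breaks because of it.}

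The definition recalled in this paper sets $e^w_{i,j}=r^w_{i,j}$ only when $r^w_{i,j}\in\{0,\min\{i,j\}\}$. At the intermediate vertices, where $0<r^w_{i,j}<\min\{i,j\}$ and no inclusion between $F_i$ and $V_j$ holds on the open cell, it sets $e^w_{i,j}=\max\{e^w_{i-1,j},e^w_{i,j-1}\}$, not $0$.

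With your vector the statement is false. Take $w=[231]$:
\begin{enumerate}
\item You get $e_{2,1}=1$ (from $V_1\subseteq F_2$) but $e_{2,2}=0$, since $r^w_{2,2}=1<2$.
\item The arrow $(2,1)\to(2,2)$ carries the identity of $\C^2$, so no subrepresentation exists and your Grassmannian is empty.
\item But $X_{[231]}$ is a surface. The paper has $e^w_{2,2}=1$.
\end{enumerate}
Your compatibility check misses exactly the arrows that land on intermediate vertices: horizontal arrows out of $V_j\subseteq F_i$ vertices, and vertical arrows out of $F_i\subseteq V_j$ vertices. Your third bullet describes no arrow of $\Gamma$ at all, since arrows only go to $(i+1,j)$ or $(i,j+1)$. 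In fact two adjacent inclusion-type vertices are always compatible, so non-crossing does no work anywhere in your proof. That is a sign the real content is missing. Your fallback of closing the conditions under implications also cannot help, because no inclusion holds at an intermediate vertex.

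\textbf{What needs to be supplied is the treatment of intermediate vertices.} There $N_{i,j}$ contains both $N_{i-1,j}$ and $N_{i,j-1}$ and has the dimension of the larger one. So it equals the larger one, and the smaller must lie inside it.

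For $\Psi$ this is harmless. By induction each $N_{i,j}$ is still some $F_{i'}$ or $V_{j'}$ determined by $V_\bullet$, so your injectivity argument survives. To define $\Phi$, however, you must prove two things: that the two incoming subspaces are nested for every $V_\bullet\in X_w$, and that the resulting subspace is compatible with the outgoing arrows. This is where non-crossing is genuinely used.

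For example, take $w=[3412]$. The vertex $(3,3)$ has $e^w_{3,3}=1$. It receives $F_1$, propagated from $F_1\subseteq V_3$, and $V_1$, propagated from $V_1\subseteq F_3$. This forces $V_1=F_1$, so $\Gr_{{\bf e}^w}(M)=\{V_\bullet : V_1=F_1\}\cong\Flag_3$. That is a proper $3$-dimensional subvariety of the $4$-dimensional $X_{[3412]}$.

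Non-crossing must also be read for an irredundant (essential) set of conditions. For the smooth case $w=[132]$, the redundant pair $V_1\subseteq F_2$, $F_1\subseteq V_2$ is crossing in the sense of the definition. The vertex $(2,2)$ receives $F_1$ and $V_1$, and these are nested only because $V_1=F_1$ holds on $X_{[132]}$.

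What you still need is an inductive argument: when an essential defining set has no crossing pairs, the subspaces arriving at each intermediate vertex are comparable on all of $X_w$.
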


In Section \ref{sec:lindegschubvar} of this paper, we will exploit these results for defining linear degenerations of Schubert varieties. In order to do this, we study in the following section the subvariety of representations of $(\Gamma,I)$ that correspond to such degenerations.

\section{The variety $R^{\iota}_{\bf d}$ of $(\Gamma,I)$-representations}\label{sec:subvarietyR}

In this paper, we restrict the question about the representation type of $(\Gamma,I)$ to certain representations in $\rep_{\C}(\Gamma,I)$ and show that they can be decomposed as direct sums of indecomposables belonging to a finite class.
The motivation for such a restriction will be explained in Section \ref{sec:lindegschubvar}, where we introduce our definition of linear degenerations of Schubert varieties using quiver Grassmannians.

\begin{definition}\label{def:subvariety}
    Let $(\Gamma, I)$ be the quiver defined in \eqref{eq:GammaI} and $R_{\bf d}$ the associated variety of representations (see Definition \ref{def:repspace}). We denote by $R^{\iota}_{\bf d}$ the subvariety of $R_{\bf d}$ defined as
    \begin{equation*}
        R^{\iota}_{\bf d}=\{M \in  R_{\bf d} | M_{\alpha}=\iota_{i+1,i} \; \forall \alpha: (i,j)\to (i+1,j), i=1,\dots,n, j=1,\dots,n\}.
    \end{equation*}
\end{definition}

    In words, we obtain $R^{\iota}_{\bf d}$ by setting all the linear maps associated to the vertical arrows of $(\Gamma,I)$ to the standard inclusion of $\C^i$ into $\C^{i+1}$ (see the definition in \eqref{eq:iota}).

\begin{remark}\label{remark:subvariety}
    Because of the commutativity relations on $(\Gamma,I)$, the linear maps of a representation $M$ in $R^{\iota}_{\bf d}$ associated to the horizontal arrows of $(\Gamma, I)$ are not independent of one another. Let us denote by $f_{i}^j$ the matrix representation of the linear map $M_{\alpha}$, for $\alpha$ such that $s(\alpha)=(i,j)$ and $t(\alpha)=(i,j+1)$, with respect to the chosen basis $\mathcal{B}=\set{b_1, b_2,\dots, b_{n+1}}$. The representation $M$ is then
\begin{equation*}
\begin{tikzcd}[sep=large]
\overset{\C}{\bullet} \ar[r, "\textcolor{forestgreen}{f_{1}^1}"] \ar[d, "\iota_{2,1}"]  & \overset{\C}{\bullet} \ar[r, "\textcolor{forestgreen}{f_{1}^2}"] \ar[d, "\iota_{2,1}"] & ...\ar[r, "\textcolor{forestgreen}{f_{1}^{n-1}}"] \ar[d, "\iota_{2,1}"] & \overset{\C}{\bullet} \ar[d, "\iota_{2,1}"] \\
\overset{\C^2}{\bullet} \ar[ur, phantom, "\scalebox{1.5}{$\circlearrowleft$}"] \ar[r, "\textcolor{forestgreen}{f_{2}^1}"] \ar[d, "\iota_{3,2}"] & \overset{\C^2}{\bullet} \ar[ur, phantom, "\scalebox{1.5}{$\circlearrowleft$}"] \ar[r, "\textcolor{forestgreen}{f_{2}^2}"] \ar[d, "\iota_{3,2}"]  & ... \ar[ur, phantom,xshift=5, "\scalebox{1.5}{$\circlearrowleft$}"] \ar[r, "\textcolor{forestgreen}{f_{2}^{n-1}}"] \ar[d, "\iota_{3,2}"] & \overset{\C^2}{\bullet} \ar[d, "\iota_{3,2}"] \\
... \ar[ur, phantom,xshift=5, "\scalebox{1.5}{$\circlearrowleft$}"] \ar[r, "\textcolor{forestgreen}{f_{i}^1}"] \ar[d, "\iota_{n+1,n}"] & ... \ar[ur, phantom,xshift=5, "\scalebox{1.5}{$\circlearrowleft$}"] \ar[r, "\textcolor{forestgreen}{f_{i}^j}"] \ar[d, "\iota_{n+1,n}"] & ... \ar[ur, phantom, xshift=5, "\scalebox{1.5}{$\circlearrowleft$}"] \ar[r, "\textcolor{forestgreen}{f_{i}^{n-1}}"] \ar[d,yshift=5, "\iota_{n+1,n}"] & ... \ar[d, "\iota_{n+1,n}"]\\
\overset{\C^{n+1}}{\bullet} \ar[ur, phantom, "\scalebox{1.5}{$\circlearrowleft$}"] \ar[r, "\textcolor{forestgreen}{f_{n+1}^1}"] & \overset{\C^{n+1}}{\bullet} \ar[ur, phantom, "\scalebox{1.5}{$\circlearrowleft$}"] \ar[r, "\textcolor{forestgreen}{f_{n+1}^2}"] & ... \ar[ur, phantom, xshift=5,yshift=5, "\scalebox{1.5}{$\circlearrowleft$}"] \ar[r, "\textcolor{forestgreen}{f_{n+1}^{n-1}}"] & \overset{\C^{n+1}}{\bullet}\\
\end{tikzcd}.
\end{equation*}
\vspace{-1cm}

We write $f_{1}^1=\begin{bsmallmatrix}
        a
    \end{bsmallmatrix}$ for some $a\in \C$, and applying $\iota_{2,1}$ after $f_{1}^1$ we get $\iota_{2,1}f_{1}^1=\begin{bsmallmatrix}
        a\\
        0
    \end{bsmallmatrix}$.
The relation $f_{2}^1\iota_{2,1}=\iota_{2,1}f_{1}^1$ implies $f_{2}^1=\begin{bsmallmatrix}
        a & b\\
        0 & c
    \end{bsmallmatrix}$, for some $b,c\in \C$. Similarly, we see that $f_{3}^1$ must be $f_{3}^1=\begin{bsmallmatrix}
        a & b & d\\
        0 & c & e\\
        0 & 0 & l
    \end{bsmallmatrix}$ for some $d,e,l\in \C$, and so forth for all $i=1,\dots,n+1$ and $j=1,\dots,n-1$. This means that a $(\Gamma,I)$-representation $M$ in the subvariety $R^{\iota}_{\bf d}$ is determined by the choice of $n-1$ upper-triangular matrices of size $n+1$. We denote this tuple by
    \begin{equation}\label{eq:ftuple}
        f=(f_{n+1}^1,\dots,f_{n+1}^{n-1})\in \prod_{j=1}^{n-1} U_{n+1},
    \end{equation}
    where $U_{n+1}$ is the subset of $\Mat_{n+1}$ consisting of upper-triangular matrices (different from the subgroup $B_{n+1}$ of invertible upper-triangular matrices). From now on, we will focus on studying the $(\Gamma,I)-$representations in the space $R^{\iota}_{\bf d}$, in particular by describing their decompositions into indecomposable representations of $(\Gamma, I)$ and how they can be parametrised. Since the representations in $R^{\iota}_{\bf d}$ are determined by the choice of $f$, we will denote them by $M^f$ and, depending on the context, identify $M^f$ with the tuple $(f_{n+1}^1,\dots,f_{n+1}^{n-1})$ and therefore write $M^f=(f_{n+1}^1,\dots,f_{n+1}^{n-1})$.
\end{remark}

Let us denote by $U^{(h_1,\dots,h_n)}$ the indecomposable representation of $(\Gamma,I)$ defined as:

\begin{equation}\label{eq:indecompUspaces}
    U^{(h_1,\dots,h_n)}_{i,j}\coloneqq\begin{cases}
        0 & \text{ if } i \leq n+1 - h_j \\
        \C & \text{ if } i > n+1 - h_j
    \end{cases},
\end{equation}
where $1\leq h_j\leq n+1$, for $i=1,\dots,n+1$ and $1\leq j \leq n$, and whose linear maps are

\begin{equation}\label{eq:indecompUmaps}
    U^{(h_1,\dots,h_n)}_{\alpha}\coloneqq \begin{cases}
        \id & \text{ if } U^{(h_1,\dots,h_n)}_{s(\alpha),t(\alpha)}=\C \\
        0 & \text{ otherwise }
    \end{cases}.
\end{equation}

Each index $h_j$ represents the ``height'' of the first nonzero vector space in the $j$-th column of $(\Gamma, I)$: this space and all the spaces below it are isomorphic to $\C$ by the definition given in \eqref{eq:indecompUspaces}. 
In order to satisfy the commutativity relations of $(\Gamma,I)$, we further require $ h_j\leq h_{j'}$ for any $j\leq j'$ and $h_{j'}> 0$. It is straightforward to verify that, if $h_{j'} >0$, the relation $h_j>h_{j'}$ implies the existence of the following diagram in $(\Gamma,I)$:
\begin{equation*}
\begin{tikzcd}[sep=large]
\overset{\C}{\bullet} \ar[r, "{[0]}"] \ar[d, "\id"]  & \overset{0}{\bullet} \ar[d, "{[0]}"] \\
\overset{\C}{\bullet} \ar[r, "\id"] & \overset{\C}{\bullet}
\end{tikzcd},
\end{equation*}
which does not commute.
As shown in Theorem \ref{thm:indecompU}, the definition for the linear maps of $U^{(h_1,\dots,h_n)}$ given in \eqref{eq:indecompUmaps} ensures that these representations of $(\Gamma, I)$ are actually indecomposable.

\begin{example}
Two examples of indecomposable $(\Gamma, I)$-representations of the form $U^{(h_1,\dots,h_n)}$ are $U^{(1,2)}$ and $U^{(2,2,4)}$, respectively for $n+1=3$ and $n+1=4$:
\begin{equation*}
U^{(1,2)}=
\begin{tikzcd}[sep=large]
\overset{0}{\bullet} \ar[r, "0"] \ar[d, "0"]  & \overset{0}{\bullet} \ar[d, "0"] \\
\overset{0}{\bullet} \ar[ur,phantom, "\scalebox{1.5}{$\circlearrowleft$}"] \ar[r, "0"] \ar[d, "0"] & \overset{\C}{\bullet}  \ar[d, "\id"] \\
\overset{\C}{\bullet} \ar[ur, phantom, "\scalebox{1.5}{$\circlearrowleft$}"] \ar[r, "\id"] & \overset{\C}{\bullet}
\end{tikzcd},\quad\quad
U^{(2,2,4)}=
\begin{tikzcd}[sep=large]
\overset{0}{\bullet} \ar[r, "0"] \ar[d, "0"]  & \overset{0}{\bullet} \ar[r, "0"] \ar[d, "0"] & \overset{\C}{\bullet} \ar[d, "\id"] \\
\overset{0}{\bullet} \ar[ur, phantom, "\scalebox{1.5}{$\circlearrowleft$}"] \ar[r, "0"] \ar[d, "0"] & \overset{0}{\bullet} \ar[ur, phantom, "\scalebox{1.5}{$\circlearrowleft$}"] \ar[r, "0"] \ar[d, "0"]  & \overset{\C}{\bullet} \ar[d, "\id"]\\
\overset{\C}{\bullet} \ar[ur, phantom, "\scalebox{1.5}{$\circlearrowleft$}"] \ar[r, "\id"] \ar[d, "\id"] & \overset{\C}{\bullet} \ar[ur, phantom, "\scalebox{1.5}{$\circlearrowleft$}"] \ar[r, "\id"] \ar[d, "\id"] & \overset{\C}{\bullet} \ar[d, "\id"]\\
\overset{\C}{\bullet} \ar[ur, phantom, "\scalebox{1.5}{$\circlearrowleft$}"] \ar[r, "\id"] & \overset{\C}{\bullet} \ar[ur, phantom, "\scalebox{1.5}{$\circlearrowleft$}"] \ar[r, "\id"] & \overset{\C}{\bullet}\\
\end{tikzcd}.
\end{equation*}
\end{example}

\begin{example}\label{ex:f_1f_2}
 For $n+1=4$, the representations $M^f$ in $R^{\iota}_{\bf d}$ are determined by $f=(f_4^1,f_4^2)\in U_4\times U_4$. To simplify our notation, when possible, we omit the row index and only specify the ``column'' of $(\Gamma,I)$ where a linear map appears. In this case, for example, we write $f=(f_1,f_2)\in U_4\times U_4$. A possible choice is:
    \begin{equation*}
        f_1= \begin{bsmallmatrix}
        0 & 0 & 0 & 0 \\
        0 & 1 & 0 & 0 \\
        0 & 0 & 0 & 0 \\
        0 & 0 & 0 & 1
    \end{bsmallmatrix},
        \quad f_2= \begin{bsmallmatrix}
        1 & 0 & 0 & 0 \\
        0 & 0 & 0 & 0 \\
        0 & 0 & 1 & 0 \\
        0 & 0 & 0 & 1
    \end{bsmallmatrix},
    \end{equation*}
    which corresponds to the representation
    \begin{equation*}
  M^f=
    \begin{tikzcd}[sep=large, ampersand replacement=\&]
\overset{\C}{\bullet} \ar[r, "{[0]}"] \ar[d, "\iota_{2,1}"]  \& \overset{\C}{\bullet} \ar[r, "{[1]}"] \ar[d, "\iota_{2,1}"] \& \overset{\C}{\bullet} \ar[d, "\iota_{2,1}"] \\
\overset{\C^2}{\bullet} \ar[ur, phantom, "\scalebox{1.5}{$\circlearrowleft$}"] \ar[r, "{\begin{bsmallmatrix}
        0 & 0 \\
        0 & 1 
    \end{bsmallmatrix}}"] \ar[d, "\iota_{3,2}"] \& \overset{\C^2}{\bullet} \ar[ur, phantom, "\scalebox{1.5}{$\circlearrowleft$}"] \ar[r, "{\begin{bsmallmatrix}
        1 & 0 \\
        0 & 0 
    \end{bsmallmatrix}}"] \ar[d, "\iota_{3,2}"]  \& \overset{\C^2}{\bullet} \ar[d, "\iota_{3,2}"]\\
\overset{\C^3}{\bullet} \ar[ur, phantom, yshift=5, "\scalebox{1.5}{$\circlearrowleft$}"] \ar[r, "{\begin{bsmallmatrix}
        0 & 0 & 0 \\
        0 & 1 & 0 \\
        0 & 0 & 0 
    \end{bsmallmatrix}}"] \ar[d, "\iota_{4,3}"] \& \overset{\C^3}{\bullet} \ar[ur, phantom, yshift=5,"\scalebox{1.5}{$\circlearrowleft$}"] \ar[r, "{\begin{bsmallmatrix}
        1 & 0 & 0 \\
        0 & 0 & 0 \\
        0 & 0 & 1
    \end{bsmallmatrix}}"] \ar[d, "\iota_{4,3}"] \& \overset{\C^3}{\bullet} \ar[d, "\iota_{4,3}"]\\
\overset{\C^4}{\bullet} \ar[ur, phantom,yshift=9, "\scalebox{1.5}{$\circlearrowleft$}"] \ar[r, "{\begin{bsmallmatrix}
        0 & 0 & 0 & 0 \\
        0 & 1 & 0 & 0 \\
        0 & 0 & 0 & 0 \\
        0 & 0 & 0 & 1
    \end{bsmallmatrix}}"] \& \overset{\C^4}{\bullet} \ar[ur, phantom,yshift=9, "\scalebox{1.5}{$\circlearrowleft$}"] \ar[r, "{\begin{bsmallmatrix}
        1 & 0 & 0 & 0 \\
        0 & 0 & 0 & 0 \\
        0 & 0 & 1 & 0 \\
        0 & 0 & 0 & 1
    \end{bsmallmatrix}}"] \& \overset{\C^4}{\bullet}
\end{tikzcd}.
\end{equation*}
The decomposition of $M^f$ is then $M^f=U^{(4,0,0)}\oplus  U^{(3,3,0)}\oplus U^{(2,0,0)}\oplus U^{(1,1,1)}\oplus U^{(0,4,4)}\oplus U^{(0,2,2)}\oplus U^{(0,0,3)}$.
\end{example}

\begin{remark}
    We denote the (unique) projective, injective and simple indecomposable representations of $(\Gamma,I)$ (see Definition \ref{def:projinj}) by $P(i,j), I(i,j)$ and $S(i,j)$, respectively.
    Among them, the ones of the form $U^{(h_1,\dots,h_n)}$, for some $h_1,\dots,h_n$, are $P(1,j), P(i,1), I(n+1,j)$ and $S(n+1,j)$.
    To simplify notation, we show them in a few examples for the specific case of $n+1=4$:
\begin{equation*}
P(1,1)=\begin{tikzcd}[]
\overset{\C}{\bullet} \ar[r, "\id"] \ar[d, "\id"]  & \overset{\C}{\bullet} \ar[r, "\id"] \ar[d, "\id"] & \overset{\C}{\bullet} \ar[d, "\id"] \\
\overset{\C}{\bullet} \ar[ur, phantom, "\scalebox{1.5}{$\circlearrowleft$}"] \ar[r, "\id"] \ar[d, "\id"] & \overset{\C}{\bullet} \ar[ur, phantom, "\scalebox{1.5}{$\circlearrowleft$}"] \ar[r, "\id"] \ar[d, "\id"]  & \overset{\C}{\bullet} \ar[d, "\id"]\\
\overset{\C}{\bullet} \ar[ur, phantom, "\scalebox{1.5}{$\circlearrowleft$}"] \ar[r, "\id"] \ar[d, "\id"] & \overset{\C}{\bullet} \ar[ur, phantom, "\scalebox{1.5}{$\circlearrowleft$}"] \ar[r, "\id"] \ar[d, "\id"] & \overset{\C}{\bullet} \ar[d, "\id"]\\
\overset{\C}{\bullet} \ar[ur, phantom, "\scalebox{1.5}{$\circlearrowleft$}"] \ar[r, "\id"] & \overset{\C}{\bullet} \ar[ur, phantom, "\scalebox{1.5}{$\circlearrowleft$}"] \ar[r, "\id"] & \overset{\C}{\bullet}\\
\end{tikzcd},\quad
P(1,2)= \begin{tikzcd}[]
\overset{0}{\bullet} \ar[r, "0"] \ar[d, "0"]  & \overset{\C}{\bullet} \ar[r, "\id"] \ar[d, "\id"] & \overset{\C}{\bullet} \ar[d, "\id"] \\
\overset{0}{\bullet} \ar[ur, phantom, "\scalebox{1.5}{$\circlearrowleft$}"] \ar[r, "0"] \ar[d, "0"] & \overset{\C}{\bullet} \ar[ur, phantom, "\scalebox{1.5}{$\circlearrowleft$}"] \ar[r, "\id"] \ar[d, "\id"]  & \overset{\C}{\bullet} \ar[d, "\id"]\\
\overset{0}{\bullet} \ar[ur, phantom, "\scalebox{1.5}{$\circlearrowleft$}"] \ar[r, "0"] \ar[d, "0"] & \overset{\C}{\bullet} \ar[ur, phantom, "\scalebox{1.5}{$\circlearrowleft$}"] \ar[r, "\id"] \ar[d, "\id"] & \overset{\C}{\bullet} \ar[d, "\id"]\\
\overset{0}{\bullet} \ar[ur, phantom, "\scalebox{1.5}{$\circlearrowleft$}"] \ar[r, "0"] & \overset{\C}{\bullet} \ar[ur, phantom, "\scalebox{1.5}{$\circlearrowleft$}"] \ar[r, "\id"] & \overset{\C}{\bullet}\\
\end{tikzcd},
\end{equation*}

\begin{equation*}
I(4,1)=\begin{tikzcd}[]
\overset{\C}{\bullet} \ar[r, "0"] \ar[d, "\id"]  & \overset{0}{\bullet} \ar[r, "0"] \ar[d, "0"] & \overset{0}{\bullet} \ar[d, "0"] \\
\overset{\C}{\bullet} \ar[ur, phantom, "\scalebox{1.5}{$\circlearrowleft$}"] \ar[r, "0"] \ar[d, "\id"] & \overset{0}{\bullet} \ar[ur, phantom, "\scalebox{1.5}{$\circlearrowleft$}"] \ar[r, "0"] \ar[d, "0"]  & \overset{0}{\bullet} \ar[d, "0"]\\
\overset{\C}{\bullet} \ar[ur, phantom, "\scalebox{1.5}{$\circlearrowleft$}"] \ar[r, "0"] \ar[d, "\id"] & \overset{0}{\bullet} \ar[ur, phantom, "\scalebox{1.5}{$\circlearrowleft$}"] \ar[r, "0"] \ar[d, "0"] & \overset{0}{\bullet} \ar[d, "0"]\\
\overset{\C}{\bullet} \ar[ur, phantom, "\scalebox{1.5}{$\circlearrowleft$}"] \ar[r, "0"] & \overset{0}{\bullet} \ar[ur, phantom, "\scalebox{1.5}{$\circlearrowleft$}"] \ar[r, "0"] & \overset{0}{\bullet}\\
\end{tikzcd},\quad
I(4,2)=\begin{tikzcd}[]
\overset{\C}{\bullet} \ar[r, "\id"] \ar[d, "\id"]  & \overset{\C}{\bullet} \ar[r, "0"] \ar[d, "\id"] & \overset{0}{\bullet} \ar[d, "0"] \\
\overset{\C}{\bullet} \ar[ur, phantom, "\scalebox{1.5}{$\circlearrowleft$}"] \ar[r, "\id"] \ar[d, "\id"] & \overset{\C}{\bullet} \ar[ur, phantom, "\scalebox{1.5}{$\circlearrowleft$}"] \ar[r, "0"] \ar[d, "\id"]  & \overset{0}{\bullet} \ar[d, "0"]\\
\overset{\C}{\bullet} \ar[ur, phantom, "\scalebox{1.5}{$\circlearrowleft$}"] \ar[r, "\id"] \ar[d, "\id"] & \overset{\C}{\bullet} \ar[ur, phantom, "\scalebox{1.5}{$\circlearrowleft$}"] \ar[r, "0"] \ar[d, "\id"] & \overset{0}{\bullet} \ar[d, "0"]\\
\overset{\C}{\bullet} \ar[ur, phantom, "\scalebox{1.5}{$\circlearrowleft$}"] \ar[r, "\id"] & \overset{\C}{\bullet} \ar[ur, phantom, "\scalebox{1.5}{$\circlearrowleft$}"] \ar[r, "0"] & \overset{0}{\bullet}\\
\end{tikzcd},
\end{equation*}

\begin{equation*}
I(4,3)=\begin{tikzcd}[]
\overset{\C}{\bullet} \ar[r, "\id"] \ar[d, "\id"]  & \overset{\C}{\bullet} \ar[r, "\id"] \ar[d, "\id"] & \overset{\C}{\bullet} \ar[d, "\id"] \\
\overset{\C}{\bullet} \ar[ur, phantom, "\scalebox{1.5}{$\circlearrowleft$}"] \ar[r, "\id"] \ar[d, "\id"] & \overset{\C}{\bullet} \ar[ur, phantom, "\scalebox{1.5}{$\circlearrowleft$}"] \ar[r, "\id"] \ar[d, "\id"]  & \overset{\C}{\bullet} \ar[d, "\id"]\\
\overset{\C}{\bullet} \ar[ur, phantom, "\scalebox{1.5}{$\circlearrowleft$}"] \ar[r, "\id"] \ar[d, "\id"] & \overset{\C}{\bullet} \ar[ur, phantom, "\scalebox{1.5}{$\circlearrowleft$}"] \ar[r, "\id"] \ar[d, "\id"] & \overset{\C}{\bullet} \ar[d, "\id"]\\
\overset{\C}{\bullet} \ar[ur, phantom, "\scalebox{1.5}{$\circlearrowleft$}"] \ar[r, "\id"] & \overset{\C}{\bullet} \ar[ur, phantom, "\scalebox{1.5}{$\circlearrowleft$}"] \ar[r, "\id"] & \overset{\C}{\bullet}\\
\end{tikzcd},\quad
S(4,2)= \begin{tikzcd}[]
\overset{0}{\bullet} \ar[r, "0"] \ar[d, "0"]  & \overset{0}{\bullet} \ar[r, "0"] \ar[d, "0"] & \overset{0}{\bullet} \ar[d, "0"] \\
\overset{0}{\bullet} \ar[ur, phantom, "\scalebox{1.5}{$\circlearrowleft$}"] \ar[r, "0"] \ar[d, "0"] & \overset{0}{\bullet} \ar[ur, phantom, "\scalebox{1.5}{$\circlearrowleft$}"] \ar[r, "0"] \ar[d, "0"]  & \overset{0}{\bullet} \ar[d, "0"]\\
\overset{0}{\bullet} \ar[ur, phantom, "\scalebox{1.5}{$\circlearrowleft$}"] \ar[r, "0"] \ar[d, "0"] & \overset{0}{\bullet} \ar[ur, phantom, "\scalebox{1.5}{$\circlearrowleft$}"] \ar[r, "0"] \ar[d, "0"] & \overset{0}{\bullet} \ar[d, "0"]\\
\overset{0}{\bullet} \ar[ur, phantom, "\scalebox{1.5}{$\circlearrowleft$}"] \ar[r, "0"] & \overset{\C}{\bullet} \ar[ur, phantom, "\scalebox{1.5}{$\circlearrowleft$}"] \ar[r, "0"] & \overset{0}{\bullet}\\
\end{tikzcd},
\end{equation*}

that is, $P(1,1)=U^{(4,4,4)}, P(1,2)=U^{(0,4,4)}, I(4,1)=U^{(4,0,0)}, I(4,2)=U^{(4,4,0)},$ $ I(4,3)=U^{(4,4,4)}$ and $S(4,2)=U^{(0,1,0)}$.
Some examples of injective and simple indecomposables of $(\Gamma,I)$ that are not of the form $U^{(h_1,h_2,h_3)}$ are the following:

\begin{equation*}
\begin{comment}
P(2,2)=\begin{tikzcd}[]
\overset{0}{\bullet} \ar[r, "0"] \ar[d, "0"]  & \overset{0}{\bullet} \ar[r, "0"] \ar[d, "0"] & \overset{0}{\bullet} \ar[d, "0"] \\
\overset{0}{\bullet} \ar[ur, phantom, "\scalebox{1.5}{$\circlearrowleft$}"] \ar[r, "0"] \ar[d, "0"] & \overset{\C}{\bullet} \ar[ur, phantom, "\scalebox{1.5}{$\circlearrowleft$}"] \ar[r, "\id"] \ar[d, "\id"]  & \overset{\C}{\bullet} \ar[d, "\id"]\\
\overset{0}{\bullet} \ar[ur, phantom, "\scalebox{1.5}{$\circlearrowleft$}"] \ar[r, "0"] \ar[d, "0"] & \overset{\C}{\bullet} \ar[ur, phantom, "\scalebox{1.5}{$\circlearrowleft$}"] \ar[r, "\id"] \ar[d, "\id"] & \overset{\C}{\bullet} \ar[d, "\id"]\\
\overset{0}{\bullet} \ar[ur, phantom, "\scalebox{1.5}{$\circlearrowleft$}"] \ar[r, "0"] & \overset{\C}{\bullet} \ar[ur, phantom, "\scalebox{1.5}{$\circlearrowleft$}"] \ar[r, "\id"] & \overset{\C}{\bullet}\\
\end{tikzcd}, \quad
\end{comment}
I(2,2)= \begin{tikzcd}[]
\overset{\C}{\bullet} \ar[r, "\id"] \ar[d, "\id"]  & \overset{\C}{\bullet} \ar[r, "\id"] \ar[d, "\id"] & \overset{0}{\bullet} \ar[d, "0"] \\
\overset{\C}{\bullet} \ar[ur, phantom, "\scalebox{1.5}{$\circlearrowleft$}"] \ar[r, "\id"] \ar[d, "0"] & \overset{\C}{\bullet} \ar[ur, phantom, "\scalebox{1.5}{$\circlearrowleft$}"] \ar[r, "0"] \ar[d, "0"]  & \overset{0}{\bullet} \ar[d, "0"]\\
\overset{0}{\bullet} \ar[ur, phantom, "\scalebox{1.5}{$\circlearrowleft$}"] \ar[r, "0"] \ar[d, "0"] & \overset{0}{\bullet} \ar[ur, phantom, "\scalebox{1.5}{$\circlearrowleft$}"] \ar[r, "0"] \ar[d, "0"] & \overset{0}{\bullet} \ar[d, "0"]\\
\overset{0}{\bullet} \ar[ur, phantom, "\scalebox{1.5}{$\circlearrowleft$}"] \ar[r, "0"] & \overset{0}{\bullet} \ar[ur, phantom, "\scalebox{1.5}{$\circlearrowleft$}"] \ar[r, "0"] & \overset{0}{\bullet}\\
\end{tikzcd}, \quad
S(3,2)=\begin{tikzcd}[]
\overset{0}{\bullet} \ar[r, "0"] \ar[d, "0"]  & \overset{0}{\bullet} \ar[r, "0"] \ar[d, "0"] & \overset{0}{\bullet} \ar[d, "0"] \\
\overset{0}{\bullet} \ar[ur, phantom, "\scalebox{1.5}{$\circlearrowleft$}"] \ar[r, "0"] \ar[d, "0"] & \overset{0}{\bullet} \ar[ur, phantom, "\scalebox{1.5}{$\circlearrowleft$}"] \ar[r, "0"] \ar[d, "0"]  & \overset{0}{\bullet} \ar[d, "0"]\\
\overset{0}{\bullet} \ar[ur, phantom, "\scalebox{1.5}{$\circlearrowleft$}"] \ar[r, "0"] \ar[d, "0"] & \overset{\C}{\bullet} \ar[ur, phantom, "\scalebox{1.5}{$\circlearrowleft$}"] \ar[r, "0"] \ar[d, "0"] & \overset{0}{\bullet} \ar[d, "0"]\\
\overset{0}{\bullet} \ar[ur, phantom, "\scalebox{1.5}{$\circlearrowleft$}"] \ar[r, "0"] & \overset{\C}{\bullet} \ar[ur, phantom, "\scalebox{1.5}{$\circlearrowleft$}"] \ar[r, "0"] & \overset{0}{\bullet}\\
\end{tikzcd}.
\end{equation*}
\end{remark}

\begin{example}\label{ex:decompM}
    We recalled in Section \ref{sec:qGandschubvar} the representation $M$,
\begin{comment}
    \begin{equation*}
\begin{tikzcd}[sep=large]
\overset{\C}{\bullet} \ar[r, "\id"] \ar[d, "\iota_{2,1}"]  & \overset{\C}{\bullet} \ar[r, "\id"] \ar[d, "\iota_{2,1}"] & ...\ar[r, "\id"] \ar[d, "\iota_{2,1}"] & \overset{\C}{\bullet} \ar[d, "\iota_{2,1}"] \\
\overset{\C^2}{\bullet} \ar[ur, phantom, "\scalebox{1.5}{$\circlearrowleft$}"] \ar[r, "\id"] \ar[d, "\iota_{3,2}"] & \overset{\C^2}{\bullet} \ar[ur, phantom, "\scalebox{1.5}{$\circlearrowleft$}"] \ar[r, "\id"] \ar[d, "\iota_{3,2}"]  & ... \ar[ur, phantom,xshift=5, "\scalebox{1.5}{$\circlearrowleft$}"] \ar[r, "\id"] \ar[d, "\iota_{3,2}"] & \overset{\C^2}{\bullet} \ar[d, "\iota_{3,2}"] \\
... \ar[ur, phantom,xshift=5, "\scalebox{1.5}{$\circlearrowleft$}"] \ar[r, "\id"] \ar[d, "\iota_{n+1,n}"] & ... \ar[ur, phantom,xshift=5, "\scalebox{1.5}{$\circlearrowleft$}"] \ar[r, "\id"] \ar[d, "\iota_{n+1,n}"] & ... \ar[ur, phantom, xshift=5, "\scalebox{1.5}{$\circlearrowleft$}"] \ar[r, "\id"] \ar[d,yshift=5, "\iota_{n+1,n}"] & ... \ar[d, "\iota_{n+1,n}"]\\
\overset{\C^{n+1}}{\bullet} \ar[ur, phantom, "\scalebox{1.5}{$\circlearrowleft$}"] \ar[r, "\id"] & \overset{\C^{n+1}}{\bullet} \ar[ur, phantom, "\scalebox{1.5}{$\circlearrowleft$}"] \ar[r, "\id"] & ... \ar[ur, phantom, xshift=5,yshift=5, "\scalebox{1.5}{$\circlearrowleft$}"] \ar[r, "\id"] & \overset{\C^{n+1}}{\bullet}\\
\end{tikzcd}
\end{equation*}
\end{comment}
which is the element of $R^{\iota}_{\bf d}$ that corresponds to the choice $f_{n+1}^1=f_{n+1}^2=\dots=f_{n+1}^{n-1}=\id$. Its decomposition is $M= U^{(n+1,\dots,n+1)} \oplus U^{(n,\dots,n)} \oplus U^{(n-1,\dots,n-1)} \oplus \dots \oplus U^{(2,\dots,2)} \oplus U^{(1,\dots,1)} = P(1,1)\oplus P(2,1) \oplus \dots \oplus P(n,1) \oplus P(n+1,1)$. Since it is a direct sum of projective $(\Gamma,I)$-representations, $M$ is a projective $(\Gamma,I)$-representation as well.
\end{example}

\begin{theorem}\label{thm:indecompU}
    All representations in $R^{\iota}_{\bf d}$ can be decomposed as direct sums of the indecomposable $(\Gamma,I)$-representations $U^{(h_1,\dots,h_n)}$.
\end{theorem}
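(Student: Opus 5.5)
The plan is to prove the statement by simultaneous base change. I will show that every $M^f\in R^{\iota}_{\bf d}$ is isomorphic, via an element of $\prod_{j=1}^{n} B_{n+1}$ acting on the bottom row, to a representation $M^{g}$ in which every $g_j:=g^j_{n+1}$ is an upper-triangular \emph{partial permutation matrix}, i.e. a $0/1$ matrix with at most one nonzero entry in each row and column. The decomposition is then read off from $g$.

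First I record why such base changes are legitimate. An element $h_j\in B_{n+1}$ preserves the standard flag $\C^1\subseteq\C^2\subseteq\dots\subseteq\C^{n+1}$. So its restrictions $h_j|_{\C^i}$ give automorphisms at all vertices $(i,j)$ of the $j$-th column, and these commute with the inclusions $\iota_{i+1,i}$. Hence $(h_1,\dots,h_n)$ defines an isomorphism of $(\Gamma,I)$-representations $M^f\cong M^{h\op f}$, where $(h\op f)_j=h_{j+1}f_jh_j^{-1}$. By Remark~\ref{remark:subvariety}, all horizontal maps $f^j_i$ are the upper-left $i\times i$ blocks of $f_j$, so nothing else needs to be checked.

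Second, assuming the normal form, I build the summands. Each $g_j$ sends a standard basis vector $b_p$ either to $0$ or to some $b_q$ with $q\le p$, by upper-triangularity. Consider the directed graph on the pairs $(p,j)$, with an edge $(p,j)\to(q,j+1)$ whenever $g_j b_p=b_q$. Since $g_j$ is a partial permutation, this graph is a disjoint union of paths (strings). A maximal string occupying columns $j_0,\dots,j_1$ with indices $p_{j_0}\ge p_{j_0+1}\ge\dots\ge p_{j_1}$ spans, inside column $j$, the vector $b_{p_j}\in\C^i$ for all $i\ge p_j$. This subrepresentation is exactly $U^{(h_1,\dots,h_n)}$ with $h_j=n+2-p_j$ on the support and $h_j=0$ elsewhere. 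The monotonicity $p_j\ge p_{j+1}$ is precisely the condition $h_j\le h_{j+1}$ required in the definition of $U^{(h_1,\dots,h_n)}$. Every basis vector of every $M_{i,j}=\C^i=\langle b_1,\dots,b_i\rangle$ lies in exactly one string, so $M^g$ is the direct sum of these subrepresentations. Indecomposability of each $U^{(h)}$ follows because its endomorphism ring is $\C$: the support is connected and all maps on it are identities, so an endomorphism is a single scalar.

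Third, and this is the main obstacle, I must reach the normal form for a whole chain of upper-triangular matrices simultaneously. For a single $f_j$, the two-sided $B_{n+1}\times B_{n+1}$-action reduces an upper-triangular matrix to an upper-triangular partial permutation. This is Gaussian elimination that only adds rows upward and columns rightward, as in the north-west rank description of \cite{miller2005matrix}. For a chain, the difficulty is that $h_{j}$ is shared by $f_{j-1}$ and $f_j$. I would proceed by induction on the number of columns, normalizing from left to right. Suppose $g_1,\dots,g_{j-1}$ are already partial permutations; I then restrict to those $h_j$ that, together with a compensating $h_{j-1},\dots,h_1$, preserve this form.

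Concretely, I would run the elimination on $f_j$ column by column, scanning pivots from the bottom-left. When an operation on the source side of $f_j$ (right multiplication by $h_j^{-1}$) is needed, I would check that the corresponding change of basis at column $j$ is of the form "add $b_p$ to $b_{p'}$ with $p<p'$". Its effect on $g_{j-1}$ can then be undone by an analogous upper-triangular operation propagated leftwards along the strings ending at $b_{p'}$ and $b_p$. The hoped-for key point is that a string ending at $b_{p'}$ can absorb a combination of a string ending at $b_p$ with $p<p'$ without leaving upper-triangularity. This works because along strings the indices decrease, so the needed operations remain "upward".

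If this bookkeeping becomes unwieldy, my fallback is a more conceptual route. I would show that at each column the subspaces $\C^i$, $\im(f_{j-1}\cdots f_k)$, $\ker(f_{l}\cdots f_j)$ and their images and preimages of flag members generate a distributive lattice. Such a lattice admits a common adapted basis, and that basis yields the strings directly.
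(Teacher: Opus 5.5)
Your Steps~1 and~2 are sound. $B$-base change gives isomorphisms inside $R^{\iota}_{\bf d}$, and an upper-triangular partial-permutation normal form does split $M^g$ into strings, i.e.\ into summands $U^{(h_1,\dots,h_n)}$. The gap is Step~3, and no bookkeeping can close it, because a simultaneous normal form does not exist in general. Already for $n+1=4$, let $f_1b_3=b_2+b_3$ and $f_1b_k=0$ for $k\neq 3$, and let $f_2b_2=f_2b_3=b_2$ and $f_2b_1=f_2b_4=0$; both matrices are upper triangular. In $M^f_{3,2}=\langle b_1,b_2,b_3\rangle$ consider three subspaces: $\im(\iota_{3,2})=\langle b_1,b_2\rangle$, the line $\im(f^1_3)=\langle b_2+b_3\rangle$, and the plane $\ker(f^2_3)=\langle b_1,b_2-b_3\rangle$. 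Suppose $g=h\op f$ were in normal form. Then $h_2$ would carry these three subspaces to $\langle b_1,b_2\rangle$, $\im(g^1_3)$ and $\ker(g^2_3)$, which are coordinate subspaces. So all three would be spanned by subsets of the basis $v_p=h_2^{-1}b_p$, $p=1,2,3$. Only $v_3$ lies outside $\langle b_1,b_2\rangle=\langle v_1,v_2\rangle$. Hence the line $\im(f^1_3)$ would equal $\langle v_3\rangle$, and the plane $\ker(f^2_3)$, which is not contained in $\langle b_1,b_2\rangle$, would contain $v_3$. This forces $b_2+b_3\in\ker(f^2_3)$, contradicting $f_2(b_2+b_3)=2b_2$. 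Your fallback fails for the same reason. The planes $\langle b_1,b_2\rangle$, $\im(\iota_{3,2}\iota_{2,1})+\im(f^1_3)=\langle b_1,b_2+b_3\rangle$ and $\ker(f^2_3)$ are three distinct planes through the common line $\langle b_1\rangle$, so the subspace lattice at $(3,2)$ is not distributive.

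So the statement itself is false for every $n+1\geq 4$: keep these $f_1,f_2$ and choose the remaining maps arbitrarily. Images and kernels of paths split along direct sums and are transported by isomorphisms. Therefore any decomposition of $M^f$ into summands that are at most one-dimensional at $(3,2)$, in particular into $U^{(h)}$'s, would produce a basis of $M^f_{3,2}$ with the same property, and hence the same contradiction. Thus $M^f$ has an indecomposable summand of dimension at least $2$ at $(3,2)$; modulo $\langle b_1\rangle$ one sees three distinct lines in a plane, as for the non-thin indecomposable of type $D_4$. The paper's own proof does not get around this. It decomposes each row as an $\mathbb{A}_n$-representation and asserts that commutativity makes these decompositions glue along the vertical inclusions, which is precisely the compatibility that fails here. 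Your argument is complete only for $n+1=3$, where there is a single matrix and Step~3 reduces to the one-matrix normal form.
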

\begin{proof}
    First, we observe that the $(\Gamma,I)$-representations $U^{(h_1,\dots,h_n)}$ are indecomposable, because they are thin and all linear maps connecting two one-dimensional vector spaces are identity maps (the only proper subspace of a one-dimensional vector space is the trivial subspace, which is the domain or codomain of only the zero map).
    To show that all representations in $R^{\iota}_{\bf d}$ can be decomposed as their direct sums, we recall from \eqref{eq:ftuple} that each representation $M^f$ in $R^{\iota}_{\bf d}$ is determined by the choice of $f=(f_{n+1}^1,\dots,f_{n+1}^{n-1})$ in $\prod_{j=1}^{n-1} U_{n+1}$: every other linear map $f_{i}^j$ corresponding to the horizontal arrow $(i,j)\to (i,j+1)$ of $(\Gamma, I)$ is represented by the appropriate submatrix of $f_{n+1}^j$ (i.e., it is the restriction of $f_{n+1}^j$ to $\C^i$), while all maps corresponding to vertical arrows of $(\Gamma, I)$ are fixed as the standard inclusions $\C^i \hookrightarrow \C^{i+1}$.
    In other words, any indecomposable representation appearing in the decomposition of $M^f$ must be a representation of $(\Gamma, I)$ whose linear maps associated to the vertical arrows are either zero or identity maps and such that, if the linear map associated to a horizontal arrow $(i,j)\to (i,j+1)$ is zero, then all linear maps associated to arrows of the form $(i,k)\to (i,k+1)$ for $k>l$ are zero.
    Then, we consider the subquivers of $(\Gamma,I)$ that start at $(i,1)$ and end at $ (i,n)$, for $i=1,\dots,n+1$ (i.e., each row of $(\Gamma,I)$ is considered as a subquiver). These are equioriented Dynkin quivers of type $\mathbb{A}_n$, whose indecomposables $U_{a,b}$ are thin and given by connected intervals $[a,b]$ with $1\leq a \leq b \leq n$.
    The statement then follows by applying this description of the indecomposables $U_{a,b}$ to all the considered subquivers of $(\Gamma,I)$, starting from the topmost row downwards; the definition of each $f_j^i$ as the restriction of $f_{n+1}^j$ to $\C^i$ ensures the commutativity of all arising square diagrams.
\end{proof}

Our goal now is to find a way to parametrise the representations in $R^{\iota}_{\bf d}$ or, more precisely, to parametrise their isomorphism classes. We recall from Lemma \ref{lemma:isomGL} that the isomorphism classes of the representations of a given quiver coincide with the orbits under the action of $ G_{\bf d}=\prod_{i\in Q_0} \GL_{d_i}(\K)$, defined on the variety $R_{\bf d}$ as
\begin{equation*}
    g \op M= g_{t(\alpha)} M_{\alpha} g^{-1}_{s(\alpha)}
\end{equation*}
for $g \coloneqq (g_i)_{i\in Q_0} \in G_{\bf d}$. The action of $G_{\bf d}$, however, is not compatible with the restrictions we impose on the $(\Gamma,I)$-representations when we consider the subvariety $R^{\iota}_{\bf d}$ of $R_{\bf d}$: for $M^f \in R^{\iota}_{\bf d}$, the representation $g \op M^f \in R_{\bf d}$ is isomorphic to $M^f$ but is not, in general, an element of $R^{\iota}_{\bf d}$.
This is because the linear maps associated to the vertical arrows of $g \op M^f$ are not, in general, of the form $\iota_{i+1,i}$, the standard inclusion of $\C^i$ into $\C^{i+1}$ (see Definition \ref{def:subvariety} for the conditions defining $R^{\iota}_{\bf d}$). For this reason, we consider instead the action of the maximal subgroup of $G_{\bf d}$ that is compatible with the definition of the subvariety $R^{\iota}_{\bf d}$:

\begin{definition}\label{def:Baction}
    Given a representation $M^f = (f_{n+1}^1,\dots,f_{n+1}^{n-1}) \in R^{\iota}_{\bf d}$, we denote by $\orb^{\iota}_{M^f}$ the orbit of $M^f$ under the action of $G^{\iota}_{\bf d}\coloneqq \prod_{i=1}^{n} B_{n+1}$, where $B_{n+1}$ is the group of invertible upper-triangular matrices of size $n+1$, defined as
    \begin{equation*}
    h \op M^f= (h_2 f_{n+1}^1 h_1^{-1}, h_3 f_{n+1}^2 h_2^{-1}, \dots, h_n f_{n+1}^{n-1} h_{n-1}^{-1})
\end{equation*}
for some $h \in G^{\iota}_{\bf d}$.
\end{definition}

Now we can reformulate our goal: we want to parametrise the orbits $\orb^{\iota}_{M^f}$ under the action of $G^{\iota}_{\bf d}$ or, equivalently, we want to parametrise the classes of representations in $R^{\iota}_{\bf d}$ that are isomorphic via a morphism in $\prod_{i=1}^{n} B_{n+1}$. We denote such isomorphisms by
\begin{equation}
    M^f \stackrel{B}{\cong} M^{g},
\end{equation}
and say that $f$ and $g$ are in the same $B$-isomorphism class, or $B$-orbit, of representations in $R^{\iota}_{\bf d}$.

\begin{remark}\label{remark:actionrestriction}
As shown in Remark \ref{remark:subvariety}, the linear maps $f_{i}^j$ of a representation $M^f = (f_{n+1}^1,\dots,f_{n+1}^{n-1}) \in R^{\iota}_{\bf d}$, for $i<n+1$, are the restrictions of $f_{n+1}^j$ to $\C^i$.
This means that, if we denote $M^{g}\coloneqq h\op M^f$ for some $h=(h_1,\dots,h_n) \in G^{\iota}_{\bf d}$, the linear maps $g_{i}^j$ of $M^{g}$ for $i<n+1$ are the restrictions of $g_{n+1}^j$ to $\C^i$.
In other words, the action of $h$ on $M^f$ on the linear maps $(f_{i}^1, \dots,f_{i}^{n-1})$, for a fixed $i<n+1$, is given by the action of the restrictions to $\C^i$ of the fixed maps $(h_1,\dots,h_n)$ on $(f_i^1, \dots,f_i^{n-1})$. We clarify this through the following example.
\end{remark}

\begin{example}\label{ex:actiondim3}
Consider the quiver $(\Gamma,I)$ for $n+1=3$ and its representation $M^f$ for some $f=(f_{3}^1)=\begin{bsmallmatrix}
        a & b & d\\
        0 & c & e\\
        0 & 0 & l
    \end{bsmallmatrix}\in U_{3}$:
\begin{equation*}
M^f:
\begin{tikzcd}[sep=large, ampersand replacement=\&]
\overset{\C}{\bullet} \ar[r, "\begin{bsmallmatrix}
        a
    \end{bsmallmatrix}"] \ar[d, "\iota_{2,1}"]  \& \overset{\C}{\bullet} \ar[d, "\iota_{2,1}"] \\
\overset{\C^2}{\bullet} \ar[ur,phantom, "\scalebox{1.5}{$\circlearrowleft$}"] \ar[r, "{\begin{bsmallmatrix}
        a & b\\
        0 & c
    \end{bsmallmatrix}}"] \ar[d, "\iota_{3,2}"] \& \overset{\C^2}{\bullet}  \ar[d, "\iota_{3,2}"] \\
\overset{\C^3}{\bullet} \ar[ur, phantom,yshift=5, "\scalebox{1.5}{$\circlearrowleft$}"] \ar[r, "{\begin{bsmallmatrix}
        a & b & d\\
        0 & c & e\\
        0 & 0 & l
    \end{bsmallmatrix}}"] \& \overset{\C^3}{\bullet}
\end{tikzcd}.
\end{equation*}
The orbit of $M^f$ under the action of $G^{\iota}_{\bf d}=B_3\times B_3$ is
\begin{equation*}
    \orb^{\iota}_{M^f}=\{h_2 f_{3}^1 h_1^{-1} | (h_1,h_2)\in B_3\times B_3 \}
\end{equation*}
and, if we denote $h_1=\begin{bsmallmatrix}
        x_1 & x_2 & x_3\\
        0 & x_4 & x_5\\
        0 & 0 & x_6
    \end{bsmallmatrix}$ and $h_2=\begin{bsmallmatrix}
        y_1 & y_2 & y_3\\
        0 & y_4 & y_5\\
        0 & 0 & y_6
    \end{bsmallmatrix}$, then the representation $M^{g}\coloneqq h \op M^f$ is
    \begin{equation*}
    M^{g}=
\begin{tikzcd}[sep=large, ampersand replacement=\&]
\overset{\C}{\bullet} \ar[r, "g_{1}^1"] \ar[d, "\iota_{2,1}"]  \& \overset{\C}{\bullet} \ar[d, "\iota_{2,1}"] \\
\overset{\C^2}{\bullet} \ar[ur,phantom, "\scalebox{1.5}{$\circlearrowleft$}"] \ar[r, "g_{2}^1"] \ar[d, "\iota_{3,2}"] \& \overset{\C^2}{\bullet}  \ar[d, "\iota_{3,2}"] \\
\overset{\C^3}{\bullet} \ar[ur, phantom,yshift=5, "\scalebox{1.5}{$\circlearrowleft$}"] \ar[r,"g_{3}^1"] \& \overset{\C^3}{\bullet}
\end{tikzcd}
\end{equation*}
where
\begin{equation*}
g_{1}^1=\begin{bsmallmatrix}
        y_1
    \end{bsmallmatrix}\begin{bsmallmatrix}
        a
    \end{bsmallmatrix}\begin{bsmallmatrix}
        x_1
    \end{bsmallmatrix}^{-1},\quad g_{2}^1=\begin{bsmallmatrix}
        y_1 & y_2\\
        0 & y_4 
    \end{bsmallmatrix}\begin{bsmallmatrix}
        a & b\\
        0 & c
    \end{bsmallmatrix}\begin{bsmallmatrix}
        x_1 & x_2\\
        0 & x_4 \\
    \end{bsmallmatrix}^{-1},
    \end{equation*}
    \begin{equation*}
    g_{3}^1=\begin{bsmallmatrix}
        y_1 & y_2 & y_3\\
        0 & y_4 & y_5\\
        0 & 0 & y_6
    \end{bsmallmatrix}\begin{bsmallmatrix}
        a & b & d\\
        0 & c & e\\
        0 & 0 & l
    \end{bsmallmatrix}\begin{bsmallmatrix}
        x_1 & x_2 & x_3\\
        0 & x_4 & x_5\\
        0 & 0 & x_6
    \end{bsmallmatrix}^{-1}.
    \end{equation*}
\end{example}

\begin{remark}\label{remark:effectofBaction}
    The row (or column) echelon form of a matrix $f\in U_{n+1}$ can be obtained via the action of $h\in B_{n+1}\times B_{n+1}$ given, as in Definition \ref{def:Baction}, by $h\op f = h_2 f h_1^{-1}$: the effect of this action is ``sweeping upwards'' and ``sweeping to the right'' of the pivots of $f$, allowing us to transform $f$ simultaneously into its reduced row and column echelon form. We consider then this form as the standard representative for the orbit of $f$: it is an upper-triangular matrix whose entries are all equal to 0, except for at most one entry equal to 1 in each row and column. Such matrices (in general, not upper-triangular) are known as \Def{partial permutation matrices}, a term employed, for instance, in \cite{knutson2005grobner} and \cite{miller2005matrix}.
\end{remark}

\begin{example}\label{ex:15reps}
    For $n+1=3$, the standard representatives of the orbits under the action given in Definition \ref{def:Baction} are the following matrices:
    \begin{equation*}
        f^1=\begin{bsmallmatrix}
        1 & 0 & 0 \\
        0 & 1 & 0 \\
        0 & 0 & 1
    \end{bsmallmatrix}, f^2=\begin{bsmallmatrix}
        1 & 0 & 0 \\
        0 & 1 & 0 \\
        0 & 0 & 0
    \end{bsmallmatrix}, f^3=\begin{bsmallmatrix}
        1 & 0 & 0 \\
        0 & 0 & 0 \\
        0 & 0 & 1
    \end{bsmallmatrix}, f^4=\begin{bsmallmatrix}
        0 & 0 & 0 \\
        0 & 1 & 0 \\
        0 & 0 & 1
    \end{bsmallmatrix}, f^5=\begin{bsmallmatrix}
        1 & 0 & 0 \\
        0 & 0 & 1 \\
        0 & 0 & 0
    \end{bsmallmatrix},
    \end{equation*}
    \begin{equation*}
    f^6=\begin{bsmallmatrix}
        0 & 0 & 1 \\
        0 & 1 & 0 \\
        0 & 0 & 0
    \end{bsmallmatrix},
     f^7=\begin{bsmallmatrix}
        0 & 1 & 0 \\
        0 & 0 & 1 \\
        0 & 0 & 0
    \end{bsmallmatrix},
    f^8=\begin{bsmallmatrix}
        0 & 1 & 0 \\
        0 & 0 & 0 \\
        0 & 0 & 1
    \end{bsmallmatrix},
     f^9=\begin{bsmallmatrix}
        1 & 0 & 0 \\
        0 & 0 & 0 \\
        0 & 0 & 0
    \end{bsmallmatrix},
     f^{10}=\begin{bsmallmatrix}
        0 & 0 & 0 \\
        0 & 1 & 0 \\
        0 & 0 & 0
    \end{bsmallmatrix},
    \end{equation*}
    \begin{equation*}
        f^{11}=\begin{bsmallmatrix}
        0 & 0 & 0 \\
        0 & 0 & 0 \\
        0 & 0 & 1
    \end{bsmallmatrix},
    f^{12}=\begin{bsmallmatrix}
        0 & 0 & 0 \\
        0 & 0 & 1 \\
        0 & 0 & 0
    \end{bsmallmatrix},
    f^{13}=\begin{bsmallmatrix}
        0 & 0 & 1 \\
        0 & 0 & 0 \\
        0 & 0 & 0
    \end{bsmallmatrix},
    f^{14}=\begin{bsmallmatrix}
        0 & 1 & 0 \\
        0 & 0 & 0 \\
        0 & 0 & 0
    \end{bsmallmatrix},  
    f^{15}=\begin{bsmallmatrix}
        0 & 0 & 0 \\
        0 & 0 & 0 \\
        0 & 0 & 0
    \end{bsmallmatrix}.
    \end{equation*}
    These represent the 15 orbits of the form
\begin{equation*}
    \orb^{\iota}_{M^f}=\{h_2 f_{3}^1 h_1^{-1} | (h_1,h_2)\in B_3\times B_3 \},
\end{equation*}
where $f_{3}^1\in U_3$ is the linear map defining the representation $M^f$ of Example \ref{ex:actiondim3}.
\end{example}

\begin{remark}\label{remark:numberoforbits}
     Because of the definition of $(\Gamma, I)$ (see Section \ref{sec:qGandschubvar}), considering orbits of the form given in \ref{ex:15reps} only makes sense in dimension $n+1=3$: for $n+1=2$ the quiver is not defined and, for $n+1>3$, the $(\Gamma, I)$-representations consist of sequences of more than one linear map. However, computing how many such orbits exist for a generic dimension $n+1$ is straightforward: the number of orbits $\orb^{\iota}_{M^f}$ in dimension $n+1$, where $M^f$ is determined by one $n+1 \times n+1$ upper-triangular matrix, is the $n+2$-th Bell number, defined recursively by
\begin{equation*}
    b_{n+2}=\sum_{k=0}^{n+1} \binom{n+1}{k} b_{k}.
\end{equation*}
This number is obtained by counting (recursively) the possible configurations of pivots of an $n+1 \times n+1$ upper-triangular matrix of any rank between 0 and $n+1$.
\end{remark}

\subsection[Two parametrisations]{Two parametrisations for the representations in $R^{\iota}_{\bf d}$}\label{sec:parametrisations}

Differently from Example \ref{ex:15reps}, understanding and counting the orbits $\orb^{\iota}_{M^f}$ in a generic dimension is a more complex question: the base changes performed by the action are not independent of one another, because the linear maps of a representation $M^f$ share source and target vertices.

In order to prove that there are finitely many orbits $\orb^{\iota}_{M^f}$ under the action given in Definition \ref{def:Baction} and to parametrise such orbits, we recall a result for the Dynkin quivers of type $\mathbb{A}_m$ (with any orientation) presented in \cite{abeasis1985degenerations}.
Let $Q_m$ be a quiver of type $\mathbb{A}_m$. Consider a $Q_m$-representation $A=(A_1,\dots,A_{m-1})$ and any pair of indices $u,v$ such that $1\leq u \leq v\leq m$. Let $\varphi^A_{uv}$ denote the linear map going from the direct sum of the vector spaces relative to all the sources to the one relative to all the sinks between $u$ and $v$ (included $u$ and $v$) whose components are
\begin{equation*}
\begin{aligned}
 V_{s_{t-1}}\oplus V_{s_{t+1}} & \to V_{s_t}\\
 (z,z') & \mapsto (\Bar{A}_{t-1,t}(z)-\Bar{A}_{t+1,t}(z'))
\end{aligned}
\end{equation*}
where $\Bar{A}_{pt}$, for $p=t-1,t+1$, is the composition of all the maps $A_i$ going from the sources $s_{t-1}$ or $s_{t+1}$ to the sink $s_t$.

\begin{theorem}\cite[Proposition 2.7]{abeasis1985degenerations}\label{thm:paramtypeA}
    The orbits of the $Q_m$-representations $A$ under the action of $G_{\bf d}=\prod_{i=1}^m \GL_{d_i}(\K)$ are parametrised by the sets of non-negative integers $N^A=\{N^A_{uv} \}_{1\leq u\leq v \leq m}$ defined as
    \begin{equation*}
    \begin{cases}
         N^A_{uv}\coloneqq \rk \varphi^A_{uv} & \text{ if } u<v\\
         N^A_{uv}\coloneqq \dim V_u & \text{ if } u=v
    \end{cases}.
    \end{equation*}
\end{theorem}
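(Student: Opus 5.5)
Since this is the classical result of Abeasis and Del Fra for type $\mathbb{A}_m$ quivers of arbitrary orientation, I would prove it by combining the Gabriel classification of indecomposables with a multiplicity count expressed through the ranks $\rk\varphi^A_{uv}$. By Gabriel's theorem, the indecomposable $Q_m$-representations are exactly the thin interval modules $U_{[a,b]}$ for $1\leq a\leq b\leq m$. By Krull--Schmidt together with Lemma \ref{lemma:isomGL}, a $G_{\bf d}$-orbit is therefore determined by the multiplicities $\mu_{ab}(A)$ of the $U_{[a,b]}$ in $A$. It then suffices to show two things: the numbers $N^A_{uv}$ are orbit invariants, and the $\mu_{ab}$ can be recovered from the $N^A_{uv}$.

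Invariance is immediate. If $A'=g\op A$, then $\varphi^{A'}_{uv}$ equals $\varphi^A_{uv}$ composed on the left with the block-diagonal invertible map formed by the $g_{s_t}$ at the sinks, and on the right with the inverse of the block-diagonal map formed by the $g$ at the sources. Hence the rank is unchanged, and $\dim V_u$ is trivially unchanged.

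For the recovery step, note that $\varphi^A_{uv}$ is additive: $\varphi^{A\oplus A'}_{uv}=\varphi^A_{uv}\oplus\varphi^{A'}_{uv}$, so $N_{uv}$ is additive in direct sums. It is then enough to compute $N_{uv}(U_{[a,b]})$ for an interval module. Restricted to the subquiver $[u,v]$, the module $U_{[a,b]}$ becomes the interval module on $[a,b]\cap[u,v]$, with identity maps along it. The source-to-sink map $\varphi_{uv}$ is then the incidence-type map of a path graph, sending the sources in the interval to the sinks in the interval with entries $\pm1$. A direct count gives its rank as $\#\{\text{sources in } I\}$ if $[u,v]\subseteq[a,b]$, and as $\#\{\text{sources in } I\}-1$ otherwise, where $I=[a,b]\cap[u,v]$. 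The reason is that the alternating boundary map on a connected path has one-dimensional kernel precisely when the path is cut off at an end that is a source and not at the ends of $[u,v]$. Equivalently, one checks that $\dim\ker\varphi_{uv}=\dim\Hom(U_{[u,v]},-)$ applied suitably, and then
\[
N_{uv}(A)=\sum_{t\ \text{source in}\ [u,v]}\dim V_t-\dim\Hom_{Q}(U_{[u,v]},A)\quad(\text{up to the orientation-dependent bookkeeping}).
\]

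This shows that knowing all $N_{uv}$, together with the dimensions $N_{uu}=\dim V_u$, is equivalent to knowing the numbers $\dim\Hom(U_{[u,v]},A)$ for all intervals. Alternatively, one can use the kernel of $\varphi_{uv}$, which is identified with the space of compatible families, that is, with $\Hom$ from the indecomposable projective/interval. Since the matrix $\bigl(\dim\Hom(U_{[u,v]},U_{[a,b]})\bigr)$ is unitriangular with respect to a suitable ordering of intervals (inclusion, refined by orientation), it is invertible over $\mathbb{Z}$. Inverting it recovers the $\mu_{ab}$, which gives injectivity of $A\mapsto N^A$ on orbits.

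The main obstacle is the orientation-dependent bookkeeping. Concretely, this means making the identification between $\ker\varphi_{uv}$ (or its rank) and a $\Hom$-dimension precise for an arbitrary orientation, including the cases where $u$ or $v$ is itself a source or a sink. I would handle this by first treating the equioriented case, where $\varphi_{uv}$ is just the composite $A_{v-1}\cdots A_u$ and its rank counts intervals containing $[u,v]$. For the general case I would argue inductively on the number of orientation changes, checking that the unitriangularity survives.
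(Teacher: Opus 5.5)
The paper does not prove this statement; it quotes it from Abeasis--Del Fra. Your proposal therefore has to stand on its own, and it has a genuine gap at the central step.

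\textbf{What works.} Gabriel's theorem and Krull--Schmidt reduce orbits to multiplicity vectors $(\mu_{ab})$. Invariance ($\varphi^{g\op A}_{uv}$ is $\varphi^A_{uv}$ twisted by invertible block-diagonal maps) and additivity then show that $N^A$ is an orbit invariant depending linearly on $(\mu_{ab})$.

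\textbf{The gap.} The content of the theorem is injectivity: the $m(m+1)/2$ vectors $N^{U_{[a,b]}}$ must be linearly independent. This is exactly the step you defer to ``bookkeeping'', and both formulas you offer for it are false.

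\emph{The rank formula.} On $U_{[a,b]}$, the map $\varphi_{uv}$ is the signed incidence matrix of the path formed by the turning points of $[u,v]$ that lie in $I=[a,b]\cap[u,v]$. Its rank is the number of sources of $[u,v]$ in $I$, minus one exactly when that path is nonempty and begins and ends at a source. Whether $[u,v]\subseteq[a,b]$ plays no role.
\begin{itemize}
\item For $1\to 2\leftarrow 3$, $(u,v)=(1,3)$ and $A=U_{[1,3]}$, the map is $\varphi_{13}(z,z')=z-z'$, of rank $1$, whereas your rule gives $2$.
\item For $1\to 2$, $(u,v)=(1,2)$ and $A=U_{[2,2]}$, your rule gives $-1$.
\end{itemize}

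\emph{The Hom identity.} It is not true that $\ker\varphi^A_{uv}\cong\Hom_Q(U_{[u,v]},A)$, and this fails already for the equioriented quiver $1\to 2$. There $\ker\varphi_{12}=\ker A_1\cong\Hom_Q(U_{[1,1]},A)$. For $A=U_{[1,2]}$ your displayed identity gives $N_{12}=1-1=0$ instead of $1$.

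\textbf{What is actually needed.} First, $\ker\varphi^A_{uv}$ is the space of compatible families on $[u,v]$ that vanish at those endpoints which are sinks of $[u,v]$. Hence $\ker\varphi^A_{uv}\cong\Hom_Q(U_{J(u,v)},A)$, where $J(u,v)$ is obtained from $[u,v]$ as follows:
\begin{itemize}
\item delete each endpoint that is a sink of $[u,v]$;
\item prolong each endpoint that is a source of $[u,v]$ along the maximal run of arrows pointing away from $[u,v]$.
\end{itemize}
In the equioriented case this gives $J(u,v)=[u,v-1]$, which is why your special case works. Together with $N_{uu}=\dim\Hom_Q(P(u),A)$, you must then prove a combinatorial lemma: the intervals $J(u,v)$ for $u<v$, together with the supports of the $P(u)$, are exactly the $m(m+1)/2$ intervals, each occurring once. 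Only then does $N^A$ determine $\dim\Hom_Q(X,A)$ for every indecomposable $X$. Auslander's theorem, or unitriangularity of the Hom-matrix, then finishes the proof.

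\textbf{The ordering.} The order that makes the Hom-matrix unitriangular must come from the Auslander--Reiten quiver, not from inclusion. For $1\to 2$, both $\Hom(U_{[2,2]},U_{[1,2]})$ and $\Hom(U_{[1,2]},U_{[1,1]})$ are nonzero, so $[2,2]$ and $[1,1]$ must lie on opposite sides of $[1,2]$.

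\textbf{The proposed induction.} Your suggested induction on the number of orientation changes does not close the gap either. You give no mechanism relating the ranks $\rk\varphi_{uv}$ for two different orientations.
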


Let us now return to the question of parametrising the orbits $\orb^{\iota}_{M^f}$ under the action given in Definition \ref{def:Baction}. First, we define the following sequence of non-negative integers:

\begin{definition}\label{def:rankvector}
    Given a representation $M^f \in R^{\iota}_{\bf d}$, the \Def{rank vector} of $M^f$ is ${\bf r}^f\coloneqq ((r^f_{l,s}),(r_{ij_1j_2k}^f))$, for $l,i=1,\dots,n+1$, $s=1,\dots,n$, $k=1,\dots,i+1$ and $1\leq j_1 \leq j_2 \leq n-1$, where $r^f_{l,s}\coloneqq \dim(M^f_{l,s})$ and
    \begin{equation}
        \begin{cases}
            r_{ij_1j_2k}^f \coloneqq \dim(\im (f_{i}^{j_2}\circ\cdots\circ f_{i}^{j-1}) \cap \im(\iota_{i,i-1}\circ\dots\circ\iota_{k,k-1})) \text{ for } k=1,\dots,i \\
            r_{ij_1j_2k}^f \coloneqq \dim(\im (f_{i}^{j_2}\circ\cdots\circ f_{i}^{j-1}) \cap \C^k) \text{ for } k=i+1
        \end{cases}.
    \end{equation}
\end{definition}

In words, the first part of ${\bf r}^f$ is the dimension vector of $M^f$, while the information encoded in the second part of the rank vector of $M^f$ consists of the dimensions of all possible intersections of the images of the linear maps that determine $M^f$. When $k=i+1$, the number $r_{ij_1j_2k}^f$ is exactly the rank of $f_{i}^{j_2}\circ\cdots\circ f_{i}^{j-1}$.

\begin{example}\label{ex:map011}
    The rank vector associated to the following $(\Gamma,I)$-representation:
    \begin{equation*}
M^f:
\begin{tikzcd}[sep=large, ampersand replacement=\&]
\overset{\C}{\bullet} \ar[r, "\begin{bsmallmatrix}
        0
    \end{bsmallmatrix}"] \ar[d, "\iota_{2,1}"]  \& \overset{\C}{\bullet} \ar[d, "\iota_{2,1}"] \\
\overset{\C^2}{\bullet} \ar[ur,phantom, "\scalebox{1.5}{$\circlearrowleft$}"] \ar[r, "{\begin{bsmallmatrix}
        0 & 0\\
        0 & 1
    \end{bsmallmatrix}}"] \ar[d, "\iota_{3,2}"] \& \overset{\C^2}{\bullet}  \ar[d, "\iota_{3,2}"] \\
\overset{\C^3}{\bullet} \ar[ur, phantom,yshift=5, "\scalebox{1.5}{$\circlearrowleft$}"] \ar[r, "{\begin{bsmallmatrix}
        0 & 0 & 0\\
        0 & 1 & 0\\
        0 & 0 & 1
    \end{bsmallmatrix}}"] \& \overset{\C^3}{\bullet}
\end{tikzcd}
\end{equation*}
is ${\bf r}^f=(\dim(\im (f_{1}^1) \cap \C^1), \dim(\im (f_{2}^1) \cap \C^1),\dim(\im (f_{2}^1) \cap \C^2),\dim(\im (f_{3}^1) \cap \C^1), \dim(\im (f_{3}^1) \cap \C^2), \dim(\im (f_{3}^1) \cap \C^3))=(0,0,1,0,1,2)$.
\end{example}

In order to show that the rank vectors parametrise the orbits $\orb^{\iota}_{M^f}$, we need the following technical Lemma:

\begin{lemma}\label{lemma:indipvectors}
    The rank vectors of the indecomposable $(\Gamma,I)$-representations of the form $U^{(h_1,\dots,h_n)}$, defined in \eqref{eq:indecompUspaces}, are independent as elements of the free $\Z$-module $\Z^N$, where $N$ denotes the number of entries of any rank vector in dimension $n+1$.
\end{lemma}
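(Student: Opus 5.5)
I plan to establish linear independence by a triangularity argument: I will exhibit, for each indecomposable $U^{(h_1,\dots,h_n)}$, a distinguished coordinate of the rank vector, and order the indecomposables so that the resulting square submatrix of the matrix of rank vectors is triangular with nonzero diagonal.

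First I will compute the rank vector of a single $U=U^{(h_1,\dots,h_n)}$. The dimension part is $r_{l,s}=1$ if $l>n+1-h_s$ and $0$ otherwise. In the image part, for row $i$ the composite $f_i^{j_2}\circ\cdots\circ f_i^{j_1}$ is the identity on $\C$ or zero. It is nonzero exactly when column $j_1$ is nonzero at height $i$, i.e. $i>n+1-h_{j_1}$, and all intermediate columns are nonzero there. Because of the monotonicity condition $h_j\le h_{j'}$ on nonzero entries, this happens exactly when $h_{j_1}>0$, $i>n+1-h_{j_1}$, and $h_j>0$ for all $j_1\le j\le j_2+1$. Since all vertical maps are identities below the first nonzero space, intersecting with the image of the vertical inclusions from level $k$ just tests whether the same holds at level $k$. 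So each entry is a $0/1$ indicator of $k>n+1-h_{j_1}$ (together with $i$ in the right range) and of the support condition on $j_1,\dots,j_2+1$.

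Next I will read off from the dimension vector alone the data $(h_1,\dots,h_n)$: the column $s$ of $r_{l,s}$ is a step function with jump at $l=n+1-h_s$. Thus distinct indecomposables already have distinct dimension parts. Distinctness, however, is not independence, so I will use the following ordering. Order the tuples $h$ by the partial order $h\preceq h'$ iff $h_s\le h'_s$ for all $s$. For each $h$, the coordinate set $T(h)=\{(l,s): l=n+1-h_s+1,\ h_s>0\}$ (the top nonzero box in each column) is nonempty unless $U$ is zero. If $h'\not\succeq h$, some column has $h'_s<h_s$, and then the box $(n+2-h_s,s)$ is zero in $U^{h'}$.

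To turn this into triangularity I will take a linear extension of $\preceq$ and suppose $\sum c_h\,{\bf r}(U^h)=0$. Let $h$ be a $\preceq$-\emph{maximal} tuple with $c_h\neq0$. The dimension-vector part is additive, since it is the dimension vector of $\bigoplus U^{h}$, and its coordinates are nonnegative combinations of the indicators $[l>n+1-h_s]$. This gives a Möbius/inclusion–exclusion setup on the product of chains $\prod_s\{0,\dots,n+1\}$. The indicator functions $\chi_h(l,s)=[h_s\ge n+2-l]$ on that grid are sums over columns, so they are not independent as functions: two tuples differing in two columns can satisfy $\chi_{a}+\chi_{b}=\chi_{c}+\chi_{d}$. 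This is the main obstacle. The dimension part alone fails, and the image-intersection entries $r_{ij_1j_2k}$ must break these column-swap relations.

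To handle that obstacle I will use the entries with $j_1<j_2$. These detect joint nonvanishing across a range of consecutive columns at a given height, which behaves like a rank function of a type $\mathbb{A}$ interval module in the row direction. Concretely, for fixed row $i$ the restriction of $U^h$ to row $i$ is the interval module $U_{a,b}$ with $[a,b]$ the set of columns $j$ with $i>n+1-h_j$. The $r_{i j_1 j_2,i+1}$ entries are exactly the Abeasis–Del Fra ranks of Theorem \ref{thm:paramtypeA}, which determine and separate interval multiplicities. Hence from $\sum c_h{\bf r}(U^h)=0$ I will deduce, row by row, that for every interval $[a,b]$ and every $i$,
\[
\sum_{h:\ \{j: h_j\ge n+2-i\}=[a,b]} c_h=0 .
\]
By monotonicity, $h$ is determined by the family of these intervals over all $i$, which is a nested chain indexed by $i$. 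I will then finish with a Möbius inversion in $i$ on this nested family, using the $k$-index intersections to isolate the height at which each column first becomes nonzero. This forces all $c_h=0$.
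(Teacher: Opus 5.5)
\textbf{Gap in the final step.} Your last step ("M\"obius inversion in $i$ \dots\ forces all $c_h=0$") is asserted rather than argued, and it cannot be carried out, because the lemma is false as soon as $n+1\ge 4$.

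First, your description of the image entries is off. The vertical image sits in the target column $j_2+1$, so for $U=U^{(h_1,\dots,h_n)}$ one has $r_{ij_1j_2k}=1$ exactly when $h_j>0$ for $j_1\le j\le j_2+1$, $i>n+1-h_{j_1}$ and $k>n+1-h_{j_2+1}$. This formula reproduces the paper's twelve vectors for $n+1=3$. Consequently every coordinate of ${\bf r}^{U}$ depends only on the support and on at most two heights. Your row-wise Abeasis--Del Fra constraints are of the same kind: they only control, one row at a time, the signed number of tuples with a given row interval. One- and two-column data of this sort cannot determine a signed measure on chains of row intervals that involve three free heights.

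Explicitly, for $n+1=4$,
\[
\sum_{\epsilon\in\{0,1\}^3}(-1)^{\epsilon_1+\epsilon_2+\epsilon_3}\,{\bf r}^{U^{(1+\epsilon_1,\,2+\epsilon_2,\,3+\epsilon_3)}}=0 .
\]
All eight tuples are weakly increasing with full support, and each coordinate is independent of at least one $\epsilon_m$. Your row constraints are all satisfied as well: the row-$3$, row-$2$ and row-$1$ intervals are governed by $\epsilon_1$, $\epsilon_2$, $\epsilon_3$ respectively, and row $4$ is constant.

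Counting confirms this. With the indexing of the paper's $n+1=3$ example ($N=12$), for $n+1=4$ one gets $N=12+3\cdot 10=42$, but there are $3\cdot 4+2\cdot 10+20=52$ indecomposables. So your opening plan of a triangular square submatrix is impossible too.

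\textbf{The paper's argument and what can be salvaged.} The paper's column-by-column induction has the same defect. The coordinates it exhibits as ``new'' for an extension $V'$ only see the heights in two columns. They are therefore shared by all extensions with the same support that agree in those two heights, and they do not separate those extensions from each other. The lemma as stated holds only for $n+1\le 3$.

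What Theorem~\ref{thm:firstparam} actually needs is weaker and true: injectivity on the decompositions of representations $M^f\in R^{\iota}_{\bf d}$. Each column of such an $M^f$ is $\C^1\subset\C^2\subset\dots\subset\C^{n+1}$. By Krull--Schmidt, for every column $j$ and every row $x$, exactly one summand has its top nonzero space in column $j$ in row $x$. Differencing the entries $r_{ijjk}$ in $i$ and $k$ gives the number of summands containing columns $j,j+1$ with top rows $x$ and $y$ there. These numbers form a partial matching, and chaining the matchings over $j$ reconstructs the summands. That is the statement worth proving.
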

\begin{proof}
We proceed by induction on the number of columns in the support of the indecomposables. For the base case, we observe that the indecomposables of the form $U^{(h_1,\dots,h_n)}$ supported on only one column of $(\Gamma,I)$ can be regarded as the following indecomposables of the equioriented quiver of type $\mathbb{A}_n$:
\begin{equation*}
\begin{aligned}
\begin{tikzcd}[]
\overset{0}{\bullet} \ar[d, "0"] \\
\overset{0}{\bullet} \ar[d, "0"] \\
\dots \ar[d, "0"] \\
\overset{\C}{\bullet}
\end{tikzcd},\quad &
\begin{tikzcd}[]
\overset{0}{\bullet} \ar[d, "0"] \\
\dots \ar[d, "0"] \\
\overset{\C}{\bullet} \ar[d, "1"] \\
\overset{\C}{\bullet}
\end{tikzcd},\quad & \dots,\quad &
\begin{tikzcd}[]
\overset{0}{\bullet} \ar[d, "0"] \\
\overset{\C}{\bullet} \ar[d, "1"] \\
\dots \ar[d, "1"] \\
\overset{\C}{\bullet}
\end{tikzcd}, \quad &
\begin{tikzcd}[]
\overset{\C}{\bullet} \ar[d, "1"] \\
\dots \ar[d, "1"] \\
\overset{\C}{\bullet} \ar[d, "1"] \\
\overset{\C}{\bullet}
\end{tikzcd}
\end{aligned}.
\end{equation*}
The dimension vectors of such indecomposables form a basis for the dimension vectors of all the indecomposables $U^{(h_1,\dots,h_n)}$, and therefore their set of rank vectors is an independent set. For all other indecomposables - that is, the ones supported on more than one column - in order to prove the independence we need to consider the second part of the rank vectors as well.

For the induction step, we assume the claim to be true when the number of columns in the support of the considered indecomposable $V$ is at most $m-1$ (for some $m>1$) and want to prove it for the indecomposables obtained by adding one column to the support of $V$. Let us denote such indecomposables by $V'$.
We call $\hat{i}$ the number $n+1-h_{m-1}$, which is the first row in the $m-1$-th column where the dimension vector of $V$ is not zero. Because of the condition $h_a\leq h_b$ for every $a\leq b$, which holds for all indecomposables of the form $U^{(h_1,\dots,h_n)}$, we know that the first row in the $m$-th column where the dimension vector of $V'$ is not zero can be at most $\hat{i}$.
Now, if this row is exactly $\hat{i}$, then the rank vector of $V'$ is:
\begin{equation*}
\begin{aligned}
{\bf r}^{V'}=(& 0,\dots,0,r_{\hat{i},1,m-1,\hat{i}}^{V},0,\dots,0,r_{\hat{i},2,m-1,\hat{i}}^{V},0,\dots,0,\textcolor{red}{1},0,\dots,0,r_{\hat{i}+1,1,m-1,\hat{i}}^{V},\\
& r_{\hat{i}+1,1,m-1,\hat{i}+1}^{V},\dots,\textcolor{red}{1},0,\dots,r_{\hat{i}+2,1,m-1,\hat{i}}^{V},r_{\hat{i}+2,1,m-1,\hat{i}+1}^{V},r_{\hat{i}+2,1,m-1,\hat{i}+2}^{V},\dots,\\
& \textcolor{red}{1},0,\dots,1,\dots,1)
\end{aligned}
\end{equation*}
where the 1s in red correspond, respectively, to $r_{\hat{i},m-1,m,\hat{i}}^{V'}$, $r_{\hat{i}+1,m-1,m,\hat{i}}^{V'}$, $r_{\hat{i}+2,m-1,m,\hat{i}}^{V'}$ and so on until $r_{n+1,m-1,m,\hat{i}}^{V'}$.
Because of the definition of the index $\hat{i}$, the corresponding entries of the rank vector ${\bf r}^{V}$ (and of the rank vectors corresponding to indecomposables with support smaller than the support of $V$) were zero, meaning that the set of all rank vectors of the indecomposables considered so far is an independent set in $\Z^N$ when we add ${\bf r}^{V'}$ to it. Similarly, we know that the entries of ${\bf r}^{V'}$ from $r_{n+1,m-1,m,\hat{i}}^{V'}$ to $r_{n+1,m-1,m,n+1}^{V'}$ (the last entries) are ones.

If, instead, the first row in the $m$-th column where the dimension vector of $V'$ is not zero is $\hat{i}-1$, then the new 1s in the rank vector ${\bf r}^{V'}$ correspond to all entries labelled by $\ell,m-1,m,\ell'$ where $\ell\geq \hat{i}$ and $\hat{i}-1\leq \ell'<\hat{i}$, and therefore the set of all rank vectors of the indecomposables considered so far plus ${\bf r}^{V'}$ is an independent set. We proceed analogously if the first row in the $m$-th column where the dimension vector of $V'$ is not zero is $\hat{i}-2$: then the new 1s in ${\bf r}^{V'}$ are the entries labelled by $\ell,m-1,m,\ell'$ for $\ell\geq \hat{i}$ and $\hat{i}-2\leq \ell'<\hat{i}$, and so forth until the first row in the $m$-th column where the dimension vector of $V'$ is not zero is the first row of $(\Gamma,I)$. Adding all such rank vectors to the preexisting set of rank vectors does not change its independency in $\Z^N$. This proves the statement in the induction step and concludes the proof.
\end{proof}

The following example illustrates the idea of the proof of Lemma \ref{lemma:indipvectors} in dimension $n+1=3$:

\begin{example}
The rank vectors of the 12 indecomposables of $(\Gamma,I)$ of the form $U^{h_1,h_2}$ are:

\begin{equation*}
\begin{aligned}
    {\bf r}^{U^1}=(1,1,1,1,1,1,1,1,1,1,1,1), \quad & {\bf r}^{U^2}=(1,1,1,0,0,0,0,0,0,0,0,0), \\
    {\bf r}^{U^3}=(0,0,0,1,1,1,0,0,0,0,0,0), \quad & {\bf r}^{U^4}=(0,1,1,0,0,0,0,0,0,0,0,0), \\
    {\bf r}^{U^5}=(0,0,0,0,1,1,0,0,0,0,0,0), \quad &  {\bf r}^{U^6}=(0,0,1,0,0,0,0,0,0,0,0,0), \\
    {\bf r}^{U^7}=(0,0,0,0,0,1,0,0,0,0,0,0), \quad & {\bf r}^{U^8}=(0,1,1,1,1,1,0,1,1,1,1,1), \\
    {\bf r}^{U^9}=(0,0,1,1,1,1,0,0,0,1,1,1), \quad & {\bf r}^{U^{10}}=(0,1,1,0,1,1,0,0,1,0,1,1), \\
    {\bf r}^{U^{11}}=(0,0,1,0,1,1,0,0,0,0,1,1), \quad & {\bf r}^{U^{12}}=(0,0,1,0,0,1,0,0,0,0,0,1).
\end{aligned},
\end{equation*}

where

\begin{equation*}
\begin{aligned}
& U^1=
\begin{tikzcd}[]
\overset{\C}{\bullet} \ar[r, "1"] \ar[d, "1"]  & \overset{\C}{\bullet} \ar[d, "1"] \\
\overset{\C}{\bullet} \ar[ur,phantom, "\scalebox{1.5}{$\circlearrowleft$}"] \ar[r, "1"] \ar[d, "1"] & \overset{\C}{\bullet}  \ar[d, "1"] \\
\overset{\C}{\bullet} \ar[ur, phantom, "\scalebox{1.5}{$\circlearrowleft$}"] \ar[r, "1"] & \overset{\C}{\bullet}
\end{tikzcd},\;
U^{2\phantom{0}}=
\begin{tikzcd}[]
\overset{\C}{\bullet} \ar[r, "0"] \ar[d, "1"]  & \overset{0}{\bullet} \ar[d, "0"] \\
\overset{\C}{\bullet} \ar[ur,phantom, "\scalebox{1.5}{$\circlearrowleft$}"] \ar[r, "0"] \ar[d, "1"] & \overset{0}{\bullet}  \ar[d, "0"] \\
\overset{\C}{\bullet} \ar[ur, phantom, "\scalebox{1.5}{$\circlearrowleft$}"] \ar[r, "0"] & \overset{0}{\bullet}
\end{tikzcd},\;
U^{3\phantom{0}}=
\begin{tikzcd}[]
\overset{0}{\bullet} \ar[r, "0"] \ar[d, "0"]  & \overset{\C}{\bullet} \ar[d, "1"] \\
\overset{0}{\bullet} \ar[ur,phantom, "\scalebox{1.5}{$\circlearrowleft$}"] \ar[r, "0"] \ar[d, "0"] & \overset{\C}{\bullet}  \ar[d, "1"] \\
\overset{0}{\bullet} \ar[ur, phantom, "\scalebox{1.5}{$\circlearrowleft$}"] \ar[r, "0"] & \overset{\C}{\bullet}
\end{tikzcd},\;
U^{4\phantom{0}}=
\begin{tikzcd}[]
\overset{0}{\bullet} \ar[r, "0"] \ar[d, "0"]  & \overset{0}{\bullet} \ar[d, "0"] \\
\overset{\C}{\bullet} \ar[ur,phantom, "\scalebox{1.5}{$\circlearrowleft$}"] \ar[r, "0"] \ar[d, "1"] & \overset{0}{\bullet}  \ar[d, "0"] \\
\overset{\C}{\bullet} \ar[ur, phantom, "\scalebox{1.5}{$\circlearrowleft$}"] \ar[r, "0"] & \overset{0}{\bullet}
\end{tikzcd} \\
\end{aligned}
\end{equation*}
and so forth.
It is easy to verify that these twelve rank vectors are independent (and therefore a basis of $\Z^{12}$) by inserting them as rows in a matrix and using Gauss elimination.
\end{example}

Now we can state the first main result about the $B$-isomorphism classes of representations in $R^{\iota}_{\bf d}$.

\begin{theorem}\label{thm:firstparam}
    Two representations $M^f, M^{g}$ in $R^{\iota}_{\bf d}$ are in the same orbit under the action of $G^{\iota}_{\bf d}$ given in Definition \ref{def:Baction} if and only if ${\bf r}^f={\bf r}^{g}$.
\end{theorem}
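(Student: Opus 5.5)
The plan follows the pattern of Abeasis–Del Fra (Theorem \ref{thm:paramtypeA}): first show that ${\bf r}^f$ is an orbit invariant, then reduce the converse to the decomposition of Theorem \ref{thm:indecompU} and the linear independence of Lemma \ref{lemma:indipvectors}.

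\emph{Invariance.} Let $M^g=h\op M^f$ with $h=(h_1,\dots,h_n)\in G^{\iota}_{\bf d}$. By Remark \ref{remark:actionrestriction}, for each row $i$ the maps $g_i^j$ are obtained from $f_i^j$ by conjugating with the restrictions $h_j|_{\C^i}$. These restrictions are well defined and invertible because the $h_j$ are upper triangular.
\begin{itemize}
\item The composite $g_i^{j_2}\circ\dots\circ g_i^{j_1}$ equals $h_{j_2+1}|_{\C^i}\,(f_i^{j_2}\circ\dots\circ f_i^{j_1})\,h_{j_1}|_{\C^i}^{-1}$.
\item Hence its image is $h_{j_2+1}|_{\C^i}$ applied to the image of the corresponding $f$-composite.
\item An invertible upper-triangular matrix preserves every standard subspace $\C^k=\im(\iota_{i,i-1}\circ\dots\circ\iota_{k+1,k})$.
\end{itemize}
So the intersection of the image with $\C^k$ is carried isomorphically onto the corresponding intersection for $g$, and every $r_{ij_1j_2k}$ is unchanged. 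The dimension part of ${\bf r}$ is constant on $R^{\iota}_{\bf d}$. This proves ${\bf r}^f={\bf r}^g$ whenever the two representations are $B$-isomorphic.

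\emph{Converse.}
\begin{enumerate}
\item Observe that ${\bf r}$ is additive on direct sums, provided the decomposition is compatible with the standard flags. The natural setting is a decomposition of the kind produced in the proof of Theorem \ref{thm:indecompU}, where each summand $U^{(h_1,\dots,h_n)}$ occurs as the span of vectors adapted to the flags $\C^1\subset\dots\subset\C^{n+1}$ at each column.
\item Upgrade Theorem \ref{thm:indecompU} to a \emph{normal form}. Every $M^f$ should be $B$-isomorphic to a representation $M^{f^0}$ in which each $f^j_{n+1}$ is a partial permutation matrix and the summands $U^{(h)}$ are spanned by standard basis vectors. Concretely, one performs the row/column sweeping of Remark \ref{remark:effectofBaction} successively from $f^1$ to $f^{n-1}$. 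The operations on the shared vertex $h_{j+1}$ must be chosen so that they do not destroy the echelon form already achieved for $f^{j}$. I would proceed by induction on $j$, using the chain-of-intervals structure (type $\mathbb{A}$ strings along each row) exhibited in the proof of Theorem \ref{thm:indecompU}.
\item For such normal forms, ${\bf r}^{f^0}=\sum_h m_h\,{\bf r}^{U^{(h)}}$, where $m_h$ is the multiplicity of $U^{(h)}$. By Lemma \ref{lemma:indipvectors} the vectors ${\bf r}^{U^{(h)}}$ are linearly independent, so ${\bf r}^{f}$ determines the multiplicities $m_h$ uniquely.
\item Two normal forms with the same multiplicities differ by a permutation of basis vectors. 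This permutation must be realized by an element of $G^{\iota}_{\bf d}$, which is where the compatibility requirement in the next paragraph enters.
\item Combining the steps: if ${\bf r}^f={\bf r}^g$, then $M^f$ and $M^g$ are $B$-isomorphic to the same normal form, hence to each other.
\end{enumerate}

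\emph{Main obstacle.} The hard part is the normal-form step, and the uniqueness that goes with it. Upper-triangular base changes at one vertex act simultaneously on $f^{j-1}$ (as row operations) and on $f^j$ (as column operations). I need to show that the sweeping can be done consistently along the whole chain, and that the resulting partial-permutation tuple is determined by the multiplicities $m_h$ alone, not by the order in which summands are placed in the standard basis. I would first try to make the placement canonical: the bottom-row position of the $\C$ in column $j$ of each summand is forced by the heights $h_j$ together with the ordering of pivots. Then I would check that any two admissible placements differ by an upper-triangular change of basis at each vertex. Failing that, the fallback is to use the invariants $r_{ij_1j_2k}$ directly to reconstruct the pivot positions of the normal form, mirroring how north-west ranks determine a partial permutation matrix.
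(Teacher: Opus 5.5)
Your invariance argument is correct. It is more direct than the paper's, which restricts the isomorphism to type-$\mathbb{A}$ subquivers and appeals to Abeasis--Del Fra.

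The worries in your steps 1 and 4 (flag-compatible decompositions, realising permutations inside $G^{\iota}_{\bf d}$) are unnecessary:
\begin{itemize}
\item Let $\phi\colon M^f\to M^g$ be any isomorphism in $\rep_{\C}(\Gamma,I)$ with $f,g\in R^{\iota}_{\bf d}$. The relations $\phi_{i+1,j}\iota_{i+1,i}=\iota_{i+1,i}\phi_{i,j}$ force $\phi_{n+1,j}(\C^i)=\C^i$ for all $i$. Hence $h=(\phi_{n+1,1},\dots,\phi_{n+1,n})\in G^{\iota}_{\bf d}$ and $g=h\op f$.
\item ${\bf r}$ is intrinsic (dimensions of intersections of images of horizontal and vertical paths), so it is additive on every direct sum.
\end{itemize}
With these two remarks your converse reduces to Theorem~\ref{thm:indecompU}, Lemma~\ref{lemma:indipvectors} and Krull--Schmidt. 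That is exactly the paper's argument, and no normal form is needed.

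The real gap is the one you flagged as the main obstacle, and it cannot be closed: Theorem~\ref{thm:indecompU} and the converse implication are both false. Take $n=4$, let $E_{ab}$ denote matrix units, and for $t\in\C\setminus\{0,-1\}$ let $f_t=(f^1,f^2,f^3_t)$ with
\[
f^1=\mathrm{diag}(1,1,1,0,1),\qquad f^2=E_{11}+E_{22}+E_{44}+E_{35}+E_{45},\qquad f^3_t=E_{11}+E_{22}+E_{33}+tE_{34}+E_{55}.
\]
\begin{itemize}
\item At column $3$, take the images in $W=\C^4/\C^2$ of $\C^3$, $f^2(\C^4)$, $f^2f^1(\C^5)\cap\C^4$ and $\ker f^3_t\cap\C^4$. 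They are the four distinct lines $\langle\bar e_3\rangle$, $\langle\bar e_4\rangle$, $\langle\bar e_3+\bar e_4\rangle$, $\langle t\bar e_3-\bar e_4\rangle$.
\item For $h\in G^{\iota}_{\bf d}$, $h_3$ preserves $\C^2,\C^3,\C^4$ and maps each of these subspaces for $f$ onto the corresponding one for $h\op f$.
\item So $h\op f_t=f_{t'}$ forces the map induced by $h_3$ on $W$ to fix three distinct lines. It is therefore scalar, and $t=t'$.
\end{itemize}
Thus the $f_t$ lie in pairwise distinct orbits. A direct check shows ${\bf r}^{f_t}$ (and ${\bf s}^{f_t}$) does not depend on $t$; in any case ${\bf r}$ takes only finitely many values. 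A fixed dimension vector admits only finitely many sums of $U^{(h)}$'s and only finitely many partial-permutation tuples. So neither Theorem~\ref{thm:indecompU} nor your step 2 can hold here. Padding with single-column thin summands gives the same phenomenon for every $n\ge 4$.

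The decomposition already fails for $n=3$. Take $f=(\mathrm{diag}(1,1,0,1),\,E_{11}+E_{22}+E_{33}+E_{34})$. At vertex $(4,2)$,
\[
\ker f^2\cap(\im f^1+\C^3)\neq(\ker f^2\cap\im f^1)+(\ker f^2\cap\C^3).
\]
This is impossible in a direct sum of thin representations, where images and kernels of paths at a vertex are coordinate subspaces for one basis.

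So only your invariance direction stands. For $n\ge 4$ no finite-valued invariant, ${\bf r}$ included, can parametrise the $G^{\iota}_{\bf d}$-orbits.
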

\begin{proof}
($\implies$) We suppose that $M^f$ and $M^{g}$ are $B$-isomorphic and at the same time there exist $i,j_1,j_2,k$ such that $r_{ij_1j_2k}^f \neq r_{ij_1j_2k}^{g}$. If $k\neq i+1$,  the parameters $r_{ij_1j_2k}^f$ and $r_{ij_1j_2k}^{g}$ correspond to the dimension of the intersections of the images of the compositions of the linear maps coloured in red:
\begin{equation*}
\begin{aligned}
\begin{tikzcd}[sep=large]
\overset{\C}{\bullet} \ar[r, "\textcolor{forestgreen}{f_{1}^{j_1}}"] \ar[d,swap, "\iota_{2,1}"]  & ... \ar[r, "\textcolor{forestgreen}{f_{1}^{j_2}}"] \ar[d,swap,yshift=5, "\iota_{2,1}"] & \overset{\C}{\bullet} \ar[d,swap,,yshift=5, "\iota_{2,1}"]  \\
\overset{\C^2}{\bullet} \ar[ur,phantom, "\scalebox{1.5}{$\circlearrowleft$}"] \ar[r, "\textcolor{forestgreen}{f_{2}^{j_1}}"] \ar[d,swap, "\iota_{k,k-1}"] & ... \ar[ur, phantom,yshift=3,"\scalebox{1.5}{$\circlearrowleft$}"] \ar[r, "\textcolor{forestgreen}{f_{2}^{j_2}}"] \ar[d,swap, "\iota_{k,k-1}"] & ... \ar[d,swap, "\textcolor{red}{\iota_{k,k-1}}"] \\
... \ar[d,swap, "\iota_{i,i-1}"] \ar[ur, phantom,yshift=3, "\scalebox{1.5}{$\circlearrowleft$}"] \ar[r, "\textcolor{forestgreen}{f_{k}^{j_1}}"] & ...  \ar[ur, phantom,,yshift=3, "\scalebox{1.5}{$\circlearrowleft$}"] \ar[d, swap, "\iota_{i,i-1}"] \ar[r, "\textcolor{forestgreen}{f_{k}^{j_2}}"] & ...  \ar[d,swap, "\textcolor{red}{\iota_{i,i-1}}"]\\
... \ar[ur, phantom,yshift=3, "\scalebox{1.5}{$\circlearrowleft$}"] \ar[r, "\textcolor{red}{f_{i}^{j_1}}"] & ...  \ar[ur, phantom,yshift=3, "\scalebox{1.5}{$\circlearrowleft$}"] \ar[r, "\textcolor{red}{f_{i}^{j_2}}"] & ...
\end{tikzcd}
& \hspace{20pt}
\begin{tikzcd}[sep=large]
\overset{\C}{\bullet} \ar[r, "\textcolor{forestgreen}{g_{1}^{j_1}}"] \ar[d,swap, "\iota_{2,1}"]  & ... \ar[r, "\textcolor{forestgreen}{g_{1}^{j_2}}"] \ar[d,swap,yshift=5, "\iota_{2,1}"] & \overset{\C}{\bullet} \ar[d,swap,yshift=5, "\iota_{2,1}"]  \\
\overset{\C^2}{\bullet} \ar[ur,phantom, "\scalebox{1.5}{$\circlearrowleft$}"] \ar[r, "\textcolor{forestgreen}{g_{2}^{j_1}}"] \ar[d,swap, "\iota_{k,k-1}"] & ... \ar[ur, phantom,yshift=3,"\scalebox{1.5}{$\circlearrowleft$}"] \ar[r, "\textcolor{forestgreen}{g_{2}^{j_2}}"] \ar[d,swap, "\iota_{k,k-1}"] & ... \ar[d,swap, "\textcolor{red}{\iota_{k,k-1}}"] \\
... \ar[d,swap, "\iota_{i,i-1}"] \ar[ur, phantom,yshift=3, "\scalebox{1.5}{$\circlearrowleft$}"] \ar[r, "\textcolor{forestgreen}{g_{k}^{j_1}}"] & ...  \ar[ur, phantom,,yshift=3, "\scalebox{1.5}{$\circlearrowleft$}"] \ar[d, swap, "\iota_{i,i-1}"] \ar[r, "\textcolor{forestgreen}{g_{k}^{j_2}}"] & ...  \ar[d,swap, "\textcolor{red}{\iota_{i,i-1}}"]\\
... \ar[ur, phantom,yshift=3, "\scalebox{1.5}{$\circlearrowleft$}"] \ar[r, "\textcolor{red}{g_{i}^{j_1}}"] & ...  \ar[ur, phantom,yshift=3, "\scalebox{1.5}{$\circlearrowleft$}"] \ar[r, "\textcolor{red}{g_{i}^{j_2}}"] & ...
\end{tikzcd}.
\end{aligned}
\end{equation*}
If $k=i+1$, then $r_{ij_1j_2k}^f \neq r_{ij_1j_2k}^{g}$ means $\rk(f_{i}^{j_2}\circ\cdots\circ f_{i}^{j-1})\neq \rk (g_{i}^{j_2}\circ\cdots\circ g_{i}^{j_1})$.

The hypothesis $M^f \overset{B}{\cong} M^{g}$ means that there exists an isomorphism $\varphi$ of representations in $\rep_{\C}(Q,I)$ (represented by an an invertible, upper-triangular matrix) such that $\varphi(M^f_{i,j})=M^{g}_{i,j}$ for all $i,j$. In particular, $\varphi$ is an isomorphism when restricted to the subquivers of type $\mathbb{A}_m$:

\begin{equation*}
\begin{aligned}
\begin{tikzcd}[sep=large]
 & & ... \ar[d, "\textcolor{red}{\iota_{k,k-1}}"]  \\
 & & ... \ar[d, "\textcolor{red}{\iota_{i,i-1}}"] \\
\overset{\C^i}{\bullet} \ar[r, "\textcolor{red}{f_{i}^{j_1}}"] & ... \ar[r, "\textcolor{red}{f_{i}^{j_2}}"] & ...\\
\end{tikzcd} &
\hspace{25pt}
\begin{tikzcd}[sep=large]
 & & ... \ar[d, "\textcolor{red}{\iota_{k,k-1}}"]  \\
 & & ... \ar[d, "\textcolor{red}{\iota_{i,i-1}}"] \\
\overset{\C^i}{\bullet} \ar[r, "\textcolor{red}{g_{i}^{j_1}}"] & ... \ar[r, "\textcolor{red}{g_{i}^{j_2}}"] & ...\\
\end{tikzcd}.
\end{aligned}
\end{equation*}
We apply Proposition 2.7 of \cite{abeasis1985degenerations} to these subquivers. The isomorphism $\varphi$ between $M^f$ and $M^{g}$ then implies
\begin{equation*}
\rk((f_{i}^{j_2} \circ\dots\circ f_{i}^{j_1})-(\iota_{i,i-1}\circ\dots\circ\iota_{k,k-1})=\rk((g_{i}^{j_2} \circ\dots\circ g_{i}^{j_1})-(\iota_{i,i-1}\circ\dots\circ\iota_{k,k-1}))
\end{equation*}
and
\begin{equation*}
    \rk(f_{i}^{j_2} \circ\dots\circ f_{i}^{j_1})=\rk(g_{i}^{j_2} \circ\dots\circ g_{i}^{j_1}),
\end{equation*}
contradicting the assumption $r_{ij_1j_2k}^f \neq r_{ij_1j_2k}^{g}$.

($\impliedby$) We need to show that, if ${\bf r}^{f}={\bf r}^{g}$, then $M^f\cong M^{g}$. This is equivalent to proving that to a fixed rank vector ${\bf r}^f$ corresponds exactly one decomposition into indecomposables: $M^f = \bigoplus U^{(h_1,\dots, h_n)}$. This correspondence is directly implied by Lemma \ref{lemma:indipvectors}.
\end{proof}

\begin{example}\label{ex:15rankvec}
    Theorem \ref{thm:firstparam} provides an alternative proof to the fact that the matrices representing all orbits in Example \ref{ex:15reps} define, in fact, representations that are not $B$-isomorphic. We write their rank vectors without the first six entries, since the dimension vector of all $(\Gamma,I)$-representations in $R^{\iota}_{\bf d}$ is fixed (in dimension $n+1=3$) as $(1,1,2,2,3,3)$:
    \begin{equation*}
    \begin{aligned}
        & {\bf r}^{f^{1}}=(1,1,2,1,2,3), && {\bf r}^{f^{2}}=(1,1,2,1,2,2), &&& {\bf r}^{f^{3}}=(1,1,1,1,1,2), \\
        & {\bf r}^{f^{4}}=(0,0,1,0,1,2), && {\bf r}^{f^{5}}=(1,1,1,1,2,2), &&& {\bf r}^{f^{6}}=(0,0,1,1,2,2), \\
        & {\bf r}^{f^{7}}=(0,1,1,1,2,2), && {\bf r}^{f^{8}}=(0,1,1,1,1,2), &&& {\bf r}^{f^{9}}=(1,1,1,1,1,1), \\
        & {\bf r}^{f^{10}}=(0,0,1,0,1,1), && {\bf r}^{f^{11}}=(0,0,0,0,0,1), &&& {\bf r}^{f^{12}}=(0,0,0,0,1,1), \\
        & {\bf r}^{f^{13}}=(0,0,0,1,1,1), && {\bf r}^{f^{14}}=(0,1,1,1,1,1), &&& {\bf r}^{f^{15}}=(0,0,0,0,0,0). \\
    \end{aligned}
    \end{equation*}
\end{example}

\begin{definition}
    The parametrisation of $B$-isomorphism classes of $(\Gamma,I)$-representations in $R^{\iota}_{\bf d}$ given in Definition \ref{def:rankvector} is called \Def{$r$-parametrisation}.
\end{definition}

As mentioned in Example \ref{ex:15rankvec}, once some assumptions are made on the considered representations (for instance, their dimensions vector), part of the data provided by the $r$-parametrisation is redundant. We introduce now a second, more compact parametrisation for the $(\Gamma,I)$-representations in $R^{\iota}_{\bf d}$. First, let us recall the definition of Matrix Schubert varieties and a few facts from \cite{miller2005matrix}.

\begin{definition}\cite[Definition 15.1]{miller2005matrix}\label{def:matrixschubvar}
    Let $w\in \Mat_{k\times l}$ be a partial permutation, meaning that $w$ is a $k\times l$ matrix having all entries equal to 0 except for at most one entry equal to 1 in each row and column. The \Def{matrix Schubert variety} $\overline{X}_w$ inside $\Mat_{k\times l}$ is the subvariety
    \begin{equation*}
        \overline{X}_w = \{ Z \in  \Mat_{k\times l} | \rk(Z_{p\times q})\leq \rk(w_{p\times q})\; \forall p,q\}
    \end{equation*}
    where $Z_{p\times q}$ is the upper-left $p\times q$ rectangular submatrix of $Z$.
\end{definition}

We denote by $B^-_k$ the Borel subgroup of invertible lower-triangular matrices in $\GL_k$. It follows from Definition \ref{def:matrixschubvar} that matrix Schubert varieties are preserved by the action of $B^-_k \times B_l$ on $\Mat_{k\times l}$ defined as $(b^-,b)\op Z = b^- Z b^{-1}$ (the effect of this action is ``sweeping downwards'' and ``sweeping to the right''). The following proposition implies that $B^-_k \times B_l$ has finitely many orbits in $\Mat_{k\times l}$.

\begin{proposition}\cite[Proposition 15.27]{miller2005matrix}\label{prop:northwestorbits}
    In each orbit of $B^-_k \times B_l$ on $\Mat_{k\times l}$ lies a unique partial permutation $w$, and the orbit is contained in $\overline{X}_w$.
\end{proposition}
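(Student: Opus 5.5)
The plan is to prove the three ingredients of \Cref{prop:northwestorbits} separately: invariance of north-west ranks under $B^-_k\times B_l$, existence of a partial permutation in every orbit via an elimination procedure, and uniqueness via recovering a partial permutation from its north-west ranks. The containment $\orb\subseteq \overline{X}_w$ then follows at once, since every element of the orbit has \emph{the same} north-west ranks as $w$.

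\textbf{Step 1: invariance.} For $(b^-,b)\in B^-_k\times B_l$ and $Z\in\Mat_{k\times l}$, I will check the identity
\begin{equation*}
(b^- Z b^{-1})_{p\times q}=b^-_{p\times p}\,Z_{p\times q}\,(b^{-1})_{q\times q}.
\end{equation*}
The first $p$ rows of a lower-triangular $b^-$ have nonzero entries only in columns $1,\dots,p$. Likewise, the first $q$ columns of the upper-triangular $b^{-1}$ have nonzero entries only in rows $1,\dots,q$. The square blocks $b^-_{p\times p}$ and $(b^{-1})_{q\times q}$ are triangular with nonzero diagonal, hence invertible. Therefore $\rk((b^-Zb^{-1})_{p\times q})=\rk(Z_{p\times q})$ for all $p,q$.

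\textbf{Step 2: existence.} I will perform the ``sweep downwards and to the right'' algorithm, processing rows from top to bottom. Suppose row $i$ is nonzero and its leftmost nonzero entry lies in column $j$. Scale that entry to $1$; this uses a diagonal element, which lies in both Borel subgroups. Then add multiples of column $j$ to the columns $j'>j$ to clear the rest of row $i$; these are right multiplications by upper-triangular elementary matrices. Next add multiples of row $i$ to the rows $i'>i$ to clear column $j$ below the pivot; these are left multiplications by lower-triangular elementary matrices.

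I will then check by induction that previously processed rows are untouched. Each earlier row $i'<i$ contains only its own pivot, at some column $j'$. Column operations from a column $j$ only affect columns to the right of $j$, and $j\neq j'$. So I must argue that row $i'$ has a zero in column $j$, which holds because row $i'$ has been reduced to its single pivot. Row operations only affect rows below $i$. Columns already holding a pivot are zero below that pivot, so the pivots of later rows land in new columns. The result is a partial permutation matrix in the orbit of $Z$.

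\textbf{Step 3: uniqueness and containment.} For a partial permutation $w$ with north-west ranks $r_{pq}=\rk(w_{p\times q})$, the count $r_{pq}$ is the number of $1$s of $w$ in the north-west $p\times q$ rectangle. Hence
\begin{equation*}
w_{pq}=r_{pq}-r_{p-1,q}-r_{p,q-1}+r_{p-1,q-1},
\end{equation*}
with the convention $r_{0,q}=r_{p,0}=0$. By Step 1, two partial permutations in the same orbit have equal rank functions, so they coincide. Containment in $\overline{X}_w$ is immediate from Step 1, since every $Z$ in the orbit satisfies $\rk(Z_{p\times q})=\rk(w_{p\times q})$.

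The only delicate point is the bookkeeping in Step 2, namely ensuring that the clearing operations never destroy earlier pivots. This follows from the order of processing, which I will make explicit by induction on $i$.
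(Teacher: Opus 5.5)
Your proof is correct and is essentially the standard Miller--Sturmfels argument that the paper cites without reproducing; the paper only alludes to it through the ``sweeping downwards and to the right'' description of the action. The argument has three parts: north-west ranks are invariant under the action, elimination reduces any matrix to a partial permutation, and a partial permutation is recovered from its rank array by inclusion--exclusion. When you write out the induction in Step 2, state explicitly that the column operations for row $i$ have zero coefficient on earlier pivot columns, since those columns already have entry $0$ in row $i$; this is what keeps them cleared below their pivots.
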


The orbits of $B^-_k \times B_l$ on $\Mat_{k\times l}$ can then be parametrised by rank arrays: given a partial permutation $w$, $r(w)$ is the $k\times l$ array whose entry at $(p,q)$ is $\rk(w_{p\times q})$.
Since this array describes the upper-left submatrices of $w$, we call this parametrisation \Def{north-west parametrisation}. It follows from Proposition \ref{prop:northwestorbits} that two matrices $Z,Z'$ lie in the same orbit if and only if $r(Z)=r(Z')$.
Furthermore, \cite[Lemma 15.19]{miller2005matrix} and \cite[Theorem 15.31]{miller2005matrix} imply that the orbit of $w'$ under the action of $B^-_k \times B_l$ lies in the closure of the orbit of $w$ (with respect to the Zariski topology on $\Mat_{k\times l}$) if and only if $r(w')\leq r(w)$, where the "less than or equal to" relation is intended componentwise on the rank arrays.

Let us now recall the group action we considered in Definition \ref{def:Baction}. On a matrix $M$ in $\Mat_{n+1}$, this was defined as $(h_1,h_2)\op M = h_2 M h_1^{-1}$, for $(h_1,h_2)\in B_{n+1} \times B_{n+1}$. As observed in Remark \ref{remark:effectofBaction}, the effect of this action is ``sweeping upwards'' and ``sweeping to the right'', meaning that the ranks of the lower-left rectangular submatrices of $M$ are preserved.
In general, however, the group $G^{\iota}_{\bf d} = \prod_{i=1}^{n} B_{n+1}$ of Definition \ref{def:Baction} acts on a tuple of $n-1$ matrices (in ambient dimension $n+1$). Our next goal is to show that the north-west parametrisation for matrix Schubert varieties can be adapted to our context, that is, we want to describe the orbits $\orb^{\iota}_{M^f}$ in terms of ranks of certain submatrices obtained from $M^f = (f_{n+1}^1,\dots,f_{n+1}^{n-1}) \in R^{\iota}_{\bf d}$.

\begin{definition}\label{def:southwestparam}
    Given $M^f = (f_{n+1}^1,\dots,f_{n+1}^{n-1}) \in R^{\iota}_{\bf d}$, we define the \Def{south-west array} of $M^f$ as
    \begin{equation*}
        {\bf s}^f= ({\bf s}^{f_{n+1 }^{j_2}\circ \dots \circ f_{n+1 }^{j_1}})\coloneqq (\rk(f_{n+1 }^{j_2}\circ \dots \circ f_{n+1 }^{j_1})_{p\times q})
    \end{equation*}
    for $1\leq p\leq q \leq n+1$ and $1\leq j_1 \leq j_2 \leq n-1$, where $(f_{n+1 }^{j_2}\circ \dots \circ f_{n+1 }^{j_1})_{p\times q}$ is the lower-left $p\times q$ submatrix of $f_{n+1 }^{j_2}\circ \dots \circ f_{n+1 }^{j_1}$.
\end{definition}

In words, the south-west array of $M^f$ contains the lower-left ranks of all possible compositions of the linear maps that define $M^f$ in $R^{\iota}_{\bf d}$. Since all these matrices are upper-triangular, we only consider the lower-left ranks computed at entries $(p,q)$ for $p\leq q$. In order to better visualise this information, we write the components $({\bf s}^{f_{n+1 }^{j_2}\circ \dots \circ f_{n+1 }^{j_1}})$ of ${\bf s}^f$ as upper-triangular matrices as well, so that each entry contains the value of the corresponding south-west rank.

\begin{example}\label{ex:map011sw}
    We consider again the $(\Gamma,I)$-representation of Example \ref{ex:map011} and write its south-west array:
    \begin{equation*}
M^f:
\begin{tikzcd}[sep=large, ampersand replacement=\&]
\overset{\C}{\bullet} \ar[r, "\begin{bsmallmatrix}
        0
    \end{bsmallmatrix}"] \ar[d, "\iota_{2,1}"]  \& \overset{\C}{\bullet} \ar[d, "\iota_{2,1}"] \\
\overset{\C^2}{\bullet} \ar[ur,phantom, "\scalebox{1.5}{$\circlearrowleft$}"] \ar[r, "{\begin{bsmallmatrix}
        0 & 0\\
        0 & 1
    \end{bsmallmatrix}}"] \ar[d, "\iota_{3,2}"] \& \overset{\C^2}{\bullet}  \ar[d, "\iota_{3,2}"] \\
\overset{\C^3}{\bullet} \ar[ur, phantom,yshift=5, "\scalebox{1.5}{$\circlearrowleft$}"] \ar[r, "{\begin{bsmallmatrix}
        0 & 0 & 0\\
        0 & 1 & 0\\
        0 & 0 & 1
    \end{bsmallmatrix}}"] \& \overset{\C^3}{\bullet}
\end{tikzcd}.
\end{equation*}
In ambient dimension $n+1=3$, the south-west array consists of only one matrix:
\begin{equation*}
    {\bf s}^f=\Round{\begin{bsmallmatrix}
        0 & 1 & 2\\
        * & 1 & 2\\
        * & * & 1
    \end{bsmallmatrix}} .
\end{equation*}
\end{example}

\begin{theorem}\label{thm:secondparam}
    Two representations $M^f, M^{g}$ in $R^{\iota}_{\bf d}$ are in the same orbit under the action of $G^{\iota}_{\bf d}$ given in Definition \ref{def:Baction} if and only if ${\bf s}^f={\bf s}^{g}$.
\end{theorem}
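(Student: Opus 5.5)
The plan is to reduce the statement to Theorem \ref{thm:firstparam}. The direction ($\implies$) will be proved directly, by showing that every entry of ${\bf s}^f$ is invariant under the action of $G^{\iota}_{\bf d}$. The direction ($\impliedby$) will follow by showing that the rank vector ${\bf r}^f$ is a function of the south-west array ${\bf s}^f$. Then ${\bf s}^f={\bf s}^g$ implies ${\bf r}^f={\bf r}^g$, and Theorem \ref{thm:firstparam} gives $M^f\stackrel{B}{\cong}M^g$.

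\textbf{Invariance.} Write $F=f_{n+1}^{j_2}\circ\dots\circ f_{n+1}^{j_1}$. If $M^g=h\op M^f$, the intermediate factors of $h$ cancel, so the corresponding composition for $g$ is $G=h_{j_2+1}Fh_{j_1}^{-1}$ with $h_{j_2+1},h_{j_1}\in B_{n+1}$. I will check two facts about an upper-triangular invertible $b$.
\begin{itemize}
\item The first $q$ columns of $Fb^{-1}$ are the first $q$ columns of $F$ multiplied on the right by the invertible upper-left $q\times q$ block of $b^{-1}$, because $b^{-1}$ preserves $\C^q$.
\item The last $p$ rows of $bF$ are the invertible lower-right $p\times p$ block of $b$ times the last $p$ rows of $F$.
\end{itemize}
Hence the lower-left $p\times q$ submatrix of $G$ is obtained from that of $F$ by left and right multiplication by invertible matrices. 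Its rank is therefore unchanged, so ${\bf s}^f={\bf s}^g$. This is the "sweeping upwards and to the right" observation of Remark \ref{remark:effectofBaction}, extended to compositions.

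\textbf{Recovering ${\bf r}^f$ from ${\bf s}^f$.} The dimension-vector part of ${\bf r}^f$ is the same for every element of $R^{\iota}_{\bf d}$, so only the entries $r^f_{ij_1j_2k}$ need to be recovered. By Remark \ref{remark:subvariety}, each $f_i^j$ is the upper-left $i\times i$ block of $f_{n+1}^j$. Since all these matrices are upper-triangular, the composition $f_i^{j_2}\circ\dots\circ f_i^{j_1}$ is the upper-left $i\times i$ block of $F$. Its image, viewed in $\C^i\subseteq\C^{n+1}$, is the column span $W$ of the first $i$ columns of $F$, because the rows of $F$ below row $i$ vanish on these columns.

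For $k\le i$ the intersection with $\C^k$ is computed as follows. Let $\pi$ denote the projection onto the last $n+1-k$ coordinates, whose kernel is $\C^k$. Then
\[
\dim(W\cap\C^k)=\dim W-\dim \pi(W)=\rk F_{(n+1)\times i}-\rk F_{(n+1-k)\times i},
\]
where both matrices are lower-left submatrices. The case $k=i+1$ is just $\dim W$.

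It remains to express these in terms of the entries recorded in ${\bf s}^f$, which only has $p\le q$. Since column $c\le i$ of $F$ vanishes below row $c$, the lower-left $p\times i$ submatrix with $p>i$ has at most $i$ nonzero rows, namely the last $i$ of its rows, and these rows form the lower-left $i\times i$ submatrix; so its rank equals $\rk F_{i\times i}$. Thus each needed rank is either $\rk F_{\min(p,i)\times i}$, which is an entry of ${\bf s}^f$, or $0$ when $p=0$. This gives the explicit formula for ${\bf r}^f$ in terms of ${\bf s}^f$ and finishes the proof via Theorem \ref{thm:firstparam}.

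\textbf{Main obstacle.} I expect the difficult part to be the bookkeeping in this last step. One has to match the index conventions of Definition \ref{def:rankvector}, where $r_{ij_1j_2k}$ is defined through the compositions $\iota_{i,i-1}\circ\dots\circ\iota_{k,k-1}$ and the images are taken inside $\C^i$, with the lower-left submatrices of $F$. One also has to justify carefully that restricting to the $p\le q$ entries loses no information.
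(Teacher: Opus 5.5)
Your structure matches the paper's proof. You show invariance of ${\bf s}^f$ for ($\implies$), and for ($\impliedby$) you recover ${\bf r}^f$ from ${\bf s}^f$ and apply Theorem~\ref{thm:firstparam}. Your invariance step is correct, and more self-contained than the paper's appeal to \cite[Proposition 15.27]{miller2005matrix}, which concerns $B^-_k\times B_l$. The identity $\dim(W\cap\C^k)=\rk F_{(n+1)\times i}-\rk F_{(n+1-k)\times i}$ is also correct.

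The gap is in the last step. You claim that for $p>i$ the lower-left $p\times i$ submatrix has the same rank as the lower-left $i\times i$ one, and this is false. Columns $1,\dots,i$ of $F$ vanish in rows $i+1,\dots,n+1$. So the rows that can be nonzero are the \emph{top} rows $n+2-p,\dots,i$ of that submatrix, not its bottom $i$ rows. For $F=\id$ and $i=1$, your formula gives $\dim W=\rk F_{1\times 1}=0$ instead of $1$.

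This cannot be patched by better bookkeeping while you read ``$p\times q$'' as ``last $p$ rows, first $q$ columns, $p\le q$''. Under that reading the statement itself fails. In ambient dimension $3$, take $E_{11},E_{12},E_{13}$, the representatives $f^9,f^{14},f^{13}$ of Example~\ref{ex:15reps}. All their ranks of that form are $0$ except the full rank, which is $1$, yet they lie in different orbits.

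The array must be read as in Example~\ref{ex:map011sw}. Its $(p,q)$ entry, for $p\le q$, is the rank $s_{p,q}$ of the submatrix of $F$ on rows $p,\dots,n+1$ and columns $1,\dots,q$. The positions with $p>q$ are omitted because those submatrices vanish identically for upper-triangular $F$. With this reading, the two ranks you need are entries of the array:
\begin{itemize}
\item $r^f_{ij_1j_2k}=s_{1,i}-s_{k+1,i}$ for $k<i$;
\item $r^f_{ij_1j_2k}=s_{1,i}$ when the intersection is with $\C^i$ or larger.
\end{itemize}
This makes explicit the recovery that the paper only asserts, and with it your proof is complete.
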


\begin{proof}
    ($\implies$) If $\orb^{\iota}_{M^{g}}=\orb^{\iota}_{M^f}$, there exists an element $h\in G^{\iota}_{\bf d}$ such that $h \op M^f=M^{g}$, i.e. $h_2 f_{n+1}^1 h_1^{-1}=g_{n+1}^1, \;h_3 f_{n+1}^2 h_2^{-1}=g_{n+1}^2$ and so forth. This means that
    \begin{equation*}
        \orb^{\iota}_{M^{g_{n+1}^j}}=\orb^{\iota}_{M^{f_{n+1}^j}}
    \end{equation*}
    for all $j$, and analogously for all the possible compositions $f_{n+1 }^{j_2}\circ \dots \circ f_{n+1 }^{j_1}$ and $g_{n+1 }^{j_2}\circ \dots \circ g_{n+1 }^{j_1}$. By \cite[Proposition 15.27]{miller2005matrix}, this implies ${\bf s}^f={\bf s}^{g}$.
    
    ($\impliedby$) The claim that $M^f$ and $M^{g}$ lie in the same $G^{\iota}_{\bf d}$-orbit when ${\bf s}^f={\bf s}^{g}$ is a direct consequence of the fact that, if  ${\bf s}^f={\bf s}^{g}$, then ${\bf r}^{f}={\bf r}^{g}$  (where ${\bf r}^{f}$ is the rank vector of Definition \ref{def:rankvector}). This is true because, given a south-west array, the process of recovering the corresponding rank vector yields a unique result.
    %We illustrate the (technical) operations in Figure \ref{fig:algorithm}.
 %To simplify notation, we fix one upper-triangular matrix $g$ and show how the rank vector ${\bf r}^{g}$ is recovered from the south-west array ${\bf s}^g$.
    %The entries of ${\bf r}^{g}$ are given by $r^g_{kl}=\dim(\im(g_k)\cap \im(\iota_{k,k-1}\circ\dots\circ\iota_{l,l-1}))$, where $g_k$ denotes the restriction of $g$ to $\C^k$:
        %Then, the statement follows by applying this construction to $f_{n+1 }^{j_2}\circ \dots \circ f_{n+1 }^{j_1}$ for all $1\leq j_1\leq j_2 \leq n-1$.
\end{proof}

\begin{remark}
Thanks to Theorem \ref{thm:secondparam}, we can compute the number of $B$-orbits in dimension $n+1$ similarly to Remark \ref{remark:numberoforbits}. We sum $n-1$ times (as many as the maps $f_{n+1}^1,\dots,f_{n+1}^{n-1}$ that define $M^f$) the $n+2$-th Bell number, obtaining
\begin{equation*}
    (n-1)b_{n+2}=(n-1) \sum_{k=0}^{n+1} \binom{n+1}{k} b_{k}.
\end{equation*}
\end{remark}

\section{Linear degenerations}\label{sec:lindeg}

In this section, we briefly discuss linear degenerations of flag varieties and recall a few results about them, to then introduce our definition of linear degenerations of type A Schubert varieties. In general, to degenerate means to consider a family of varieties over $\mathbb{A}^1$, such that all fibres over $\mathbb{A}^1\setminus \{ 0\}$ are isomorphic - the general fibres - and their limit is the special fibre over $0$. 
%The term "linear", employed for instance in the study of linear degenerations of flag varieties, refers to the linear conditions that determine the variety: we vary the defining linear maps and describe how the corresponding fibres behave.
In \cite[Proposition 2.7]{cerulliirelli2012quiveranddegenerate}, the authors realise the linear degenerate flag variety as the quiver Grassmannian associated to representations of the equioriented quiver of type $\mathbb{A}_n$. This family of varieties has then been extensively studied, for instance in \cite{cerulli2017linear, fourier2020lineardegenerations}. Our main reference for this section is \cite{cerulli2017linear}, and in particular we will focus on the notion of flatness of a certain morphism of varieties.
Recall that, in Example \ref{ex: flag variety}, we described how to realise the flag variety $\Flag_{n+1}$ as a quiver Grassmannian. This is a special case arising from the following construction.

Let $V$ be a vector space of dimension $n+1$, for $n\geq 1$, and fix a basis $\mathcal{B}=\set{b_1,\dots, b_{n+1}}$. We denote by $f=(f_1,\dots,f_{n-1})$ sequences of linear maps of the form
\begin{equation*}
    \begin{tikzcd}[]
           V \arrow[r, "f_1"] & V \arrow[r, "f_2"] & \dots \arrow[r, "f_{n-2}"] & V \arrow[r, "f_{n-1}"] & V
    \end{tikzcd},
\end{equation*}
which can be seen as closed points of the variety $R=\Hom(V,V)^{n-1}$. As defined in \eqref{eq:Gaction}, the group $G=\GL(V)^n$ acts on $R$ via
\begin{equation*}
    g\op f\coloneqq (g_2 f_1 g_1^{-1}, g_3 f_2 g_2^{-1},\dots,g_n f_{n-1} g_{n-1}^{-1}).
\end{equation*}

We consider then tuples of subspaces $U=(U_1,\dots,U_n)$ in $V$ such that $\dim(U_i)=i$ for $i=1,\dots,n$. These can be seen as closed points of the product of Grassmannians
\begin{equation*}
    Z=\Gr(1,n+1)\times \Gr(2,n+1)\times \dots\times \Gr(n,n+1),
\end{equation*}
on which the group $G$ acts by translation:
\begin{equation*}
    g\op U \coloneqq (g_1U_1,g_2U_2,\dots,g_nU_n).
\end{equation*}
\begin{definition}\label{def:compatible}
    Two tuples $f\in R$ and $U\in Z$ are called \Def{compatible} if $f_i(U_i)\subseteq U_{i+1}$, for $i=1,\dots,n$.
\end{definition}
\begin{definition}\label{def:univlindeg}
    The \Def{universal linear degeneration} of the flag variety $\Flag_{n+1}$ is the variety defined as
    \begin{equation*}
        Y=\{ (f,U): f_i(U_i)\subseteq U_{i+1} \text{ for all } i=1,\dots,n \},
    \end{equation*}
    i.e. the variety of compatible pairs of sequences of maps and sequences of subspaces.
\end{definition}

Now, since $G$ acts on $R$ and $Z$, it acts componentwise on $Y$, and some properties of $Y$ can be derived from the two separate actions on $R$ and $Z$ by considering the fibres of the projections $\pi: Y\to R$ and $p:Y\to Z$. The projection $p:Y\to Z$ is $G$-equivariant, which means that it commutes with the action of $G$: $g\op p((f,U))=p(g\op(f,U))$ for all $g\in G, f\in R$ and $U\in Z$. Moreover, the space $Z$ is a homogeneous space under the $G$-action, meaning that the action is transitive, hence $Y$ is a homogeneous fibration over $Z$. If we fix a tuple $U\in Z$, we can identify its fibre via the projection $p$ with
\begin{equation*}
    \prod_{i_1}^{n-1} (\Hom(U_i,U_{i+1})\oplus \Hom(V_i,V)),
\end{equation*}
where $V_i$ is the complement of $U_i$ in $V$. These facts imply that $Y$ is a homogeneous vector bundle over $Z$, and therefore it is smooth and irreducible.
On the other hand, if we fix a tuple of linear maps $f\in R$ and consider its fibre via the projection $\pi$, we obtain the space consisting of all tuples $U\in Z$ that are compatible with $f$. In other words, each fibre $\pi^{-1}(f)$ can be viewed as a linearly degenerate version of the complete flag variety (which is the fibre over $f=(\id,\id,\dots,\id)$). 

\begin{definition}\cite[Definition 1]{cerulli2017linear}
    For a fixed $f\in R$, we call $\Flag^f_{n+1}\coloneqq \pi^{-1}(f)$ the \Def{$f$-linear degenerate flag variety}, and the map $\pi:Y\to R$ is the \Def{universal linear degeneration} of $\Flag_{n+1}$.
\end{definition}

In \cite{cerulli2017linear}, among other results, the authors use rank tuples to characterise the loci in $R$ over which $\pi$ is flat and where it is flat with irreducible fibres.
We recall that a morphism of varieties is called flat if the induced map on every stalk is a flat map of rings, and that, if a morphism is flat, then its fibres are equidimensional. For the morphism $\pi:Y\to R$, however, the authors exploit the following characterisation of flatness:
\begin{proposition}\cite[Theorem 23.1]{matsumura1989commutative}\label{prop:flatcharacterisation}
    Let $f: X\to Y$ be a morphism of varieties, where $X$ is Cohen-Macaulay and $Y$ is regular. Then, $f$ is flat if and only if its fibres are equidimensional.
\end{proposition}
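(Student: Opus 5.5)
The plan is to reduce the statement to commutative algebra at local rings. Flatness is local on $X$. Throughout I interpret ``equidimensional fibres'' as follows: $X$ and $Y$ are irreducible, and every irreducible component of every non-empty fibre $f^{-1}(y)$ has dimension $\dim X-\dim Y$. This is the reading under which the statement is true, and it is how it is used for $\pi:Y\to R$ in \cite{cerulli2017linear}.

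Fix $x\in X$ and put $y=f(x)$, $A=\mathcal{O}_{Y,y}$ with maximal ideal $\mathfrak{m}$, and $B=\mathcal{O}_{X,x}$. Then $A$ is a regular local ring and $B$ is a Cohen--Macaulay local ring, with a local homomorphism $A\to B$. The fibre ring is $B/\mathfrak{m}B=\mathcal{O}_{f^{-1}(y),x}$, whose dimension is the maximal dimension of a component of the fibre through $x$. For a point $x$ lying on components of the fibre, $\dim B/\mathfrak{m}B$ is therefore governed by the component dimensions. Moreover $\dim B=\dim X-\dim\overline{\{x\}}$, and similarly for $A$, by the dimension theory of varieties. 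Working at closed points, which suffices since the flat locus is open, the condition ``all fibre components through $x$ have dimension $\dim X-\dim Y$'' becomes the numerical identity
\begin{equation*}
\dim B=\dim A+\dim B/\mathfrak{m}B .
\end{equation*}

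For ($\Rightarrow$), assume $A\to B$ is flat. The standard dimension formula for flat local homomorphisms \cite[Theorem 15.1]{matsumura1989commutative} gives exactly the displayed identity at every closed point of $X$. Since $B$ is Cohen--Macaulay, the fibre ring $B/\mathfrak{m}B$ is also Cohen--Macaulay, hence equidimensional. So every component of the fibre through $x$ has dimension $\dim X-\dim Y$, and the fibres are equidimensional.

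For ($\Leftarrow$), choose a regular system of parameters $t_1,\dots,t_d$ of $A$, where $d=\dim A$, so that $\mathfrak{m}=(t_1,\dots,t_d)$. Take elements $u_1,\dots,u_e$ of $B$ whose images form a system of parameters of $B/\mathfrak{m}B$, with $e=\dim B/\mathfrak{m}B$. Then $B/(t_1,\dots,t_d,u_1,\dots,u_e)$ has finite length. By the displayed identity, $d+e=\dim B$, so $t_1,\dots,t_d,u_1,\dots,u_e$ is a system of parameters of $B$. Because $B$ is Cohen--Macaulay, every system of parameters is a regular sequence, and in particular the images of $t_1,\dots,t_d$ form a $B$-regular sequence. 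Consequently the Koszul complex $K_\bullet(t_1,\dots,t_d;B)=K_\bullet(t;A)\otimes_A B$ is acyclic in positive degrees. Since $t$ is $A$-regular, $K_\bullet(t;A)$ is a free resolution of $A/\mathfrak{m}$, so $\operatorname{Tor}^A_1(A/\mathfrak{m},B)=H_1(K_\bullet(t;B))=0$. The local criterion of flatness, applicable because $B$ is a localisation of a finitely generated $A$-algebra and $\mathfrak{m}B$ lies in the maximal ideal of $B$, then yields that $B$ is flat over $A$. Doing this at every closed point gives flatness of $f$ everywhere, since the flat locus is open and contains all closed points.

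The main obstacle is the translation between the geometric hypothesis and the local identity $\dim B=\dim A+\dim B/\mathfrak{m}B$. One has to check that the only inequality available a priori, $\dim B\le\dim A+\dim B/\mathfrak{m}B$, becomes an equality at every closed point under the hypothesis. This requires irreducibility of $X$ and $Y$, or at least equidimensionality of $X$, which holds locally because $X$ is Cohen--Macaulay. It also requires care at points where $f$ is not dominant onto a neighbourhood of $y$. The regular-sequence and Tor argument afterwards is standard.
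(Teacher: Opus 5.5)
Your proof is correct, and it is essentially the argument behind the cited theorem of Matsumura, which the paper quotes without proof. There too, a regular system of parameters of $\mathcal{O}_{Y,y}$ extends, via the identity $\dim B=\dim A+\dim B/\mathfrak{m}B$, to a system of parameters of the Cohen--Macaulay ring $\mathcal{O}_{X,x}$; it is therefore a regular sequence, and the local flatness criterion finishes. Your converse (via the flat dimension formula and Cohen--Macaulayness of the fibre ring) and the passage from closed points to all of $X$ fill in what Matsumura's purely local statement leaves implicit. Your reading of ``equidimensional'' as ``every component of every non-empty fibre has dimension $\dim X-\dim Y$'', with $X,Y$ irreducible, is the right one: the literal statement fails, e.g.\ for the inclusion of a point into $\mathbb{A}^1$.
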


We conclude this section by recalling the parametrisation of the orbits of $G$ in $R$ and reporting the result obtained in \cite{cerulli2017linear} about the flat locus of the morphism $\pi:Y \to R$.

Considering a tuple $f\in R$ is equivalent to choosing a ${\bf d}$-dimensional representation of the $\mathbb{A}_n$ equioriented quiver, for ${\bf d}=(n+1,\dots,n+1)$:
\begin{equation*}
\begin{tikzcd}[]
\overset{\C^{n+1}}{\bullet} \ar[r, "f_1"] & \overset{\C^{n+1}}{\bullet} \ar[r, "f_2"] & ... \ar[r, "f_{n-1}"] & \overset{\C^{n+1}}{\bullet},
\end{tikzcd}
\end{equation*}
and the $f$-linear degenerate flag variety is the quiver Grassmannian associated to this representation.
Then, the parametrisation of the orbits of $G$ in $R$ can be realised as a special case of the parametrisation by rank tuples given in \cite[Proposition 2.7]{abeasis1985degenerations} (see Theorem \ref{thm:paramtypeA}). In this case, the rank tuples corresponding to the orbits are of the form ${\bf r}^f=(r^f_{i,j})_{1\leq i\leq j\leq n-1}$, where $r_{i,j}=\rk(f_j\circ \dots\circ f_i)$: for $f,g$ in $R$, the orbits $\orb_f$ and $\orb_{g}$ coincide if and only if ${\bf r}^f={\bf r}^{g}$.
Additionally, as proven in \cite[Theorem 5.2]{abeasis1985degenerations}, the inclusion relations $\orb_{g}\subseteq \overline{\orb}_f$ (the Zariski closure) can be described using the same parametrisations. Such inclusion relations induce a partial ordering on the set of all $G$-orbits, and this partial ordering can be read off the rank tuples: the relation $\orb_{g}\subseteq \overline{\orb}_f$ holds if and only if ${\bf r}^{g}\leq{\bf r}^f$. In this case, we say that $\orb_f$ \Def{degenerates to} $\orb_{g}$.

\begin{example}\label{ex:extremedegflag}
    The orbit of $f=(\id,\id,\dots,\id)$ is parametrised by $r_{i,j}=n+1$ for all $i,j$, and therefore it degenerates to all other orbits. To isomorphic quiver representations correspond isomorphic quiver Grassmannians, which means that the $g$-linear degenerate flag varieties with $g\in \orb_f$ are all isomorphic to the complete flag variety $\Flag_{n+1}$. Similarly, the orbit of $f=(0,0,\dots,0)$ is parametrised by $r_{i,j}=0$ for all $i,j$, meaning that all other orbits degenerate to this one.
\end{example}

\begin{theorem}\cite[Theorem 3]{cerulli2017linear}\label{thm:flagsflatlocus}
    The flat locus of $\pi$ in $R$ is the union of all orbits degenerating to the orbit of ${\bf r}^2$, where  $r^2_{i,j}=n-j+i$ for all $i<j$.
\end{theorem}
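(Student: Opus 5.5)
The plan is to reduce flatness to a statement about fibre dimensions and then compare fibre dimensions along the degeneration order of orbits.

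\textbf{Step 1: flatness as a fibre-dimension bound.} Earlier in this section $Y$ is shown to be a homogeneous vector bundle over $Z$, hence smooth (in particular Cohen--Macaulay) and irreducible, and $R=\Hom(V,V)^{n-1}$ is an affine space, hence regular. By Proposition \ref{prop:flatcharacterisation}, $\pi$ is flat at $f$ if and only if the fibres near $f$ are equidimensional of the expected dimension $\dim Y-\dim R$. From the fibre of $p$ over a point of $Z$ one computes $\dim Y-\dim R=\dim Z+\sum_i(\text{fibre rank})-(n-1)(n+1)^2=\dim \Flag_{n+1}=n(n+1)/2$. Every irreducible component of $\pi^{-1}(f)$ has dimension at least $\dim Y-\dim R$, because $Y$ is irreducible and $R$ is smooth. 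Hence flatness at $f$ is equivalent to
\[
\dim \Flag^f_{n+1}\le n(n+1)/2 .
\]

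\textbf{Step 2: the flat locus is a union of orbits closed under generisation.} Fibre dimension is upper semicontinuous on $R$, so the set $R^{\flat}=\{f:\dim\pi^{-1}(f)\le n(n+1)/2\}$ is open. It is also $G$-stable, since $G$-translates give isomorphic quiver Grassmannians. Using the orbit-closure description of \cite[Theorem 5.2]{abeasis1985degenerations}, it then suffices to prove two things. First, the orbit $\orb_{f^2}$ with rank tuple ${\bf r}^2$ lies in $R^{\flat}$; openness then puts every orbit whose closure contains $\orb_{f^2}$, i.e.\ every $f$ with ${\bf r}^f\ge{\bf r}^2$, into $R^{\flat}$. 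Second, no orbit with ${\bf r}^f\not\ge{\bf r}^2$ lies in $R^{\flat}$.

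\textbf{Step 3: the orbit ${\bf r}^2$.} I would pick $f^2$ with each $f_i$ of corank one and the kernels in general position, so that $\rk(f_j\circ\dots\circ f_i)=n+1-(j-i+1)$. In terms of the equioriented $\mathbb A_n$-quiver, $M^{f^2}$ is then a direct sum of projectives and injectives. To bound the dimension of $\Gr_{\bf e}(M^{f^2})$, stratify by the isomorphism class of the subrepresentation $N$. The stratum of $N$ has dimension at most $\dim\Hom(N,M/N)=\langle{\bf e},{\bf d}-{\bf e}\rangle+\dim\Ext^1(N,M/N)$, and $\langle{\bf e},{\bf d}-{\bf e}\rangle=n(n+1)/2$. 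The refined form I would actually use is $\dim\Hom(N,M)-\dim\Aut N$. For $M$ projective--injective of this shape, I would check stratum by stratum that this quantity never exceeds $n(n+1)/2$.

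\textbf{Step 4: the converse.} Suppose $\rk(f_j\circ\dots\circ f_i)<n-j+i$ for some $i<j$. Then the composite $f_j\circ\dots\circ f_i$ has a kernel of dimension at least $j-i+2$. In that case I would construct an explicit family of compatible tuples $U$ whose dimension exceeds $n(n+1)/2$. The idea is to choose $U_i,\dots,U_{j}$ inside the iterated kernels, where the compatibility conditions are vacuous, so that the resulting Grassmannian freedom outweighs the constraints imposed on the other $U_k$. The count reduces to comparing $\dim\Gr(k,\ker)$-type contributions against the conditions they relax.

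\textbf{Main obstacle.} I expect Step 3 to be the hardest part: a uniform dimension estimate for all strata of the quiver Grassmannian at ${\bf r}^2$, which needs careful handling of the $\Ext^1$ terms. My first attempt would be to bound $\dim\Ext^1(N,M/N)$ against $\dim\End(N)-\dim\Hom(N,N')$-type corrections, using the explicit indecomposable summands of $M^{f^2}$.
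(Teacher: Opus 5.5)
Your Steps 1--2 are sound: miracle flatness with $Y$ smooth and $R$ an affine space, the lower bound $n(n+1)/2$ on every component of every fibre, upper semicontinuity of fibre dimension for the proper map $\pi$, and $G$-stability. The gap is that Steps 3--4 target the wrong orbit, and the claim of Step 4 is false.

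Your $f^2$ (each $f_i$ of corank one, a composite of $k$ consecutive maps of corank $k$) is the PBW degeneration. That orbit governs flatness with \emph{irreducible} fibres; the flat locus is strictly larger. The formula $r^2_{i,j}=n-j+i$ comes from \cite{cerulli2017linear}, where $r_{i,j}$ is the rank of the composite $f_{j-1}\circ\dots\circ f_i\colon V_i\to V_j$ of $j-i$ maps. In this paper's indexing $r_{i,j}=\rk(f_j\circ\dots\circ f_i)$ it becomes $n-1-j+i$. So every single $f_i$ has corank two, and a composite of $k$ consecutive maps has corank $k+1$.

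You can check this directly. Let $f_i$ have corank $\kappa$, put $f_k=\id$ for $k\neq i$, and stratify $\Flag^f_{n+1}$ by $a=\dim(U_i\cap\ker f_i)$.
\begin{itemize}
\item The two partial flag varieties of $(U_1,\dots,U_i)$ and $(U_{i+1},\dots,U_n)$ have total dimension $n(n+1)/2+i(n-i)$.
\item The condition on $a$ has codimension $a(n+1-i-\kappa+a)$.
\item The condition $f_i(U_i)\subseteq U_{i+1}$ has codimension $(i-a)(n-i)$.
\end{itemize}
Hence the stratum has dimension $n(n+1)/2+a(\kappa-1-a)$. For $\kappa=2$ the strata $a=0,1$ both have dimension exactly $n(n+1)/2$, and $a=2$ has less. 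So the fibre is equidimensional but reducible, and $f$ lies in the flat locus although $\rk f_i=n-1$ is below your threshold. For $\kappa\geq 3$ some stratum exceeds $n(n+1)/2$. Similarly, for $n+1=4$ a tuple with $\rk f_1=\rk f_2=2$ and $\rk(f_2f_1)=1$ has a $6$-dimensional fibre. This directly contradicts your Step 4 for $(i,j)=(1,2)$.

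So the argument has to be rebuilt around the correct orbit, and neither Step 3 nor Step 4 carries over. For $f$ in that orbit, $M^f\cong\bigoplus_{i=1}^nU_{[1,i]}\oplus\bigoplus_{i=2}^nU_{[i,n]}\oplus\bigoplus_{k=1}^nU_{[k,k]}$, writing $U_{[a,b]}$ for interval modules of the equioriented $\mathbb{A}_n$ quiver. This is the PBW representation with one copy of the projective-injective $U_{[1,n]}$ replaced by all simples. For $n\geq 3$ it is not a sum of projectives and injectives, and its fibre is reducible. So the stratum bound on $\dim\Hom(N,M)-\dim\Aut(N)$ must be proved with these simples present, and you have so far only announced that bound, not proved it.

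For the converse you need the corrected hypothesis: some composite of $k$ consecutive maps has corank at least $k+2$. By semicontinuity, you then need an actual dimension count for each orbit maximal among those violating it. Placing $U_i,\dots,U_j$ inside iterated kernels is not such a count, and those containments are usually impossible for dimension reasons. The computation above shows what does work: let $U_i$ meet the kernel in a small positive dimension $a$. This trades the codimension $a(n+1-i-\kappa+a)$ on $U_i$ against the $a(n-i)$ conditions on $U_{i+1}$ that are relaxed, for a net gain of $a(\kappa-1-a)$.
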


\subsection{Linear degenerations of Shubert varieties}\label{sec:lindegschubvar}

In this section, we define linear degenerations of Schubert varieties exploiting their realisation - or their desingularisation - as quiver Grassmannians described in \cite{iezzi2025quiver}. In Section \ref{sec:qGandschubvar}, we recalled the quiver with relations $(\Gamma,I)$ and its representation $M$,
\begin{comment}
\begin{equation*}
\begin{tikzcd}[sep=large]
\overset{\C}{\bullet} \ar[r, "\id"] \ar[d, "\iota_{2,1}"]  & \overset{\C}{\bullet} \ar[r, "\id"] \ar[d, "\iota_{2,1}"] & ...\ar[r, "\id"] \ar[d, "\iota_{2,1}"] & \overset{\C}{\bullet} \ar[d, "\iota_{2,1}"] \\
\overset{\C^2}{\bullet} \ar[ur, phantom, "\scalebox{1.5}{$\circlearrowleft$}"] \ar[r, "\id"] \ar[d, "\iota_{3,2}"] & \overset{\C^2}{\bullet} \ar[ur, phantom, "\scalebox{1.5}{$\circlearrowleft$}"] \ar[r, "\id"] \ar[d, "\iota_{3,2}"]  & ... \ar[ur, phantom,xshift=5, "\scalebox{1.5}{$\circlearrowleft$}"] \ar[r, "\id"] \ar[d, "\iota_{3,2}"] & \overset{\C^2}{\bullet} \ar[d, "\iota_{3,2}"] \\
... \ar[ur, phantom,xshift=5, "\scalebox{1.5}{$\circlearrowleft$}"] \ar[r, "\id"] \ar[d, "\iota_{n+1,n}"] & ... \ar[ur, phantom,xshift=5, "\scalebox{1.5}{$\circlearrowleft$}"] \ar[r, "\id"] \ar[d, "\iota_{n+1,n}"] & ... \ar[ur, phantom, xshift=5, "\scalebox{1.5}{$\circlearrowleft$}"] \ar[r, "\id"] \ar[d,yshift=7, "\iota_{n+1,n}"] & ... \ar[d, "\iota_{n+1,n}"]\\
\overset{\C^{n+1}}{\bullet} \ar[ur, phantom, "\scalebox{1.5}{$\circlearrowleft$}"] \ar[r, "\id"] & \overset{\C^{n+1}}{\bullet} \ar[ur, phantom, "\scalebox{1.5}{$\circlearrowleft$}"] \ar[r, "\id"] & ... \ar[ur, phantom, xshift=5,yshift=5, "\scalebox{1.5}{$\circlearrowleft$}"] \ar[r, "\id"] & \overset{\C^{n+1}}{\bullet}\\
\end{tikzcd},
\end{equation*}
\end{comment}
then considered in Section \ref{sec:subvarietyR} the subvariety $R^{\iota}_{\bf d}$ inside the variety $ R_{\bf d}$ of all representations of $(\Gamma,I)$ of dimension vector ${\bf d}$. As described in Remark \ref{remark:subvariety}, the elements of this subvariety are described by tuples of linear maps $f=(f_{n+1}^1,\dots,f_{n+1}^{n-1})\in \prod_{j=1}^{n-1} U_{n+1}$, where $U$ is the subgroup of $\Mat_{n+1}$ of upper-triangular matrices with respect to the chosen basis $\mathcal{B}=\set{b_1, b_2,\dots, b_{n+1}}$ of $\C^{n+1}$. The representation corresponding to $f$ can then be visualised as follows:
\begin{equation*}
\begin{tikzcd}[sep=large]
\overset{\C}{\bullet} \ar[r, "\textcolor{forestgreen}{f_{1}^1}"] \ar[d, "\iota_{2,1}"]  & \overset{\C}{\bullet} \ar[r, "\textcolor{forestgreen}{f_{1}^2}"] \ar[d, "\iota_{2,1}"] & ...\ar[r, "\textcolor{forestgreen}{f_{1}^{n-1}}"] \ar[d, "\iota_{2,1}"] & \overset{\C}{\bullet} \ar[d, "\iota_{2,1}"] \\
\overset{\C^2}{\bullet} \ar[ur, phantom, "\scalebox{1.5}{$\circlearrowleft$}"] \ar[r, "\textcolor{forestgreen}{f_{2}^1}"] \ar[d, "\iota_{3,2}"] & \overset{\C^2}{\bullet} \ar[ur, phantom, "\scalebox{1.5}{$\circlearrowleft$}"] \ar[r, "\textcolor{forestgreen}{f_{2}^2}"] \ar[d, "\iota_{3,2}"]  & ... \ar[ur, phantom,xshift=5, "\scalebox{1.5}{$\circlearrowleft$}"] \ar[r, "\textcolor{forestgreen}{f_{2}^{n-1}}"] \ar[d, "\iota_{3,2}"] & \overset{\C^2}{\bullet} \ar[d, "\iota_{3,2}"] \\
... \ar[ur, phantom,xshift=5, "\scalebox{1.5}{$\circlearrowleft$}"] \ar[r, "\textcolor{forestgreen}{f_{i}^1}"] \ar[d, "\iota_{n+1,n}"] & ... \ar[ur, phantom,xshift=5, "\scalebox{1.5}{$\circlearrowleft$}"] \ar[r, "\textcolor{forestgreen}{f_{i}^j}"] \ar[d, "\iota_{n+1,n}"] & ... \ar[ur, phantom, xshift=5, "\scalebox{1.5}{$\circlearrowleft$}"] \ar[r, "\textcolor{forestgreen}{f_{i}^{n-1}}"] \ar[d,yshift=5, "\iota_{n+1,n}"] & ... \ar[d, "\iota_{n+1,n}"]\\
\overset{\C^{n+1}}{\bullet} \ar[ur, phantom, "\scalebox{1.5}{$\circlearrowleft$}"] \ar[r, "\textcolor{forestgreen}{f_{n+1}^1}"] & \overset{\C^{n+1}}{\bullet} \ar[ur, phantom, "\scalebox{1.5}{$\circlearrowleft$}"] \ar[r, "\textcolor{forestgreen}{f_{n+1}^2}"] & ... \ar[ur, phantom, xshift=5,yshift=5, "\scalebox{1.5}{$\circlearrowleft$}"] \ar[r, "\textcolor{forestgreen}{f_{n+1}^{n-1}}"] & \overset{\C^{n+1}}{\bullet}\\
\end{tikzcd},
\end{equation*}

\vspace{-1cm}

where each map $f_{i}^j$ is represented by the appropriate submatrix of $f_{n+1}^j$ (i.e., it is the restriction of $f_{n+1}^j$ to $\C^i$).

In \cite{iezzi2025quiver}, we gave two different constructions for two dimension vectors for the quiver $(\Gamma,I)$.
The dimension vector ${\bf r}^w$ was defined as
\begin{equation*}
  r^w_{i,j}\coloneqq \#\set{k\leq j : w(k)\leq i}, \quad i=1,\dots,n+1,\quad j=1,\dots,n
\end{equation*}
and allowed us to recover the Bott-Samelson resolution of the Schubert variety $X_w$ via the quiver Grassmannian $\Gr_{{\bf r}^w}(M)$.
The second dimension vector, denoted by ${\bf e}^w$ and employed only when $X_w$ is a smooth variety, was defined by
\begin{equation*}
\begin{cases}
    e^w_{i,j}\coloneqq r^w_{i,j} & \text{ if } r^w_{i,j} =\min \{ i,j \} \\
    & \text{ or } r^w_{i,j} =0\\
    e^w_{i,j}\coloneqq \max \{ e^w_{i-1,j}, e^w_{i,j-1}  \} & \text{ if } 0 < r^w_{i,j} <\min \{ i,j \}
\end{cases}
\end{equation*}
and used to find an explicit isomorphism between the quiver Grassmannian $\Gr_{{\bf e}^w}(M)$ and the considered Schubert variety.
The goal of this section is to introduce linear degenerations of Schubert varieties; for this purpose, we will define a universal linear degeneration of (the considered Schubert variety) $X_w$, whose specific construction will depend on the fixed permutation $w$.
For permutations $w$ such that $X_w$ is a singular variety, we will employ the corresponding dimension vector ${\bf r}^w$. For permutations $w$ that yield a smooth Schubert variety $X_w$, instead, the dimension vector to be considered to construct linear degenerations of $X_w$ is ${\bf e}^w$. However, in order to simplify notation throughout the construction, we will simply denote by ${\bf e}^w$ the dimension vector that corresponds to the fixed permutation $w$, without subdividing the notation into cases.

Let us now fix a permutation $w$ in $S_{n+1}$ and denote by $Z$ the product of Grassmannians
\begin{equation*}
    Z=\prod_{i,j}\Gr(e^w_{i,j},\C^i),
\end{equation*}
for $i=1,\dots,n+1$ and $j=1,\dots,n$, where ${\bf e}^w$ is the dimension vector constructed from $w$, and by $U=(U_{ij})$ a closed point in $Z$. We consider the action of the group $G^{\iota}_{\bf d}\coloneqq \prod_{i=1}^{n} B_{n+1}$ on $R^{\iota}_{\bf d}$ given in Definition \ref{def:Baction}:
\begin{equation*}
    h \op f= (h_2 f_{n+1}^1 h_1^{-1}, h_3 f_{n+1}^2 h_2^{-1}, \dots, h_n f_{n+1}^{n-1} h_{n-1}^{-1})
\end{equation*}
for some $h \in G^{\iota}_{\bf d}$. This action is then extended to the other linear maps $f_{i}^j$, for $i<n+1$, as explained in Remark \ref{remark:actionrestriction}: each $f_{i}^j$ is acted upon by the restrictions of the maps $(h_1,\dots,h_n)$ to $\C^i$.
The group $G^{\iota}_{\bf d}$ acts on $Z$ by translation:
\begin{equation*}
    h\op U\coloneqq (h_1 U_{n+11}, h_2 U_{n+12},\dots,h_n U_{n+1n})
\end{equation*}
and this action is extended to the other subspaces $U_{ij}$, for $i<n+1$, by letting the restrictions of the maps $(h_1,\dots,h_n)$ to $\C^i$ act on $U_{ij}$ by translation.

\begin{definition}
    We call a pair $(f,U)$ \Def{compatible} if $f_{\alpha}(U_{s(\alpha)})\subseteq U_{t(\alpha)}$ for all arrows $\alpha$ in $(\Gamma,I)_1$.
\end{definition}

 %We would like to remark that, even though at first glance it might seem unnecessary to describe this trivial extension of the $G^{\iota}_{\bf d}$-action, considering the whole quiver $(\Gamma,I)$ instead of only its last row is fundamental from a geometrical point of view: whether $U$ is compatible with some fixed $f$ depends on all subspaces $U_{i,j}$, which are realised in (potentially) any row of $(\Gamma,I)$, according to the entries of ${\bf e}^w$.

\begin{definition}
    The \Def{universal linear degeneration} of the Schubert variety $X_w$ is the variety defined as
     \begin{equation*}
        Y=\{ (f,U): f_{\alpha}(U_{s(\alpha)})\subseteq U_{t(\alpha)} \text{ for all } \alpha \in  (\Gamma,I)_1\}.
    \end{equation*}
\end{definition}

We can then consider the two projections $\pi: Y\to R^{\iota}_{\bf d}$ and $p: Y\to Z$.
The action of $G^{\iota}_{\bf d}$ on $R^{\iota}_{\bf d}$ and $Z$ induces the action of $G^{\iota}_{\bf d}$ on $Y$, and the projection $p$ is $G^{\iota}_{\bf d}$-equivariant, but in this case the action of  $G^{\iota}_{\bf d}$ on $Z$ is not transitive. %This means that we cannot employ the same tools as for the universal linear degeneration of the flag variety in order to deduce smoothness or irreducibility of $Y$.
Now, if we fix a tuple of maps $f$ in $R^{\iota}_{\bf d}$ and consider its fibre via the projection $\pi$, we obtain the space consisting of all $U\in Z$ that are compatible with $f$. This means that each fibre $\pi^{-1}(f)$ can be viewed as a linearly degenerate version of the fixed Schubert variety $X_w$, which is itself the fibre over $f=(\id,\id,\dots,\id)$.

\begin{definition}\label{def:lindegschubvar}
    For a fixed $f\in R^{\iota}_{\bf d}$, we call $X^f_w\coloneqq \pi^{-1}(f)$ the \Def{$f$-linear degenerate Schubert variety}, and the map $\pi:Y\to  R^{\iota}_{\bf d}$ is the \Def{universal linear degeneration} of $X_w
    $.
\end{definition}

In other words, the $f$-linear degenerate Schubert variety is defined as the quiver Grassmannian $\Gr_{{\bf e}^w}(M^f)$, where $M^f$ is the $(\Gamma,I)$-representation associated to $f$ in the sense of Section \ref{sec:subvarietyR}. By comparing the definition of linear degenerations of flag varieties and our definition of linear degenerations of Schubert varieties, the restriction we operated in Section \ref{sec:subvarietyR} on the considered $(\Gamma,I)$-representations appears now more reasonable. We kept the vertical maps fixed as standard inclusions and varied the horizontal maps, meaning that we degenerate the conditions defining each flag in $\Flag_{n+1}$ but not the combinatorial conditions that determine which flags belong to $X_w$.

In Section \ref{sec:parametrisations}, we provided two different parametrisations for the  $G^{\iota}_{\bf d}$-orbits in $R^{\iota}_{\bf d}$: the rank vectors, in Definition \ref{def:rankvector}, and the south-west arrays, in Definition \ref{def:southwestparam}.
Furthermore, the south-west parametrisation allows us to describe the inclusion relations of the form $\orb^{\iota}_{M^{g}} \subseteq \overline{\orb}^{\iota}_{M^f}$: we derive the following corollary from  \cite[Lemma 15.19]{miller2005matrix} and \cite[Theorem 15.31]{miller2005matrix}.

\begin{corollary}\label{cor:orbitdegen}
     The orbit $\orb^{\iota}_{M^{g}}$ of $M^{g}$ under the action of $G^{\iota}_{\bf d}$ lies in the closure of the orbit $\orb^{\iota}_{M^f}$ of $M^f$ (with respect to the Zariski topology on $\Mat_{n+1}$) if and only if ${\bf s}^f\leq {\bf s}^{g}$, where the "less than or equal to" relation is intended componentwise on the south-west arrays. In this case, we write $\orb^{\iota}_{M^{g}} \subseteq \overline{\orb}^{\iota}_{M^f}$ and we say that $\orb^{\iota}_{M^{f}}$ degenerates to $\orb^{\iota}_{M^{g}}$. 
\end{corollary}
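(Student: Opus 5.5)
The plan is to reduce the statement to the closure-order result of Miller and Sturmfels for one matrix at a time, applied to every composition $f_{n+1}^{j_2}\circ\dots\circ f_{n+1}^{j_1}$. We then glue the single-matrix conditions together using the orbit description of Theorem \ref{thm:secondparam}. The first step is a change of coordinates. Let $w_0$ be the antidiagonal permutation matrix of size $n+1$. Conjugating rows by $w_0$, i.e.\ $Z\mapsto w_0Z$, turns the $B_{n+1}\times B_{n+1}$-action $h_2Zh_1^{-1}$ on $\Mat_{n+1}$ into the $B^-_{n+1}\times B_{n+1}$-action $(w_0h_2w_0)(w_0Z)h_1^{-1}$ of Proposition \ref{prop:northwestorbits}. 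It also turns lower-left $p\times q$ submatrices into upper-left ones. Hence every entry of ${\bf s}^f$ becomes a north-west rank $r(\cdot)_{p\times q}$ of the matrix $w_0(f_{n+1}^{j_2}\circ\dots\circ f_{n+1}^{j_1})$. This map is a linear automorphism of $\Mat_{n+1}$, so it is a homeomorphism for the Zariski topology. Orbit closures therefore correspond exactly.

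The second step is the forward implication. Assume $\orb^{\iota}_{M^g}\subseteq\overline{\orb}^{\iota}_{M^f}$. Each composition map $c_{j_1j_2}:f\mapsto f_{n+1}^{j_2}\circ\dots\circ f_{n+1}^{j_1}$ is a polynomial map $R^{\iota}_{\bf d}\to U_{n+1}$. It is equivariant for $h\mapsto(h_{j_1},h_{j_2+1})$, because the intermediate factors $h_kh_k^{-1}$ cancel. So $c_{j_1j_2}(g)$ lies in the closure of the $B\times B$-orbit of $c_{j_1j_2}(f)$. By \cite[Lemma 15.19]{miller2005matrix} and \cite[Theorem 15.31]{miller2005matrix}, transported through $w_0$ as above, we obtain the componentwise relation between the south-west ranks of $c_{j_1j_2}(g)$ and $c_{j_1j_2}(f)$. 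Collecting this over all $j_1\le j_2$ and all $p\le q$ yields the relation between ${\bf s}^f$ and ${\bf s}^g$. We read the relation in the orientation fixed in the statement, namely with the same convention the paper uses when it recalls the Miller--Sturmfels criterion.

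The third step, the converse, is the main obstacle. Knowing the relation for each composition separately gives, for each $(j_1,j_2)$, that $c_{j_1j_2}(g)$ lies in the closure of the orbit of $c_{j_1j_2}(f)$. It does not immediately give that the whole tuple $g$ lies in $\overline{\orb}^{\iota}_{M^f}$, because the group elements $h_j$ are shared between consecutive maps. My first approach is to pass to normal forms. By Theorem \ref{thm:secondparam} and Theorem \ref{thm:firstparam}, $M^g$ is determined up to $B$-isomorphism by its decomposition into the $U^{(h_1,\dots,h_n)}$ of Theorem \ref{thm:indecompU}. I would then show that a single elementary move on ${\bf s}$ is realised by an explicit one-parameter family $f(t)$ inside $R^{\iota}_{\bf d}$ with $f(t)\in\orb^{\iota}_{M^f}$ for $t\neq0$ and $f(0)\in\orb^{\iota}_{M^g}$. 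Such a move changes one entry of one composition's array, together with the entries forced by it. The family would be built by moving one pivot of a partial-permutation normal form, in the manner of the Bruhat-type covering relations in \cite[Chapter 15]{miller2005matrix}, while keeping the other factors fixed. The relation between ${\bf s}^f$ and ${\bf s}^g$ would then be decomposed into a chain of such moves. Transitivity of orbit-closure inclusion gives the result. The delicate point is checking that each pivot move in one factor $f_{n+1}^j$ can be compensated in the adjacent factors, so that every intermediate tuple still realises a valid south-west array. If this fails in general, I would instead use the Abeasis--Del Fra closure criterion \cite[Theorem 5.2]{abeasis1985degenerations}, applied to each row subquiver of type $\mathbb{A}$ as in the proof of Theorem \ref{thm:firstparam}, as an alternative route to the gluing.
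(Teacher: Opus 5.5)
\paragraph{The forward direction is fine, but check the orientation.} Your forward (``only if'') argument is sound. Each composition map $c_{j_1j_2}$ is continuous and equivariant, so $c_{j_1j_2}(g)$ lies in the $B_{n+1}\times B_{n+1}$-orbit closure of $c_{j_1j_2}(f)$. Lower semicontinuity of the lower-left ranks, or Miller--Sturmfels after your $w_0$-twist, then gives ${\bf s}^{g}\le{\bf s}^{f}$. This is also all that the paper's own justification, a citation of \cite[Lemma 15.19, Theorem 15.31]{miller2005matrix}, actually covers, since that is a one-matrix statement.

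However, ${\bf s}^{g}\le{\bf s}^{f}$ is the \emph{opposite} of the printed ${\bf s}^f\le{\bf s}^g$, and the printed version is reversed. Compare Example~\ref{ex:ordering}, or take all $f^j_{n+1}=\id$ and $g=0$. You should state this explicitly, rather than claiming that the statement and the recalled Miller--Sturmfels convention agree.

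\paragraph{The converse is a genuine gap, and it cannot be filled.} The ``if'' direction is false already for $n+1=4$, so no gluing argument can work. Let
\[
f_1=\begin{bsmallmatrix}0&0&0&0\\0&0&0&1\\0&0&0&1\\0&0&0&1\end{bsmallmatrix},\qquad
g_1=\begin{bsmallmatrix}0&0&0&0\\0&0&0&1\\0&0&0&0\\0&0&0&1\end{bsmallmatrix},\qquad
f_2=\begin{bsmallmatrix}0&0&1&-1\\0&1&0&0\\0&0&0&0\\0&0&0&0\end{bsmallmatrix},
\]
and put $f=(f_1,f_2)$ and $g=(g_1,f_2)$.
\begin{itemize}
\item \emph{Equal arrays.} The matrices $f_1$ and $g_1$ vanish outside the fourth column, and that column has nonzero last entry. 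The products $f_2f_1$ and $f_2g_1$ also vanish outside the fourth column, which is $(0,1,0,0)^T$, resp.\ $(-1,1,0,0)^T$. Hence every lower-left rank of $f_1,f_2,f_2f_1$ equals the corresponding one of $g_1,f_2,f_2g_1$, that is, ${\bf s}^f={\bf s}^g$.
\item \emph{An invariant separating them.} Let $\iota\colon\C^2\to\C^4$ be the inclusion and $\rho(f)\coloneqq\rk\,[\,f_2f_1\mid f_2\iota\,]$. Let $\beta$ be the upper-left $2\times 2$ block of $h_2^{-1}$; then $h_2^{-1}\iota=\iota\beta$, so
\[
[\,h_3f_2f_1h_1^{-1}\mid h_3f_2h_2^{-1}\iota\,]=h_3\,[\,f_2f_1\mid f_2\iota\,]\,\mathrm{diag}(h_1^{-1},\beta),
\]
and $\rho$ is constant on $G^{\iota}_{\bf d}$-orbits. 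It is also lower semicontinuous.
\item \emph{Conclusion.} We have $\rho(f)=1$ and $\rho(g)=2$. Hence $g\notin\overline{\orb}^{\iota}_{M^f}$ although ${\bf s}^f={\bf s}^g$, so the ``if'' direction fails in either orientation.
\end{itemize}
Conceptually, $\rho=\dim(\im f_1+\C^2+\ker f_2)-\dim\ker f_2$ is a three-subspace invariant at the middle column. By contrast, ${\bf s}$ only records pairwise intersection dimensions of the filtrations $f_1(\C^j)$, $\C^k$ and $f_2^{-1}(\C^l)$.

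\paragraph{Consequences for your plan.} The same pair gives two distinct $G^{\iota}_{\bf d}$-orbits with equal south-west arrays; their rank vectors also coincide. So the normal-form step of your plan, which rests on Theorem~\ref{thm:secondparam} (and Theorem~\ref{thm:firstparam}), is not available. The Abeasis--Del Fra fallback cannot help either. The obstruction is a rank condition mixing two different compositions, $f_2f_1$ and $f_2$ restricted to $\C^2$, and it is invisible to every single row and every single composition. Any correct closure criterion would need at least such mixed block ranks in addition to ${\bf s}$.
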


\begin{example}\label{ex:extremedeg}
    Analogously to Example \ref{ex:extremedegflag}, the orbit of $\id=(\id,\id,\dots,\id)$ is described by the largest possible south-west ranks, and therefore it degenerates to all other orbits. Since isomorphic quiver representations yield isomorphic quiver Grassmannians, the $g$-linear degenerate Schubert varieties with $g\in \orb_f$ are all isomorphic to the Schubert variety $X_w$. The ``most degenerate'' orbit is that of $0=(0,0,\dots,0)$: it is parametrised by ${\bf s}_{i,j}=0$ for all $i,j$, hence all other orbits degenerate to this one. In this case, the linear maps corresponding to the horizontal arrows of $(\Gamma,I)$ are not imposing any conditions between the subspaces $U_{i,j}$ that belong to different columns of $(\Gamma,I)$. This means that the 0-linear degenerate Schubert variety is given by the product of $n$ partial flag varieties (one for each column of $(\Gamma,I)$), where the dimensions of the free subspaces are given by the corresponding entries of ${\bf e}^w$.
\end{example}

\begin{example}\label{ex:ordering}
    Consider again the $(\Gamma,I)$-representation of Example \ref{ex:map011sw}:
       \begin{equation*}
f:
\begin{tikzcd}[sep=large, ampersand replacement=\&]
\overset{\C}{\bullet} \ar[r, "\begin{bsmallmatrix}
        0
    \end{bsmallmatrix}"] \ar[d, "\iota_{2,1}"]  \& \overset{\C}{\bullet} \ar[d, "\iota_{2,1}"] \\
\overset{\C^2}{\bullet} \ar[ur,phantom, "\scalebox{1.5}{$\circlearrowleft$}"] \ar[r, "{\begin{bsmallmatrix}
        0 & 0\\
        0 & 1
    \end{bsmallmatrix}}"] \ar[d, "\iota_{3,2}"] \& \overset{\C^2}{\bullet}  \ar[d, "\iota_{3,2}"] \\
\overset{\C^3}{\bullet} \ar[ur, phantom,yshift=5, "\scalebox{1.5}{$\circlearrowleft$}"] \ar[r, "{\begin{bsmallmatrix}
        0 & 0 & 0\\
        0 & 1 & 0\\
        0 & 0 & 1
    \end{bsmallmatrix}}"] \& \overset{\C^3}{\bullet}
\end{tikzcd},
\end{equation*}
whose rank vector and south-west array are, respectively:
\begin{equation*}
{\bf r}^f=(0,0,1,0,1,2), \quad
    {\bf s}^f=\Round{\begin{bsmallmatrix}
        0 & 1 & 2\\
        * & 1 & 2\\
        * & * & 1
    \end{bsmallmatrix}}.
\end{equation*}
The $(\Gamma,I)$-representation
    \begin{equation*}
g:
\begin{tikzcd}[sep=large, ampersand replacement=\&]
\overset{\C}{\bullet} \ar[r, "\begin{bsmallmatrix}
        0
    \end{bsmallmatrix}"] \ar[d, "\iota_{2,1}"]  \& \overset{\C}{\bullet} \ar[d, "\iota_{2,1}"] \\
\overset{\C^2}{\bullet} \ar[ur,phantom, "\scalebox{1.5}{$\circlearrowleft$}"] \ar[r, "{\begin{bsmallmatrix}
        0 & 1\\
        0 & 0
    \end{bsmallmatrix}}"] \ar[d, "\iota_{3,2}"] \& \overset{\C^2}{\bullet}  \ar[d, "\iota_{3,2}"] \\
\overset{\C^3}{\bullet} \ar[ur, phantom,yshift=5, "\scalebox{1.5}{$\circlearrowleft$}"] \ar[r, "{\begin{bsmallmatrix}
        0 & 1 & 0\\
        0 & 0 & 0\\
        0 & 0 & 0
    \end{bsmallmatrix}}"] \& \overset{\C^3}{\bullet}
\end{tikzcd},
\end{equation*}
is evidently not in the same orbit as $f$, because it has rank equal to one instead of two (the rank of a matrix is the same as its south-west rank $s_{1,n+1}$).
Its rank vector and south-west array are:
\begin{equation*}
{\bf r}^{g}=(0,1,1,1,1,1), \quad
    {\bf s}^{g}=\Round{\begin{bsmallmatrix}
        0 & 1 & 1\\
        * & 0 & 0\\
        * & * & 0
    \end{bsmallmatrix}}.
\end{equation*}
The relation between the two south-west arrays is ${\bf s}^{g}\leq {\bf s}^f$, which implies $\orb^{\iota}_{M^{g}} \subseteq \overline{\orb}^{\iota}_{M^f}$. Notice that comparing their rank vectors would not yield the same result: in fact, ${\bf r}^{g}$ and ${\bf r}^f$ are not comparable. An example of a $(\Gamma,I)$-representation that is neither above nor below $f$ in the poset of the $G^{\iota}_{\bf d}$-orbits is
 \begin{equation*}
h:
\begin{tikzcd}[sep=large, ampersand replacement=\&]
\overset{\C}{\bullet} \ar[r, "\begin{bsmallmatrix}
        1
    \end{bsmallmatrix}"] \ar[d, "\iota_{2,1}"]  \& \overset{\C}{\bullet} \ar[d, "\iota_{2,1}"] \\
\overset{\C^2}{\bullet} \ar[ur,phantom, "\scalebox{1.5}{$\circlearrowleft$}"] \ar[r, "{\begin{bsmallmatrix}
        1 & 0\\
        0 & 0
    \end{bsmallmatrix}}"] \ar[d, "\iota_{3,2}"] \& \overset{\C^2}{\bullet}  \ar[d, "\iota_{3,2}"] \\
\overset{\C^3}{\bullet} \ar[ur, phantom,yshift=5, "\scalebox{1.5}{$\circlearrowleft$}"] \ar[r, "{\begin{bsmallmatrix}
        1 & 0 & 0\\
        0 & 0 & 0\\
        0 & 0 & 0
    \end{bsmallmatrix}}"] \& \overset{\C^3}{\bullet}
\end{tikzcd}.
\end{equation*}
Its south-west array is ${\bf s}^{h}=\Round{\begin{bsmallmatrix}
        1 & 1 & 1\\
        * & 0 & 0\\
        * & * & 0
    \end{bsmallmatrix}}$, which is not comparable to ${\bf s}^{f}$. This example highlights a difference between the south-west parametrisation and the parametrisation of the $G$-orbits, for $G=\GL(V)^n$, and consequently between their posets. In the case of the $G$-action, it is enough to compare the ranks of (all compositions of) the matrices, while, for comparing $G^{\iota}_{\bf d}$-orbits, we need to check which entries of the matrices actually contribute to the rank.
\end{example}

\begin{example}
In Example \ref{ex:15reps}, we listed the standard representatives of the fifteen $B$-orbits in dimension $n+1=3$. The relations between (the closures of) such $B$-orbits are represented by the following poset:
    \begin{equation*}
        \begin{tikzcd}[]
        & \overset{M^{f^1}}{\bullet} \\
            \overset{M^{f^2}}{\bullet} \ar[ur, dash] & \overset{M^{f^4}}{\bullet} \ar[u, dash] & \overset{M^{f^3}}{\bullet} \ar[ul, dash] \\
            \overset{M^{f^6}}{\bullet} \ar[u, dash] \ar[ur, dash] & \overset{M^{f^5}}{\bullet} \ar[ul, dash] \ar[ur, dash] & \overset{M^{f^8}}{\bullet} \ar[u, dash] \ar[ul, dash]\\
            \overset{M^{f^{10}}}{\bullet} \ar[u, dash] & \overset{M^{f^{7}}}{\bullet} \ar[ul, dash] \ar[u, dash] \ar[ur, dash] & \overset{M^{f^{9}}}{\bullet} \ar[ul, dash] & \overset{M^{f^{11}}}{\bullet} \ar[ul, dash] \\
            & \overset{M^{f^{14}}}{\bullet} \ar[u, dash] \ar[ul, dash] \ar[ur, dash] & \overset{M^{f^{12}}}{\bullet} \ar[ull, dash] \ar[ul, dash] \ar[ur, dash] \\
            & \overset{M^{f^{13}}}{\bullet} \ar[u, dash] \ar[ur, dash] \\
            & \overset{M^{f^{15}}}{\bullet} \ar[u, dash]
        \end{tikzcd}
    \end{equation*}
\end{example}

\begin{remark}
    The ordering induced by the inclusion relations between $G^{\iota}_{\bf d}$-orbits is a refinement of the  ordering induced by the inclusion relations between $G$-orbits, in the sense that elements that are equal under the second ordering might not be equal under the first one. This means that the ordering induced by the $G^{\iota}_{\bf d}$-action is stronger than that induced by the $G$-action: if $a\leq b$ holds under the first ordering, then it holds under the second one.
\end{remark}

\begin{remark}
    In some special cases, computing the dimension of the quiver Grassmannian $\Gr_{{\bf e}^w}(M^f)$ (that is, of the $f$-linear degenerate Schubert variety) is particularly easy. If $f=0=(0,\dots,0)$, as explained in Example \ref{ex:extremedeg}, then $\Gr_{{\bf e}^w}(M^f)$ is a product of partial flag varieties: its dimension is  the sum of the dimensions of each partial flag variety. In general, however, the resulting variety is much more complicated, and so is computing its dimension. We only have a bound on the dimensions of its irreducible components: for any $f\in R^{\iota}_{\bf d}$ and ${\bf d}=\dimvec M^{\id}$, all irreducible components of $\Gr_{{\bf e}^w}(M^f)$ have dimension at least $\langle {\bf e}^w, {\bf d}-{\bf e}^w \rangle$, which is the dimension of $\Gr_{{\bf e}^w}(M^{\id})$ by \cite[Corollary 4.6]{iezzi2025quiver}.
\end{remark}

Before making a few more examples of linear degenerate Schubert varieties and opening the discussion about the flat locus of the morphism $\pi:Y\to  R^{\iota}_{\bf d}$, we would like to complete the description of the south-west parametrisation. We know that a tuple $f\in R^{\iota}_{\bf d}$ is parametrised by its south-west array, but which tuples of non-negative integers are the south-west array of some tuple $f\in R^{\iota}_{\bf d}$?
The simple answer to this question follows directly from the definition of south-west arrays: if we consider a south-west array ${\bf s}$ as a matrix in $U\subseteq \Mat_{n+1}$ (an upper-triangular matrix), then this matrix is parametrised by itself. In other words, a matrix $A$ whose entry $a_{i,j}$ is a south-west rank has precisely those $a_{i,j}$ as its south-west ranks.
This means that, in order to determine whether a tuple of non-negative integers is a south-west array, we can check if it parametrises itself. Nonetheless, we can write down the conditions that determine if a tuple of non-negative integers ${\bf \omega}$ is a south-west array of some $f\in R^{\iota}_{\bf d}$, in ambient dimension $n+1$.
Let us denote the entries of ${\bf \omega}$ by $w^{i,j}_{a,b}$; imposing the following conditions implies that $w^{i,j}_{a,b}$ is the south-west rank computed at entry $(i,j)$ of the matrix given by $f_b\circ\dots\circ f_a$, for $1\leq a \leq b \leq n-1$ and $1\leq i \leq j\leq n+1$:
\begin{equation}\label{eq:sufficientcond}
    \begin{cases}
        w^{i,j}_{a,b} \leq \min \{ j-i+1,j\}\\
        w^{i,j}_{a,b}\leq \min \{ w^{i,n}_{a,a+t}, w^{1,j}_{a+t,b}\}\\
        p_{i,j}\leq w^{i,j}_{a,b} \leq p_{i,j} +1
    \end{cases}
\end{equation}
for all $a<t<b$, where $p_{i,j}$ is the number of pivots in the south-west $i\times j$ submatrix and $ w^{i,j}_{a,b}= p_{i,j} +1$ is only allowed if there are no pivots in row $i$ or column $j$.
The first inequality is necessary for $w^{i,j}_{a,b}$ to be the rank of the (upper-triangular) $i\times j$ submatrix, while the second inequality represents the fact that the rank of a product ($w^{i,j}_{a,b}$) cannot be bigger than each of the ranks of the two factors; since $w^{i,j}_{a,b}$ is the result of the composition $f_b\circ\dots\circ f_a$, this has to hold independently of how we apply associativity.
The last condition ensures that $w^{i,j}_{a,b}$ counts the number of pivots in the $i \times j$ submatrix and that it does not increase by more than one in each row and column.
We omit the analogous discussion for rank vectors, since it is of technical nature and falls outside the main concerns of this paper.

\section{On the flat locus of linear degenerations of Schubert varieties}\label{sec:flatlocus}

In this section, we open the question of the flat locus of the morphism $\pi:Y\to  R^{\iota}_{\bf d}$. We would like to exploit the characterisation of the flat locus given in Proposition \ref{prop:flatcharacterisation}; we know that $R^{\iota}_{\bf d}$ is regular (it is a smooth subgroup of the group $\GL_{n+1}^n$), but we need to show that $Y$ is a Cohen-Macaulay variety.
A variety $X$ is called Cohen-Macaulay if for every point $x\in X$ there exists an affine open neighbourhood $U\subset X$ of $x$ such that the ring of regular functions $\orb_X(U)$ is Noetherian and Cohen-Macaulay.
More information on Cohen-Macaulay rings can be found, for instance, in \cite{bruns1998cohen}. However, in order to prove that $Y$ is Cohen-Macaulay, we can apply the following:

\begin{lemma}\cite[Lemma 10.135.3]{stack10.135}\label{lemma:completeint}
    Let $\K$ be a field. Let $S$ be a finite type $\K$-algebra. If $S$ is locally a complete intersection, then $S$ is a Cohen-Macaulay ring.
\end{lemma}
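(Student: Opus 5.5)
The plan is to reduce everything to local algebra and use two standard facts: regular local rings are Cohen--Macaulay, and a quotient of a Cohen--Macaulay local ring by a regular sequence is again Cohen--Macaulay. Since $S$ is of finite type over $\K$, choose a presentation $S\cong \K[x_1,\dots,x_m]/I$ and write $P=\K[x_1,\dots,x_m]$. By definition, $S$ is a Cohen--Macaulay ring if $S_{\mathfrak q}$ is a Cohen--Macaulay local ring for every prime $\mathfrak q\subset S$. So I fix a prime $\mathfrak q$ of $S$ and let $\mathfrak p\subset P$ be its preimage, so that $S_{\mathfrak q}\cong P_{\mathfrak p}/IP_{\mathfrak p}$.

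\textbf{Step 1 (the ambient local ring).} I will show that $P_{\mathfrak p}$ is Cohen--Macaulay. Since $P$ is a polynomial ring over a field, it is regular, so $P_{\mathfrak p}$ is a regular local ring. A regular system of parameters of $P_{\mathfrak p}$ is then a regular sequence of length $\dim P_{\mathfrak p}$, which gives the claim.

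\textbf{Step 2 (unpacking the lci hypothesis).} The hypothesis "locally a complete intersection" will be used in its standard form: after possibly shrinking to a principal open neighbourhood, $IP_{\mathfrak p}$ is generated by $c=\operatorname{ht}(IP_{\mathfrak p})$ elements $f_1,\dots,f_c$. Equivalently, in the Stacks formulation, it is generated by a Koszul-regular sequence. I will show that $f_1,\dots,f_c$ is a $P_{\mathfrak p}$-regular sequence.
\begin{enumerate}
\item In the Koszul-regular formulation, this follows because over a Noetherian local ring Koszul-regular sequences are regular.
\item In the height formulation, I use that a Cohen--Macaulay local ring $A$ satisfies $\operatorname{ht}(J)+\dim A/J=\dim A$ for every ideal $J$. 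Hence elements $f_1,\dots,f_c$ generating an ideal of height $c$ are part of a system of parameters, and in a Cohen--Macaulay ring every part of a system of parameters is a regular sequence.
\end{enumerate}

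\textbf{Step 3 (conclusion).} Let $A$ be a Cohen--Macaulay local ring and $x$ an $A$-regular element in the maximal ideal. Then $\operatorname{depth}(A/xA)=\operatorname{depth}A-1$ and $\dim(A/xA)=\dim A-1$, so $A/xA$ is Cohen--Macaulay. Applying this inductively along $f_1,\dots,f_c$ shows that $S_{\mathfrak q}=P_{\mathfrak p}/(f_1,\dots,f_c)$ is Cohen--Macaulay. Since $\mathfrak q$ was arbitrary, $S$ is a Cohen--Macaulay ring.

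The main obstacle is Step 2. One has to reconcile the particular definition of local complete intersection being used (Koszul-regular generators versus number of generators equal to the height) with the statement that the generators form an honest regular sequence in $P_{\mathfrak p}$. I would handle it by first passing to the local ring $P_{\mathfrak p}$, where Noetherianity makes the two notions agree, and then invoking the unmixedness/system-of-parameters characterisation of Cohen--Macaulay rings from Step 1.
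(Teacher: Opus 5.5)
Your local-algebra backbone (regular local rings are Cohen--Macaulay; in a Cohen--Macaulay local ring, $c$ elements that lower the dimension by $c$ form a regular sequence; quotients by regular sequences stay Cohen--Macaulay) is exactly what the cited Stacks proof uses, and Steps 1 and 3 are fine. The gap is in Step 2. You assume the hypothesis in a form that the cited source does not hand you, and the translation you skip is precisely the content of the cited proof.

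In the Stacks section the paper cites, $S$ is a local complete intersection over $\K$ if $\operatorname{Spec} S$ is covered by principal opens $D(g_i)$ with $S_{g_i}\cong \K[x_1,\dots,x_n]/(f_1,\dots,f_c)$ and $\dim S_{g_i}=n-c$. This is not phrased via Koszul-regular sequences; that is a different, relative definition, whose agreement with this one is proved separately. So the reconciliation you single out as the main obstacle is not the relevant one. What is actually missing is the following.
\begin{enumerate}
\item The definition gives a presentation of $S_{g_i}$, not of $S$. You should therefore take $P$ to be that polynomial ring rather than a fixed global $P/I$; otherwise you need an independence-of-presentation lemma.
\item From the global condition $\dim S_{g_i}=n-c$ you still have to deduce that $(f_1,\dots,f_c)P_{\mathfrak p}$ has height $c$, i.e.\ that the $f_j$ are part of a system of parameters of $P_{\mathfrak p}$.
\end{enumerate}

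Both points are quickly repaired. The cited proof only checks maximal ideals, which suffices because localisations of Cohen--Macaulay local rings are Cohen--Macaulay. Take a maximal ideal $\mathfrak m$ with $g_i\notin\mathfrak m$, and let $\mathfrak m'$ be the corresponding maximal ideal of $P=\K[x_1,\dots,x_n]$. The ring $P_{\mathfrak m'}$ is regular of dimension $n$. Krull's height theorem gives $\dim S_{\mathfrak m}\geq n-c$, while $\dim S_{\mathfrak m}\leq\dim S_{g_i}=n-c$. So equality holds, the $f_j$ are part of a system of parameters, and your Steps 2(2) and 3 finish the argument.

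If you prefer to work at every prime, show instead that every minimal prime $\mathfrak P$ of $(f_1,\dots,f_c)$ has height exactly $c$. It is at most $c$ by Krull. It is at least $c$ because $n-\operatorname{ht}\mathfrak P=\dim P/\mathfrak P\leq\dim S_{g_i}=n-c$ in a polynomial ring over a field. Hence $\operatorname{ht}\bigl((f_1,\dots,f_c)P_{\mathfrak p}\bigr)=c$ for every prime $\mathfrak p\supseteq(f_1,\dots,f_c)$. With either addition, or if one simply adopts your height condition as the definition, the argument is complete.
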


In particular, we would like to make use of an analogous strategy to the one employed in the proof of Theorem 11 in \cite{cerulli2017linear}: here, the authors show that the $f$-linear degenerate flag variety (for $f$ such that ${\bf r}^f={\bf r}^2$, as in Theorem \ref{thm:flagsflatlocus}) is locally a complete intersection. In order to do so, we realise our quiver Grassmannians as geometric quotients of the appropriate varieties; we follow \cite[Section 2.3]{cerulliirelli2012quiveranddegenerate}.

Let ${\bf e}^w$ be as above and $f\in  R^{\iota}_{\bf d}$ such that $\dim(\Gr_{{\bf e}^w}(M^f))=\dim(X_w)$ (this is a necessary condition for flatness). We define the vector space
\begin{equation*}
    V\coloneqq R_{{\bf e}^w}\times \prod_{(p,q)\in (\Gamma,I)_0} \Hom(\C^{e^w_{p,q}}, \C^i),
\end{equation*}
where $R_{{\bf e}^w}$ is the variety of representations of $(\Gamma,I)$ with dimension vector ${\bf e}^w$. We denote the elements of $V$ by $((N_{\alpha}),(g_{p,q}))$, where $\alpha$ varies over all arrows in $(\Gamma,I)_1$ and $(p,q)$ corresponds to a vertex of $(\Gamma,I)$.
Then, we consider the affine variety $\Hom({\bf e}^w, M^f)$ in $V$ consisting of tuples $((N_{\alpha}),(g_{p,q}))$ that satisfy
\begin{equation}\label{eq:homvariety}
    \begin{cases}
        f_{i}^j\circ g_{i,j}=g_{i,j+1}\circ N_{\alpha}\; \text{ for }\alpha :s(\alpha)=(i,j), t(\alpha)=(i,j+1)\\
        \iota_{i+1,i}\circ g_{i,j}=g_{i+1,j}\circ N_{\alpha}\; \text{ for }\alpha : s(\alpha)=(i,j), t(\alpha)=(i+1,j)
    \end{cases}.
\end{equation}
 
In words, we consider all possible tuples $((N_{\alpha}),(g_{p,q}))$ where $N$ is a $(\Gamma,I)$-representation of dimension vector ${\bf e}^w$ and $g=(g_{p,q})$, for $(p,q)\in (\Gamma,I)_0$, is a morphism of representations from $N$ to $M^f$. The relations in \eqref{eq:homvariety} impose all commutativity relations that are necessary for $g$ to be a morphism of representations. Figure \ref{fig:3Ddiagram} illustrates the diagram in ambient dimension $n+1=3$: the representations $N$ and $M^f$ satisfy the relations given by $(\Gamma,I)$, while the commutativity of all other squares in the diagram, which involve the morphism $g$, is imposed by the relations in \eqref{eq:homvariety}.

\begin{figure}[h]
    \centering  
\begin{tikzcd}[sep=small]
 & & \C^{e^w_{1,1}} \arrow[dl, swap, "N_{\alpha_2}"] \arrow[rrr, "N_{\alpha_1}"] \arrow[dd, shorten >= -3, "g_{1,1}"] & & & \C^{e^w_{1,2}} \arrow[dl, swap, "N_{\alpha_4}"] \arrow[dd, "g_{1,2}"] \\
 & \C^{e^w_{2,1}} \arrow[dl,swap, "N_{\alpha_5}"] \arrow[rrr, crossing over, "N_{\alpha_3}"] \arrow[dd, yshift=-2,"g_{2,1}"] & & & \C^{e^w_{2,2}} \arrow[dl, swap, "N_{\alpha_7}"] \\
|[yshift=17]| \C^{e^w_{3,1}} \arrow[rrr,crossing over, "N_{\alpha_6}"] \arrow[dd, "g_{3,1}"] & & \C \arrow[dl,swap, "\iota_{2,1}"] \arrow[rrr, "f_1^1"] & |[yshift=17]| \C^{e^w_{3,2}} & & \C \arrow[dl, "\iota_{2,1}"] \\
& \C^2 \arrow[rrr, "f_2^1"] \arrow[dl, swap, "\iota_{3,2}"] & & & \C^2 \arrow[from=uu, crossing over, "g_{2,2}"] \arrow[dl, "\iota_{3,2}"]\\
\C^3 \arrow[rrr, "f_3^1"] & & & \C^3 \arrow[from=uu, crossing over, "g_{3,2}"]
 \end{tikzcd}
 \caption{The diagram formed by $M^f$, $N$ and $g$}
\label{fig:3Ddiagram}
\end{figure}
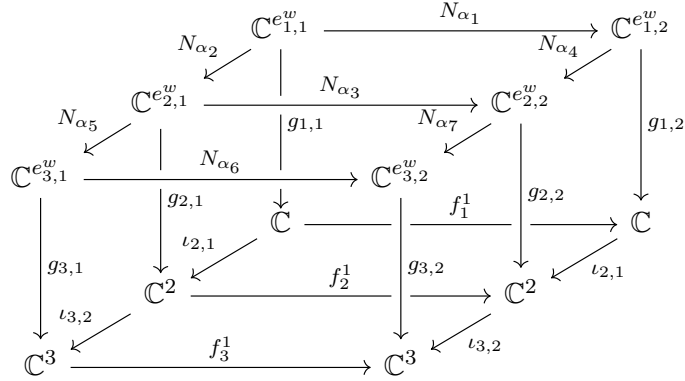

Then, the quiver Grassmannian $\Gr_{{\bf e}^w}(M^f)$ can be realised as the following geometric quotient:
\begin{equation*}
    \Gr_{{\bf e}^w}(M^f)\cong \Hom^0({\bf e}^w, M^f)/G_{{\bf e}^w},
\end{equation*}

where $G_{{\bf e}^w}=\prod_{(i,j)\in (\Gamma,I)_0} \GL(\C^{e^w_{i,j}})$, i.e. the product of the general linear groups acting naturally on the vector spaces of $N\in  R_{{\bf e}^w}$, and $\Hom^0({\bf e}^w, M^f)$ is the open subvariety in $\Hom({\bf e}^w, M^f)$ defined by $g_{p,q}$ being an injective map, for all $(p,q)\in (\Gamma,I)_0$. Since $\Hom^0({\bf e}^w, M^f)$ and $\Hom({\bf e}^w, M^f)$ have the same codimension in $V$ (because the first one is an open subvariety of the second one), it is enough to show that $\Hom^0({\bf e}^w, M^f)$ is locally a complete intersection.

In order to do so, we want to make use of a dimension formula arising from the quiver Grassmannian being a geometric quotient, namely that
\begin{equation}
    \dim(\Gr_{{\bf e}^w}(M^f))=\dim(\Hom^0({\bf e}^w, M^f))-\dim(G_{{\bf e}^w}).
\end{equation}

This would allow us to deduce the codimension of $\Hom^0({\bf e}^w, M^f)$ in $V$ and to compare it with the number of equations defining $\Hom({\bf e}^w, M^f)$ in $V$: if we obtain the same number, then $\Gr_{{\bf e}^w}(M^f)$ is locally a complete intersection. In this case, by Lemma \ref{lemma:completeint}, we would be able to describe completely the flat locus of $\pi : Y\to R^{\iota}_{\bf d}$: it would consist precisely of all $f \in R^{\iota}_{\bf d}$ such that $\dim(\Gr_{{\bf e}^w}(M^f))=\dim(X_w)$.

\begin{conjecture}\label{conj:flatlocus}
    A tuple $f$ is in the flat locus of $\pi : Y\to R^{\iota}_{\bf d}$ if and only if $\dim(\Gr_{{\bf e}^w}(M^f))=\dim(X_w)$.
\end{conjecture}

We present now an example in ambient dimension $n+1=3$ motivating our conjecture.

Let us fix the permutation $w=[231]\in S_3$ and the representation determined by the identity map $f$:
\begin{equation*}
f:
\begin{tikzcd}[sep=large, ampersand replacement=\&]
\overset{\C}{\bullet} \ar[r, "\begin{bsmallmatrix}
        1
    \end{bsmallmatrix}"] \ar[d, "\iota_{2,1}"]  \& \overset{\C}{\bullet} \ar[d, "\iota_{2,1}"] \\
\overset{\C^2}{\bullet} \ar[ur,phantom, "\scalebox{1.5}{$\circlearrowleft$}"] \ar[r, "{\begin{bsmallmatrix}
        1 & 0\\
        0 & 1
    \end{bsmallmatrix}}"] \ar[d, "\iota_{3,2}"] \& \overset{\C^2}{\bullet}  \ar[d, "\iota_{3,2}"] \\
\overset{\C^3}{\bullet} \ar[ur, phantom,yshift=5, "\scalebox{1.5}{$\circlearrowleft$}"] \ar[r, "{\begin{bsmallmatrix}
        1 & 0 & 0\\
        0 & 1 & 0\\
        0 & 0 & 1
    \end{bsmallmatrix}}"] \& \overset{\C^3}{\bullet}
\end{tikzcd}.
\end{equation*}
The dimension vector ${\bf e}^w$ is then
\begin{equation*}
    {\bf e}^w=\begin{bsmallmatrix}
        0 & 0\\
        1 & 1\\
        1 & 2
    \end{bsmallmatrix},
\end{equation*}
and the quiver Grassmannian $\Gr_{{\bf e}^w}(M^f))$ is isomorphic to the Schubert variety $X_{[231]}$: it is a smooth projective variety of dimension $\ell(w)=2$.
We choose this representation specifically because we know that it is in the flat locus of $\pi : Y\to R^{\iota}_{\bf d}$ (its fibre is the nondegenerate Schubert variety).

\begin{remark}
    If we choose a permutation in $S_3$ with length strictly smaller than $2$, then $\Gr_{{\bf e}^w}(M^f))=\Gr_{{\bf e}^w}(M))$ independently of the representation $f$. This can be seen by computing the corresponding dimension vectors
    \begin{equation*}
        {\bf e}^{\id}=\begin{bsmallmatrix}
        1 & 1\\
        1 & 2\\
        1 & 2
    \end{bsmallmatrix},\;
    {\bf e}^{[213]}=\begin{bsmallmatrix}
        0 & 1\\
        1 & 2\\
        1 & 2
    \end{bsmallmatrix},\;
    {\bf e}^{[132]}=\begin{bsmallmatrix}
        1 & 1\\
        1 & 1\\
        1 & 2
    \end{bsmallmatrix}
    \end{equation*}
    and noticing that all free subspaces are realised in the first column of $(\Gamma,I)$, meaning that the linear maps between the first and second column do not affect the quiver Grassmannian.
\end{remark}

We consider now the geometric quotient
\begin{equation*}
    \Gr_{{\bf e}^w}(M^f)\cong \Hom^0({\bf e}^w, M^f)/G_{{\bf e}^w},
\end{equation*}

and want to show that $\Hom^0({\bf e}^w, M^f)$ is locally a complete intersection. Figure \ref{fig:3Ddiagramexample} represents $M^f$ and the diagram it forms together with a representation in $R_{{\bf e}^w}$ and a morphism $g$.

\begin{figure}[h]
    \centering  
\begin{tikzcd}[ampersand replacement= \&]
 \& \& 0 \arrow[dl, swap, "{[0]}"] \arrow[rrr, "{[0]}"] \arrow[dd, shorten >= -3, "{[0]}"] \& \& \& 0 \arrow[dl, swap, "{[0]}"] \arrow[dd, "{[0]}"] \\
 \& \C \arrow[dl,swap, "N_2"] \arrow[rrr, crossing over, "N_1"] \arrow[dd, yshift=-4, swap, "g_{2,1}"] \& \& \& \C \arrow[dl, swap, "N_4"] \\
|[yshift=17]| \C \arrow[rrr,crossing over, "N_3"] \arrow[dd, "g_{3,1}"] \& \& \C \arrow[dl,swap, "\iota_{2,1}"] \arrow[rrr, "f_1^1"] \& |[yshift=17]| \C^2 \& \& \C \arrow[dl, "\iota_{2,1}"] \\
\& \C^2 \arrow[rrr, "f_2^1"] \arrow[dl, swap, "\iota_{3,2}"] \& \& \& \C^2 \arrow[from=uu, crossing over, "g_{2,2}"] \arrow[dl, "\iota_{3,2}"]\\
\C^3 \arrow[rrr, "f_3^1"] \& \& \& \C^3 \arrow[from=uu,yshift=2, crossing over, "g_{3,2}"]
 \end{tikzcd}
 \caption{The diagram formed by $M^f$, $N$ and $g$ in the example}
\label{fig:3Ddiagramexample}
\end{figure}
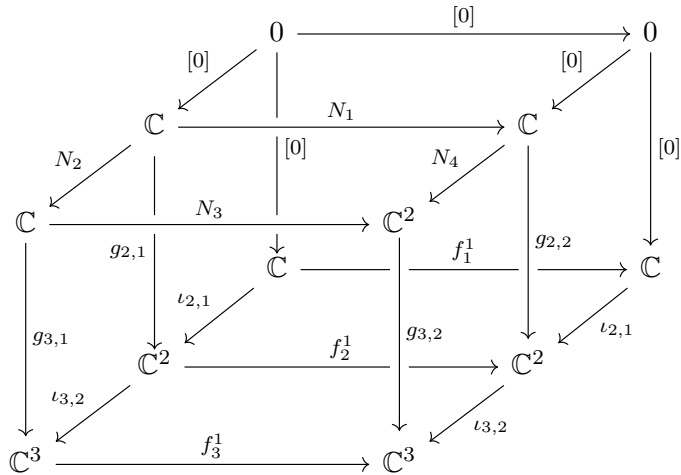

In this case, the group $G_{{\bf e}^w}$ is
\begin{equation*}
      G_{{\bf e}^w}=\prod_{(i,j)\in (\Gamma,I)_0} \GL(\C^{e^w_{i,j}})= \GL(\C) \times \GL(\C) \times \GL(\C) \times \GL(\C^2)
\end{equation*}

and has therefore dimension $\dim(G_{{\bf e}^w})=7$. For a generic dimension vector ${\bf e}^w$, this dimension is given by
\begin{equation*}
    \dim(G_{{\bf e}^w})=\sum_{(i,j)\in (\Gamma,I)_0} (e^w_{i,j})^2.
\end{equation*}

Since $\dim(\Gr_{{\bf e}^w}(M^f))=2,$ the dimension of $\Hom^0({\bf e}^w, M^f)$ is
\begin{equation}\label{eq:dimHom0}
    \dim(\Hom^0({\bf e}^w, M^f))= \dim(\Gr_{{\bf e}^w}(M^f)) + \dim(G_{{\bf e}^w}) = 7+2=9.
\end{equation}

In order to compute the codimension of $\Hom^0({\bf e}^w,M^f)$ in $V$, we first need the dimension of $V$, which is
\begin{equation*}
    \dim(V)=\dim(R_{{\bf e}^w}) + \dim\Bigl(\prod_{(p,q)\in (\Gamma,I)_0} \Hom(\C^{e^w_{p,q}}, \C^i)\Bigr).
\end{equation*}

For the second summand, we have
\begin{equation*}
    \prod_{(p,q)\in (\Gamma,I)_0} \Hom(\C^{e^w_{p,q}}, \C^i)= \Hom(\C,\C^2)\times \Hom(\C,\C^2) \times \Hom(\C,\C^3) \times \Hom(\C^2,\C^3) 
\end{equation*}
and therefore
\begin{equation}\label{eq:dimhomspace}
    \dim \Bigl( \prod_{(p,q)\in (\Gamma,I)_0} \Hom(\C^{e^w_{p,q}}, \C^i)\Bigr)=13.
\end{equation}

To find the dimension of $R_{{\bf e}^w}$, we first observe that it is the subvariety of representations of $(\Gamma,I)$ with dimension vector ${\bf e}^w$ that satisfy the commutativity relations. This means
\begin{equation*}
    R_{{\bf e}^w}\subset \Hom(\C,\C) \times \Hom(\C,\C) \times \Hom(\C,\C^2) \times \Hom(\C,\C^2),
\end{equation*}
that is, $R_{{\bf e}^w}$ is a subvariety of a six-dimensional variety of representations.

In particular, since $e^w_{1,1}=e^w_{1,2}=0$, a representation $N$ in $R_{{\bf e}^w}$ is determined by four linear maps $N_1,N_2,N_3$ and $N_4$ of the following form:
\begin{equation*}
\begin{tikzcd}[]
\overset{\C}{\bullet} \ar[r, "N_1"] \ar[d, "N_2"]  & \overset{\C}{\bullet} \ar[d, "N_4"] \\
\overset{\C}{\bullet} \ar[ur,phantom, "\scalebox{1.5}{$\circlearrowleft$}"] \ar[r, "N_3"] & \overset{\C^2}{\bullet}
\end{tikzcd},
\end{equation*}
and we can write the matrices representing these linear maps as

\begin{equation*}
    N_1=[x_1], N_2=[x_2], N_3=\begin{bsmallmatrix}
        x_3\\
        x_4
    \end{bsmallmatrix}, N_4= \begin{bsmallmatrix}
        x_5\\
        x_6
    \end{bsmallmatrix}.
\end{equation*}

The commutativity relation implies that $N\in R_{{\bf e}^w}$ if and only if the two independent relations $x_3x_2=x_5x_1$ and $x_4x_2=x_6x_1$ are satisfied. Thus, the dimension of $R_{{\bf e}^w}$ is
\begin{equation}\label{eq:dimRe^w}
    \dim(R_{{\bf e}^w})= 6-2 =4.
\end{equation}

Now, we put together \eqref{eq:dimhomspace} and \eqref{eq:dimRe^w} and obtain $\dim(V)=13+4=17$. From this and from \eqref{eq:dimHom0}, it follows that the codimension of $\Hom^0({\bf e}^w,M^f)$ in $V$ is equal to $\dim(V)-\dim(\Hom^0({\bf e}^w,M^f))=17-9=8$.

The space $\Hom^0({\bf e}^w,M^f)$ is locally a complete intersection if its codimension in $V$ is equal to the number of equations that define $\Hom({\bf e}^w,M^f)$ in $V$. A point $((N_{\alpha}),(g_{p,q}))$ is in $\Hom({\bf e}^w,M^f)$ if and only if it satisfies the relations given in \eqref{eq:homvariety}, which, in the case of this example, are
\begin{equation}\label{eq:11relations}
    \begin{cases}
        f_2^1 g_{2,1} = g_{2,2} N_1 \\
        \iota_{3,2} g_{2,1} = g_{3,1} N_2 \\
        f_3^1 g_{3,1} = g_{3,2} N_3 \\
        \iota_{3,2} g_{2,2} = g_{3,2} N_4
    \end{cases}.
\end{equation}

We represent the linear maps $g_{p,q}$ with the following matrices:
\begin{equation*}
    g_{2,1}=\begin{bsmallmatrix}
        y_1\\
        y_2
    \end{bsmallmatrix}, g_{3,1}=\begin{bsmallmatrix}
        y_3\\
        y_4\\
        y_5
    \end{bsmallmatrix}, g_{3,2}=\begin{bsmallmatrix}
        y_6 & y_7\\
        y_8 & y_9\\
        y_{10} & y_{11}
    \end{bsmallmatrix}, g_{2,2}=\begin{bsmallmatrix}
        y_{12}\\
        y_{13}
    \end{bsmallmatrix}
\end{equation*}
and make the relations in \eqref{eq:11relations} explicit, obtaining the following 11 relations:
\begin{equation}\label{eq:11relationsexplicit}
    \begin{cases}
        \begin{bsmallmatrix}
        y_1\\
        y_2
    \end{bsmallmatrix} & = \begin{bsmallmatrix}
        y_{12} x_1\\
        y_{13} x_1
    \end{bsmallmatrix} \\
    \begin{bsmallmatrix}
        y_1\\
        y_2\\
        0
    \end{bsmallmatrix} & = \begin{bsmallmatrix}
        y_3 x_2\\
        y_4 x_2\\
        y_5 x_2
    \end{bsmallmatrix} \\
    \begin{bsmallmatrix}
        y_3\\
        y_4\\
        y_5
    \end{bsmallmatrix} & = \begin{bsmallmatrix}
        y_6 x_3+y_7x_4\\
        y_8 x_3+y_9x_4\\
        y_{10} x_3+y_{11} x_4
    \end{bsmallmatrix} \\
    \begin{bsmallmatrix}
        y_{12}\\
        y_{13}\\
        0
    \end{bsmallmatrix} & = \begin{bsmallmatrix}
        y_6 x_5+y_7 x_6\\
        y_8 x_5+y_9 x_6 \\
        y_{10} x_5+y_{11} x_6
    \end{bsmallmatrix}
    \end{cases}.
\end{equation}

By imposing $x_3x_2=x_5x_1$ and $x_4x_2=x_6x_1$ (the relations that follow from $N$ being a representation of $(\Gamma,I)$) and operating two substitutions in \eqref{eq:11relationsexplicit} we notice that 3 out of those 11 relations can be derived from the other 8.
In particular, a set of independent, generating relations is given either by
\begin{equation*}
    \begin{cases}
        \begin{bsmallmatrix}
        y_3\\
        y_4\\
        y_5
    \end{bsmallmatrix} & = \begin{bsmallmatrix}
        y_6 x_3+y_7x_4\\
        y_8 x_3+y_9x_4\\
        y_{10} x_3+y_{11} x_4
    \end{bsmallmatrix} \\
    \begin{bsmallmatrix}
        y_{12}\\
        y_{13}\\
        0
    \end{bsmallmatrix} & = \begin{bsmallmatrix}
        y_6 x_5+y_7 x_6\\
        y_8 x_5+y_9 x_6 \\
        y_{10} x_5+y_{11} x_6
    \end{bsmallmatrix} \\
    \begin{bsmallmatrix}
        y_1\\
        y_2
    \end{bsmallmatrix} & = \begin{bsmallmatrix}
        y_{12} x_1\\
        y_{13} x_1
    \end{bsmallmatrix}
    \end{cases}
\end{equation*}

or by
\begin{equation*}
    \begin{cases}
        \begin{bsmallmatrix}
        y_3\\
        y_4\\
        y_5
    \end{bsmallmatrix} & = \begin{bsmallmatrix}
        y_6 x_3+y_7x_4\\
        y_8 x_3+y_9x_4\\
        y_{10} x_3+y_{11} x_4
    \end{bsmallmatrix} \\
    \begin{bsmallmatrix}
        y_{12}\\
        y_{13}\\
        0
    \end{bsmallmatrix} & = \begin{bsmallmatrix}
        y_6 x_5+y_7 x_6\\
        y_8 x_5+y_9 x_6 \\
        y_{10} x_5+y_{11} x_6
    \end{bsmallmatrix} \\
    \begin{bsmallmatrix}
        y_1\\
        y_2
    \end{bsmallmatrix} & = \begin{bsmallmatrix}
        y_3 x_2\\
        y_4 x_2
    \end{bsmallmatrix}
    \end{cases}.
\end{equation*}

We found - as expected - that the codimension of $\Hom^0({\bf e}^w,M^f)$ in $V$ is equal to the number of equations that define $\Hom({\bf e}^w,M^f)$ in $V$.
What motivates Conjecture \ref{conj:flatlocus} is the fact that the strategy and methods employed here do not depend on the specific choice of $f$. Therefore, we can expect them to provide the same result for any $f$ such that $\dim(\Gr_{{\bf e}^w}(M^f))=\dim(X_w)$ and in generic ambient dimension $n+1$.

\bibliography{bibliography.bib}
\bibliographystyle{amsplain}

\end{document}